\newtheorem{Pa}{Paper}[section]
\newtheorem{theorem}[Pa]{{\bf Theorem}}
\newtheorem{lemma}[Pa]{{\bf Lemma}}
\newtheorem{corollary}[Pa]{{\bf Corollary}}
\newtheorem{Rk}[Pa]{{\bf Remark}}
\newtheorem{proposition}[Pa]{{\bf Proposition}}
\newtheorem{Ex}[Pa]{{\bf Exercise}}
\numberwithin{equation}{section}
\newcommand{\sbm}[1]{\left[\begin{smallmatrix} #1
                \end{smallmatrix}\right]}
\newcommand{\bp}{\boldsymbol{\rho}}
\newcommand{\bgam}{\boldsymbol{\gamma}}
\newcommand{\balpha}{\boldsymbol{\alpha}}
\newcommand{\beps}{\boldsymbol{\varepsilon}}
\newcommand{\cZ}{{\mathcal {Z}}}
\newcommand{\cX}{{\mathcal {X}}}
\newcommand{\bH}{{\mathbb H}}
\newcommand{\C}{{\mathbb C}}
\newcommand{\D}{{\mathbb D}}
\newcommand{\R}{{\mathbb R}}
\newcommand{\bl}{\boldsymbol{e_\ell}}
\newcommand{\br}{\boldsymbol{e_r}}   
\newcommand{\B}{{\bf B}}
\begin{document}

\author[V. Bolotnikov]{Vladimir Bolotnikov}
\address{Department of Mathematics, the College of William and Mary, Williamsburg, VA 23187-8795, USA}
\email{vladi@math.wm.edu}
\title[Finite Blaschke products over quaternions]{Finite Blaschke products over quaternions: unitary realizations and zero structure}

\maketitle

\begin{abstract}
We consider power series over the skew field $\mathbb H$ of real quaternions which 
are analogous to finite Blaschke products in the classical complex setting.
Several intrinsic characteriztions of such series are given in terms of their coefficients
as well as in terms of their left and right values. We also discuss the zero structure of finite Blaschke products 
including left/right zeros and their various multiplicities. We show how to construct a finite Blaschke product 
with prescribed zero structure. In particular, given a quaternion polynomial $p$ with all zeros less then one in modulus, 
we explicitly construct a power series $R$ with quaternion coefficients with no zeros such that $pR$ is a finite Blaschke product.
\end{abstract}

\section{Introduction}
\setcounter{equation}{0}
Given a complex polynomial $p(z)=p_0+p_1z+\ldots +p_nz^n$ with all zeros $a_1,\ldots, a_n$ in the open unit disk, 
the rational function
\begin{equation}
f(z)=\frac{p(z)}{z^np(1/\overline z)}=\frac{p_0+p_1z+\ldots +p_nz^n}
{\overline{p}_n+\overline{p}_{n-1}z+\ldots+\overline{p}_0z^n}
\label{1}
\end{equation}
can be written in the form 
\begin{equation}
f(z)=c\cdot \prod_{i=1}^n\frac{z-a_i}{1-z\overline{a}_i},\qquad |c|=1, \; |a_i|<1,
\label{2}
\end{equation}
and is called a (finite) Blaschke product of degree $n$.
Thus, to construct a finite Blaschke product with prescribed zeros as a given polynomial $p$), we just may use formulas 
\eqref{2} and \eqref{1}. The situation is quite different in the case of Blaschke products over quaternions. 

\smallskip

Finite Blaschke products over quaternions were introduced and studied in \cite{acs} within the theory
of slice-regular functions as certain $\star$-products of slice regular quaternionic Blaschke factors. 
These factors can be identified with formal power series 
$$
{\bf b}_{\alpha}(z):=(z-\alpha)(1-z\overline{\alpha})^{-1}=-\alpha+\sum_{k=0}^\infty (1-|\alpha|^2)\overline{\alpha}^kz^{k+1}
$$
where $\alpha\in\mathbb H$ is a fixed quaternion with $|\alpha|<1$, and $z$ is a formal variable commuting
with quaternionic coefficients. The latter power series can be evaluated on the left and on the right at 
any $\gamma\in\mathbb H$ with $|\gamma|\le 1$, giving rise to left- and right-regular automorphisms
of the closed unit ball of $\mathbb H$. The standard power-series product of finitely many Blaschke factors 
\begin{equation}
B={\bf b}_{\alpha_1}\cdots {\bf b}_{\alpha_n}
\label{m}
\end{equation}
is a formal power series over $\mathbb H$ that also can be evaluated on the left and on the right at
any quaternionic point $\gamma$ with $|\gamma|\le 1$ and produces the left and right slice-regular finite Blaschke
products in the sense of \cite{acs}. 

\smallskip

Due to non-commutativity of multiplication, the factorization of a given finite Blaschke product $B$ is largely
non-unique. Besides, any concrete factorization \eqref{m} does not display much information about zeros of $B$.
As was pointed out in \cite{acs}, in complete analogy to the quaternionic polynomial case, a finite Blaschke product
of the form \eqref{m} may have infinitely many zeros as well as only one left and one right zero. The detailed analysis 
of the zero structure of quaternionic finite Blaschke products is presented in Section 3.2. below.

\smallskip

Although the representation \eqref{m} is not unique, one can always factor out (on the left or on the right) 
the polynomial containing all information about the left/right zero structure of $B$.
Namely, $B$ can be (uniquely) factored as $B=pR$ where $p$ is a monic polynomial of degree $n$ and 
$R$ is a power series having no zeros. The inverse question (to recover $B$ from a given $p$) will be discussed in 
Section 4. We will also discuss several intrinsic characterization of finite Blaschke products as formal power series
(i.e., in terms of their coefficients) or as slice-regular functions (i.e., in terms of their left and right values).

\smallskip

The outline of the paper is as follows. In Section 2 we recall basic facts on power series over quaternions, 
their left and right evaluations and zero structure in terms of spherical divisors. In Section 3, we specify 
spherical divisors to the case of finite Blaschke products and present two characterizations of finite Blaschke products (in terms of 
the boundary behavior and in terms of unitary realizations). Three other characterizations (as isometric multipliers of 
the Hardy space and in terms of coefficients) are given in Section 5. In Section 4 we present several constructions of a Blaschke product
with prescribed zero structure.

\section{Preliminaries}
In what follows, $\mathbb H$ denotes the skew  field of quaternions  
\begin{equation}
\alpha=x_0+{\bf i}x_1+{\bf j}x_2+{\bf k}x_3 \qquad (x_0,x_1,x_2,x_3\in\mathbb R),
\label{1.3}   
\end{equation}
with imaginary units ${\bf i}, {\bf j}, {\bf k}$ subject to equalities
${\bf i}^2={\bf j}^2={\bf k}^2={\bf ijk}=-1$ and commuting with reals. The real and imaginary parts of $\alpha$ in \eqref{1.3} are
$\Re \alpha=x_0$ and  $\Im \alpha ={\bf i}x_1+{\bf j} x_2+{\bf k}x_3$, respectively.
The conjugate of $\alpha$ and its modulus are defined as
$$
\overline{\alpha}=\Re \alpha-\Im \alpha,\quad
|\alpha|=\sqrt{\alpha\overline{\alpha}}=\sqrt{(\Re\alpha)^2+|\Im\alpha|^2}
=\sqrt{x_0^2+x_1^2+x_2^2+x_3^2}.
$$
With a non-real $\alpha\in\mathbb H$, we associate its centralizer 
\begin{equation}
\C_\alpha:=\{\beta\in\mathbb H: \, \alpha\beta=\beta\alpha\}={\rm span}_{\mathbb R}({\bf 1}, \, \alpha)
={\rm span}_{\mathbb R}\big({\bf 1}, \, \frac{\Im \alpha}{|\Im\alpha|}\big)
\label{6}
\end{equation}
and the similarity (conjugacy) class
\begin{equation}
[\alpha]:=\{h\alpha h^{-1}: \; h\in\mathbb H\backslash\{0\}\}=
\{\beta\in\mathbb H: \; \Re\alpha =\Re\beta \; \;
\mbox{and} \; \; |\alpha|=|\beta|\}.
\label{7}
\end{equation}
and observe that $\C_\alpha\cap[\alpha]=\{\alpha,\overline{\alpha}\}$.
The second equality in \eqref{6} characterizes $\C_\alpha$
as the two-dimensional real subspace of $\mathbb H$ spanned by ${\bf 1}$ and $\alpha$
(or by ${\bf 1}$ and the purely imaginary unit
$\frac{\Im \alpha}{|\Im\alpha|}$), while the second equality in \eqref{7} (established in \cite{brenner})
describes $[\alpha]$  as the $2$-sphere (of radius $|\Im\alpha|$ around $\Re\alpha$). 
It is worth noting that the orthogonal complement $\C_\alpha^\perp$ of $\C_\alpha$ (in the euclidean metric 
of $\mathbb H\cong \mathbb R^4$) is the two-dimensional real subspace that also can be characterized 
(see e.g., \cite[Lemma 4.3]{bol1}) as the set of all intertwiners of $\alpha$ and $\overline{\alpha}$:
\begin{equation}
\C_\alpha^\perp=\{\beta\in\mathbb H: \, \alpha\beta=\beta\overline{\alpha}\}.
\label{8a}
\end{equation}
Since $\R$ is the center of $\mathbb H$), we conclude from
characterizations \eqref{6} and \eqref{8a} that 
\begin{equation}
\beta\gamma=\gamma\overline{\beta}\in\C_\alpha^\perp\quad\mbox{for all}\quad \gamma\in\C_\alpha, \; \beta\in\C_\alpha^\perp.
\label{jan7}
\end{equation}
\begin{Rk}
Given any fixed non-real $\alpha$ and any fixed unit $\beps\in\C_{\alpha}^\perp$,
every quaternion $\beta$ can be uniquely represented as
\begin{equation}
\beta=\beta_1+\beta_2\beps\quad\mbox{with}\quad \beta_1,\beta_2\in\C_\alpha.
\label{jan4}
\end{equation}
The representation \eqref{jan4} is orthogonal in the euclidean metric of $\mathbb H\cong \mathbb R^4$.
\label{R:1.1}
\end{Rk}
\begin{proof}
Indeed, it follows from characterizations \eqref{6} and \eqref{8a} that multiplication by $\beps$ (on either side) is an isometry
from $\C_\alpha$ onto $\C_\alpha^\perp$. Thus, $\beta_1$ and $\beta_2\beps$ in \eqref{jan4} are just orthogonal projections
of $\beta$ onto $\C_\alpha$ and $\C_{\alpha}^\perp$, respectively.\qed
\end{proof}

\noindent
{\bf 2.1. Power series and point evaluations}. We let  $\mathbb H[[z]]$ be the ring of 
formal power series $f(z)=\sum_{k\ge 0}z^kf_k$
in one variable $z$ which commutes with quaternionic coefficients $f_k\in\mathbb H$.
The sum and the product of two power series are defined as
\begin{equation}
(f+g)(z)=\sum_{k=0}^\infty z^k(f_k+g_k)\quad\mbox{and}\quad
(fg)(z)=\sum_{k=0}^\infty z^k\bigg(\sum_{\ell=0}^k f_\ell g_{k-\ell}\bigg).
\label{8}
\end{equation}
Quaternionic conjugation $\alpha\mapsto \overline{\alpha}$ on $\bH$ extends to the anti-linear 
involution $f\mapsto f^\sharp$ on $\bH[[z]]$ by the formula
\begin{equation}
f^\sharp(z)=\bigg(\sum_{k=0}^\infty z^kf_k\bigg)^\sharp=\sum_{k=0}^\infty z^k \overline{f}_k,
\label{9}
\end{equation}
and it is readily seen that for all $f,g\in\bH[[z]]$,
\begin{equation}
ff^\sharp=f^\sharp f\in\R[[z]],\quad (fg)^\sharp=g^\sharp f^\sharp,\quad (fg)(fg)^\sharp=(ff^\sharp)(gg^\sharp).
\label{2.10}
\end{equation}
To interpret power series as analytic functions, we need point evaluations of power series (at least within a ball 
$\mathbb B_R=\{\alpha\in\mathbb H: \, |\alpha|<R\}$. To this end, we restrict ourselves to the ring
 $$ 
\mathcal H_R=\big\{f(z)=\sum_{j=0}^\infty f_jz^j: \; \limsup_{k\to\infty} \sqrt[k]{|f_k|}\le 1/R\big\}.
 $$ 
For any $f\in\mathcal H_R$ and $\alpha\in \mathbb B_R$, we may define $f^{\bl}(\alpha)$ and $f^{\br}(\alpha)$ 
(left and right evaluations of $f$ at $\alpha$) by
 \begin{equation} 
f^{\bl}(\alpha)=\sum_{k=0}^\infty\alpha^k f_k,\quad f^{\br}(\alpha)=\sum_{k=0}^\infty 
f_k\alpha^k,\quad\mbox{if}\quad f(z)=\sum_{k=0}^\infty z^k f_k;
\label{15} 
\end{equation} 
The latter formulas make sense, since the condition 
$\;{\displaystyle\limsup_{k\to\infty}}\sqrt[k]{|f_k|}\le 1/R$ imposed on the coefficients guarantees
 the absolute convergence of the series in \eqref{15} for all $\alpha\in\mathbb B_R$. Since the 
multiplication in $\mathbb H$ is not commutative, left and right evaluations produce different results; however, 
equality 
\begin{equation} f^{\br}(\alpha)=\overline{f^{\sharp\bl}(\overline{\alpha})} 
\label{19} 
\end{equation}
holds for any $\alpha\in\mathbb H$ as is readily seen from \eqref{9} and \eqref{15} and enables us to translate
results concerning left evaluation to the right evaluation setting and vice versa. 

\smallskip

Let us note that if all the coefficients $f_k$ in \eqref{15} belong to the same plane $\C_\alpha$ (for some nonreal $\alpha$),
then left and right evaluations produce the same outcomes at each $\beta\in\C_\alpha$: $f^{\bl}(\beta)=f^{\br}(\beta)=f(\beta)\in\C_\alpha$.
In this particular case, the restriction of $f$ to the disk $\C_{\alpha}\bigcap P_R$ can be identified with a complex analytic function.
The general case reduces to the latter particular one upon making use of Remark \ref{R:1.1} as follows.
\begin{lemma}
Given an $f\in\mathcal H_R$, for any non-real $\alpha\in\mathbb B_R$ and any unit $\beps\in\C_\alpha^\perp$,
there exist two power series $g,h\in\C_{\alpha}[[z]]\cap \mathbb B_R$ such that $f=g+h\beps$. Furthermore, 
\begin{equation}
f^{\bl}(\beta)=g(\beta)+h(\beta)\beps, \quad f^{\br}(\beta)=g(\beta)+h(\overline{\beta})\beps\quad\mbox{for all} \; \; \beta\in\C_\alpha.
\label{jan5}
\end{equation}
\label{L:j1}
\end{lemma}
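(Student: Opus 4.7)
The plan is to apply the orthogonal decomposition of Remark~\ref{R:1.1} coefficientwise. For each $k\ge 0$, write
$$
f_k=g_k+h_k\beps\qquad \text{with}\quad g_k,\,h_k\in\C_\alpha,
$$
and take $g(z)=\sum_{k\ge 0}z^kg_k$ and $h(z)=\sum_{k\ge 0}z^kh_k$. Then $g,h\in\C_\alpha[[z]]$ and $f=g+h\beps$ by construction, so only the convergence (to verify $g,h\in\mathcal H_R$) and the two evaluation formulas remain.

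For the convergence, the decomposition in Remark~\ref{R:1.1} is orthogonal in the Euclidean metric of $\bH\cong\R^4$ and multiplication by the unit $\beps$ is an isometry. Hence $|g_k|^2+|h_k|^2=|f_k|^2$, giving $\max(|g_k|,|h_k|)\le |f_k|$, and therefore both $\limsup_k\sqrt[k]{|g_k|}$ and $\limsup_k\sqrt[k]{|h_k|}$ are bounded by $1/R$. This places $g$ and $h$ in $\mathcal H_R\cap\C_\alpha[[z]]$ and legitimizes their left/right evaluations at any $\beta\in\C_\alpha\cap\mathbb B_R$.

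For the left evaluation at such a $\beta$, all the quantities $\beta^k,g_k,h_k$ lie in the commutative plane $\C_\alpha$, so absolutely convergent termwise manipulation gives
$$
f^{\bl}(\beta)=\sum_{k=0}^\infty\beta^k(g_k+h_k\beps)
=\sum_{k=0}^\infty\beta^k g_k+\Bigl(\sum_{k=0}^\infty\beta^k h_k\Bigr)\beps
=g(\beta)+h(\beta)\beps.
$$
For the right evaluation, the only new ingredient is the intertwining relation \eqref{jan7}, which yields $\beps\beta^k=\overline{\beta}^k\beps$ since $\beta^k\in\C_\alpha$. Combining this with the commutativity of $g_k$ and $\beta^k$ within $\C_\alpha$,
$$
f^{\br}(\beta)=\sum_{k=0}^\infty (g_k+h_k\beps)\beta^k
=\sum_{k=0}^\infty g_k\beta^k+\sum_{k=0}^\infty h_k\,\overline{\beta}^k\beps
=g(\beta)+h(\overline{\beta})\beps.
$$

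I do not anticipate any real obstacle: the lemma is essentially a bookkeeping statement that makes the orthogonal splitting of Remark~\ref{R:1.1} compatible with both evaluation homomorphisms, the only non-routine point being the application of \eqref{jan7} to push $\beps$ past $\beta^k$ in the right evaluation (which flips $\beta$ to $\overline{\beta}$ and accounts for the asymmetry between the two formulas in \eqref{jan5}).
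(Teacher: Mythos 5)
Your proof is correct and follows essentially the same route as the paper: decompose each coefficient $f_k=g_k+h_k\beps$ via Remark \ref{R:1.1}, use orthogonality to get $\max\{|g_k|,|h_k|\}\le|f_k|$ and hence $g,h\in\mathcal H_R$. You go slightly further than the paper's proof by explicitly verifying the evaluation formulas \eqref{jan5} (the paper leaves this to the reader), and your use of the intertwining relation $\beps\beta^k=\overline{\beta}^k\beps$ coming from \eqref{jan7}/\eqref{8a} is exactly the right justification for the conjugation appearing in the right-evaluation formula.
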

\begin{proof}
We represent each coefficient $f_k$ of $f(z)=\sum_{k\ge 0}f_kz^k$ as in \eqref{jan4}:
\begin{equation}
f_k=g_k+h_k\beps\quad\mbox{with}\quad g_k,h_k\in\C_\alpha\quad\mbox{for all}\; \; k\ge 0.
\label{jan9}
\end{equation}
Then the power series $g=\sum g_kz^k$ and $h=\sum h_kz^k$ provide the desired representation:
they belong to $\C_\alpha[[z]]$ (by construction); on the other hand, since the representations \eqref{jan9} are orthogonal, we have 
$$
|f_k|=\sqrt{|g_k|^2+|h_k\beps|^2}=\sqrt{|g_k|^2+|h_k|^2}\ge \max\{|g_k|,|h_k|\}
$$
and consequently, the series $g$ and $h$ belong to $\mathcal H_R$, since $f$ does.\qed
\end{proof}
The latter statement (which appeared in \cite{genstr} in the setting of slice-regular functions) provides an efficient way to extend 
certain results from complex function theory to various hypercomplex settings. One such extension 
(the quaternionic maximum modulus principle which also was pointed out in \cite{genstr}) is recalled below.
\begin{lemma}
Let $f\in\mathcal H_R$ and $\alpha\in\mathbb R$ be given. If $|f^{\bl}(\alpha)|\ge |f^{\bl}(\gamma)|$
(or if $|f^{\br}(\alpha)|\ge |f^{\br}(\gamma)|$) for all $\gamma$ in a neighborhood $\Delta$ of $\alpha$, then $f=f_0$. 
\label{L:im3}
\end{lemma}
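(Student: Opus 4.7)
The plan is to reduce to the classical maximum modulus principle for complex holomorphic functions on a disk, using the orthogonal splitting of Lemma \ref{L:j1}. First, pick a non-real quaternion $\alpha_0$ with $\alpha\in\C_{\alpha_0}$: take $\alpha_0=\alpha$ if $\alpha$ is non-real, and any non-real $\alpha_0$ (say $\alpha_0={\bf i}$) otherwise, noting that $\R\subset\C_{\alpha_0}$ for every non-real $\alpha_0$. Fix a unit $\beps\in\C_{\alpha_0}^\perp$ and use Lemma \ref{L:j1} to write $f=g+h\beps$ with $g,h\in\C_{\alpha_0}[[z]]\cap\mathcal H_R$. For every $\gamma\in\C_{\alpha_0}\cap\mathbb B_R$, the expression $f^{\bl}(\gamma)=g(\gamma)+h(\gamma)\beps$ is an orthogonal decomposition in $\mathbb H\cong\R^4$, since $g(\gamma),h(\gamma)\in\C_{\alpha_0}$ while $h(\gamma)\beps\in\C_{\alpha_0}^\perp$, and hence
$$
|f^{\bl}(\gamma)|^2=|g(\gamma)|^2+|h(\gamma)|^2.
$$

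Identifying $\C_{\alpha_0}$ with $\C$, the series $g$ and $h$ become ordinary holomorphic functions on the open disk $U:=\C_{\alpha_0}\cap\mathbb B_R$, and the hypothesis says the subharmonic function $|g|^2+|h|^2$ attains its maximum at $\alpha$, which is an interior point of the open set $U\cap\Delta\subseteq U$. The classical maximum principle forces $|g|^2+|h|^2$ to be constant on a neighborhood of $\alpha$ in $U$; applying $\partial_z\partial_{\bar z}$ yields $|g'|^2+|h'|^2\equiv 0$ there, so $g'\equiv h'\equiv 0$ on that neighborhood, and the identity theorem on the connected disk $U$ promotes this to $g'\equiv h'\equiv 0$ throughout $U$. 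At the level of power series this means $g_k=h_k=0$ for all $k\ge 1$, and therefore $f_k=g_k+h_k\beps=0$ for $k\ge 1$, i.e.\ $f=f_0$.

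The right-evaluation case runs along identical lines, using the second identity in \eqref{jan5} together with the fact that $\gamma\mapsto h(\overline{\gamma})$ is antiholomorphic (so $|h(\overline{\gamma})|^2$ is again subharmonic); alternatively, one may invoke \eqref{19} to reduce the right case to the left case directly. The only mildly delicate point in the argument is the need to handle real and non-real $\alpha$ uniformly---Lemma \ref{L:j1} does not apply when $\C_\alpha=\mathbb H$---and this is precisely what motivates the auxiliary choice of $\alpha_0$ at the outset. Everything else is a standard application of the Lemma \ref{L:j1} splitting to transport a well-known complex-analytic statement.
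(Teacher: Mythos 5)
Your proof is correct, and it follows the same basic strategy as the paper: restrict $f$ to a two-dimensional slice $\C_{\alpha_0}\cap\mathbb B_R$ via the orthogonal splitting $f=g+h\beps$ of Lemma \ref{L:j1} and then invoke a complex-analytic maximum principle. The execution of the final step differs slightly. The paper first multiplies $f$ on the right by a unimodular constant so that $f^{\bl}(\alpha)>0$; this forces $h(\alpha)=0$, whence $|g(\alpha)|\ge|g(\beta)|$ on the slice, and the ordinary maximum modulus principle applied to $g$ alone (followed by $h\equiv 0$ from the leftover inequality $|h(\beta)|^2\le |g(\alpha)|^2-|g(\beta)|^2$) finishes the argument. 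You instead observe that $|f^{\bl}|^2=|g|^2+|h|^2$ is subharmonic on the slice and apply the strong maximum principle for subharmonic functions; this treats the two components symmetrically and avoids the normalization, at the cost of invoking subharmonicity rather than only the maximum modulus principle for holomorphic functions. Both routes are sound. Your explicit handling of real $\alpha$ (choosing an auxiliary non-real $\alpha_0$ with $\alpha\in\C_{\alpha_0}$, since Lemma \ref{L:j1} is stated only for non-real base points) is a welcome addition: the paper's proof discusses only the non-real case, even though the lemma as stated (with $\alpha\in\R$, apparently a typo for $\alpha\in\mathbb B_R$) makes the real case the relevant one.
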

\begin{proof}
Upon multiplying $f$ by a suitable unimodular constant on the right, we may assume without loss of generality that
$f^{\bl}(\alpha)>0$. If $\alpha_0$ is not real, consider the orthogonal representation $f=g+h\beps$ from Lemma \ref{L:j1}.
Since $f^{\bl}(\alpha)>0$, we have $h(\alpha)=0$. Then for any $\beta\in\C_\alpha\bigcap \Delta$, we have
\begin{equation}
|g(\alpha)|^2=|f^{\bl}(\alpha)|^2\ge |f^{\bl}(\beta)|^2=|g(\beta)|^2+|h(\beta)|^2\ge |g(\beta)|^2
\label{jan10}
\end{equation}
and thus, the maximum modulus principle for complex analytic functions applies to the restriction of $g$ to $\C_\alpha\bigcap \mathbb B_R$ 
forcing $g(\beta)=const=f^{\bl}(\alpha)$ for all $\beta\in\C_\alpha\bigcap \mathbb B_R$. Then it follows again from \eqref{jan10}
that $h(\beta)=0$ for all $\beta\in\C_\alpha\bigcap \mathbb B_R$ which in view of the first formula in \eqref{jan5} implies 
$f^{\bl}(\beta)$ does not depend on $\beta\in\C_\alpha\bigcap \mathbb B_R$ and therefore, $f^{\bl}(\beta)=f^{\bl}(0)=f_0$. The right version 
of the statement is justified similarly.\qed
\end{proof}
We finally recall that evaluation functionals $\bl$ and $\br$ are not multiplicative, in general. 
As is readily seen from \eqref{8} and \eqref{15}, for power series $f(z)$ and $g(z)$ as in \eqref{15}, 
\begin{equation}
(fg)^{\bl}(\alpha)=\sum_{k=0}^\infty\alpha^kf^{\bl}(\alpha)g_k\quad\mbox{and}\quad
 (fg)^{\br}(\alpha)=\sum_{k=0}^\infty f_kg^{\br}(\alpha)\alpha^k,
\label{16}
\end{equation}
from which it follows that 
\begin{align} 
(fg)^{\bl}(\alpha)
&=\left\{\begin{array}{ccc}
f^{\bl}(\alpha)\cdot g^{\bl}\left(f^{\bl}(\alpha)^{-1}\alpha
f^{\bl}(\alpha)\right)&\mbox{if} &
f^{\bl}(\alpha)\neq 0, \\
0 & \mbox{if} & f^{\bl}(\alpha)= 0,\end{array}\right.\label{16a}\\
(fg)^{\br}(\alpha)&=\left\{\begin{array}{ccc}
f^{\br}\left(g^{\br}(\alpha)\alpha
g^{\br}(\alpha)^{-1}\right)\cdot g^{\br}(\alpha)&\mbox{if} &
g^{\br}(\alpha)\neq 0, \\
0 & \mbox{if} & g^{\br}(\alpha)= 0.\end{array}\right.\label{16b}
\end{align}
\begin{Rk}
{\rm Since $x\beta=\beta x$ for all $x\in\R$ and $\beta\in\mathbb H$,
one can see from \eqref{15} that $f^{\bl}(x)=f^{\br}(x):=f(x)$ for every $x\in\R$. 
Besides, for every $x\in\R$, we have $f(x)^{-1}x f^{\bl}(x)=x=f^{\bl}(x)xf(x)^{-1}$ which on account of \eqref{16a}, \eqref{16b}
tell us that evaluation functionals at real points {\em are multiplicative}.}
\label{R:im7}
\end{Rk}
{\bf 2.2. Left and right zeros}. An element $\alpha\in \mathbb B_R$ is called a {\em left zero} of 
$f\in \mathcal H_R$ if $f^{\bl}(\alpha)=0$ or equivalently, if $f$ can be factored as 
$$
f=\bp_\alpha g\quad\mbox{for}\quad \bp_{\alpha}(z)=z-\alpha\quad\mbox{and some}\quad g\in\mathcal H_R.
$$
Right zeros are defined similarly. Since $f^{\bl}(x)=f^{\br}(x)$ for every $x\in\R$, all real zeros of $f$ are simultaneously 
left and right zeros. On the other hand, if $f\in\R[[z]]$, then 
$f^{\bl}(\alpha)=f^{\br}(\alpha)$ for every $\alpha\in\bH$, and hence, for power series $f$ with real coefficients,
the sets of left and right zeros coincide. Also, if $f\in\R[[z]]$, then for each $\alpha\in\bH$ and $h\neq 0$,
we have $f(h^{-1}\alpha h)=h^{-1}f(\alpha)h$ so that the zero set of $f$ contains, along with each $\alpha$, 
the whole similarity class $[\alpha]$. For example, given any non-real $\alpha$, the zero set of the real polynomial
\begin{equation}
\cX_{[\alpha]}(z)=(z-\alpha)(z-\overline{\alpha})=z^2-2z\cdot \Re(\alpha)+|\alpha|^2
\label{2.5}
\end{equation}
is the whole sphere $[\alpha]$. Observe that by the characterization \eqref{7},
the element $\alpha$ in \eqref{2.5} can be replaced by any other element in $[\alpha]$.
The polynomial \eqref{2.5} turns out to be equal to the least left (and right) common multiple
{\bf llcm} ({\bf lrcm}) of the polynomials $\bp_\beta$ and $\bp_\gamma$ for any two distinct elements $\beta,\gamma\in[\alpha]$.
Therefore, if $f\in\mathcal H_R$ has two distinct left (or right) zeros within the same similarity class
$V\subset \mathbb B_R$, then it can be factored  as $f=\cX_{V}g=g\cX_{V}$ for some $g\in\mathcal H_R$ and hence,
every element in $V$ turns out to be left and right zero of $f$. In this case, we say that $V$ is a 
{\em spherical zero} of $f$.

\smallskip

We next observe that given $f\in\mathcal H_R$, the zero set of the power series $ff^\sharp=f^\sharp f\in \R[[z]]$ is the union
of at most countably many similarity classes: $\cZ(ff^\sharp)=\bigcup_kV_k$, and it follows from evaluation formulas
\eqref{16a}, \eqref{16b} that all left and all right zeros of $f$ are contained in this union. If $V_k$ is not a spherical zero of $f$,
it contains at most one left and one right zeros of $f$. A remarkable fact discovered by Niven \cite{niven} is that 
it does contain them. Indeed if $V_k=\{x\}\subset\R$ (i.e., if 
$x$ is a real zero of $ff^\sharp$, then $x$ also is a real zero of $f$. The nonreal case is covered below.
\begin{theorem}
Given $f\in\mathcal H_R$, let $\alpha$ and $\overline{\alpha}$ be complex roots
(or any quaternion-conjugate roots) of the real power series $ff^\sharp$.
\begin{enumerate}
\item If $f^{\bl}(\alpha)=0\neq f^{\br}(\overline{\alpha})$, then the only right zero of $f$ in $[\alpha]$
is $(f^{\bl}(\overline{\alpha}))^{-1}\alpha f^{\bl}(\overline{\alpha})$.
\item If $f^{\bl}(\alpha)\neq 0$, then the only left root $\gamma_\ell$ and the only right root $\gamma_r$ of
$f$ in the similarity class $[\alpha]$ are given by
\begin{equation}
\begin{array}{l}
\gamma_\ell=(\overline{\alpha}f^{\bl}(\alpha)+\alpha f^{\bl}(\overline{\alpha}))(f^{\bl}(\alpha)+
f^{\bl}(\overline{\alpha}))^{-1},\\
\gamma_r=(f^{\bl}(\alpha)-f^{\bl}(\overline{\alpha}))^{-1}(\overline{\alpha}f^{\bl}(\alpha)-\alpha
f^{\bl}(\overline{\alpha})).\end{array}
\label{3.1}
\end{equation}
\end{enumerate}
\label{T:2.1}
\end{theorem}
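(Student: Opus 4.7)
The plan rests on reducing both evaluations on $V:=[\alpha]$ to a two-parameter linear problem controlled by the real polynomial $\cX_V(z)=z^2-2\Re(\alpha)z+|\alpha|^2$. Euclidean division in $\R[z]$ yields real scalars $a_n,b_n$ and polynomials $q_n$ with $z^n=q_n(z)\cX_V(z)+a_nz+b_n$, so $\beta^n=a_n\beta+b_n$ for every $\beta\in V$. Setting $A=\sum_k a_k f_k$ and $B=\sum_k b_k f_k$, both evaluations \eqref{15} collapse on $V$ to the affine expressions
\begin{equation}
f^{\bl}(\beta)=\beta A+B,\qquad f^{\br}(\beta)=A\beta+B\qquad(\beta\in V),
\label{plan:eval}
\end{equation}
and substituting $\beta=\alpha,\overline{\alpha}$ expresses $A,B$ through $L:=f^{\bl}(\alpha)$ and $M:=f^{\bl}(\overline{\alpha})$ via $A=(\alpha-\overline{\alpha})^{-1}(L-M)$ and $B=L-\alpha A$. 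Left and right zeros of $f$ inside $V$ are now precisely solutions in $V$ of $\beta A+B=0$ and $A\beta+B=0$; whenever $A$ is invertible these yield the single candidates $\gamma_\ell=-BA^{-1}$ and $\gamma_r=-A^{-1}B$.

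Part (1) is direct: $L=0$ forces $B=-\alpha A$ and $M=(\overline{\alpha}-\alpha)A$, while $f^{\br}(\overline{\alpha})=A\overline{\alpha}-\alpha A\neq 0$ prevents $A=0$; hence $\gamma_r=A^{-1}\alpha A$, and substituting $A=-(\alpha-\overline{\alpha})^{-1}M$ together with the commutation of $\alpha$ and $\alpha-\overline{\alpha}$ inside $\C_\alpha$ collapses this to $M^{-1}\alpha M$, which is the advertised formula.

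Part (2) hinges on extracting both $A\neq 0$ and $\gamma_\ell,\gamma_r\in V$ from the hypothesis $(ff^\sharp)(\alpha)=0$. Since $ff^\sharp\in\R[[z]]$ its left evaluation at $\alpha$ equals its ordinary value and hence vanishes; combined with $L\neq 0$, the product rule \eqref{16a} forces $f^{\sharp\bl}(L^{-1}\alpha L)=0$. Applying \eqref{plan:eval} to $f^\sharp$, whose analogues of $A,B$ are $\overline{A},\overline{B}$, yields $L^{-1}\alpha L=-\overline{B}\,\overline{A}^{-1}$. This identity rules out $\overline{A}=0$ (otherwise $\overline{B}=0$ and $L=\alpha A+B=0$), so $A\neq 0$; and reading off the modulus and real part of its left-hand side, which sits in $V$, gives $|B|=|\alpha|\,|A|$ and $\Re(\overline{B}\,\overline{A}^{-1})=-\Re(\alpha)$. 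Quaternionic conjugation together with the cyclic identity $\Re(XY)=\Re(YX)$ transfers these to $-A^{-1}B$ and $-BA^{-1}$, certifying $\gamma_r,\gamma_\ell\in V$.

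It remains to match the closed forms. Direct manipulation of $-A^{-1}B$ using the formulas for $A,B$ reproduces $(L-M)^{-1}(\overline{\alpha}L-\alpha M)$ with no additional input. For the left zero, the expansions $\overline{\alpha}L+\alpha M=2|\alpha|^2 A+2\Re(\alpha)B$ and $L+M=2\Re(\alpha)A+2B$ reduce the desired identity $\gamma_\ell=(\overline{\alpha}L+\alpha M)(L+M)^{-1}$ to the single relation $BA^{-1}B+2\Re(\alpha)B+|\alpha|^2 A=0$, which is precisely $\cX_V(-BA^{-1})=0$ right-multiplied by $A$—already secured by $\gamma_\ell\in V$. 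Invertibility of $L+M$ is then automatic: $L+M=0$ would give $B=-\Re(\alpha)A$, degenerating the same identity to $|\Im\alpha|^2 A=0$ and contradicting $A\neq 0$ together with $\alpha\notin\R$. The main obstacle is exactly this last identification: the two closed-form expressions for $\gamma_\ell$ are \emph{not} equal by formal manipulation alone, and the equality only becomes visible after one has exploited $\cX_V(\gamma_\ell)=0$, which is why the order of the argument (membership first, closed form second) is essential.
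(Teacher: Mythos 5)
Your argument is correct and complete in all essentials. Note first that the paper does not actually prove Theorem \ref{T:2.1}: it defers to \cite[Section 3]{bolapp}, so there is no in-paper proof to compare against. Your central device --- reducing both evaluations on $[\alpha]$ to the affine forms $\beta A+B$ and $A\beta+B$ by dividing out $\cX_{[\alpha]}$ --- is exactly the device the paper itself deploys later, in the proof of Theorem \ref{T:8.2}, to establish \eqref{127} (there $f=\cX_{[\gamma]}g+cz+d$ and the four values at $\gamma,\overline\gamma$ are read off the linear remainder), so your route is entirely in the paper's own spirit even though reconstructed independently. The logical ordering you insist on is indeed the crux: one must first certify $-BA^{-1},\,-A^{-1}B\in[\alpha]$ (via the product rule \eqref{16a} applied to $ff^\sharp$, which also yields $A\neq 0$), and only then use $\cX_{[\alpha]}(-BA^{-1})=0$ to convert $-BA^{-1}$ into the closed form $(\overline{\alpha}L+\alpha M)(L+M)^{-1}$; your handling of this, and of the invertibility of $L\pm M$, is sound, as is the collapse of $A^{-1}\alpha A$ to $M^{-1}\alpha M$ in part (1). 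Two small points you should make explicit: (i) the remainder coefficients satisfy $a_k=(\alpha^k-\overline{\alpha}^k)(\alpha-\overline{\alpha})^{-1}$ and $b_k=\alpha^k-a_k\alpha$, hence $|a_k|,|b_k|\le C|\alpha|^k$, so the series defining $A$ and $B$ converge because $f\in\mathcal H_R$ and $|\alpha|<R$; (ii) the whole argument presumes $\alpha\notin\R$ (needed for $(\alpha-\overline{\alpha})^{-1}$ and for the final contradiction $|\Im\alpha|^2A=0$), which is implicit in the theorem's hypothesis that $\alpha$ and $\overline{\alpha}$ are distinct conjugate roots.
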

Explicit formulas \eqref{3.1} above were obtained in \cite[Section 3]{bolapp}. In the polynomial setting, the 
idea to represent left/right zeros of $p\in\mathbb H[z]$ by means of complex roots of the real polynomial
$pp^\sharp$ goes back to Niven \cite{niven} and was used there to show 
any nonconstant polynomial over $\mathbb H$ has left and right zeros 
and consequently, that any (monic) polynomial $p\in\mathbb H[z]$ can be factored as
\begin{equation}
p(z)=(z-\alpha_1)(z-\alpha_2)\cdots(z-\alpha_n),\quad \alpha_1,\ldots,\alpha_n\in\bH.
\label{1.10}
\end{equation}
Following the terminology of \cite{ore}, let us say that a (monic) polynomial $p\in\mathbb H[z]$ 
is {\em indecomposable} if it cannot be represented as the {\bf lrcm} 
of its proper left divisors (equivalently, $p$  cannot be represented as the {\bf llcm} 
of its proper right divisors). Indecomposable polynomials can be conveniently characterized in terms of factorizations \eqref{1.10}.
To this end, let us say that a finite ordered collection ${\balpha}=(\alpha_1,\ldots,\alpha_n)\subset\bH$ is a {\em
spherical chain} if all its elements belong to the same  similarity class (sphere) and no two
consecutive elements are quaternion-conjugates of each other:
\begin{equation}
\alpha_1\sim \alpha_2\sim\ldots\sim\alpha_n\quad\mbox{and}\quad \alpha_{j+1}\neq
\overline{\alpha}_j\quad\mbox{for}\quad j=1,\ldots,n-1.
\label{2.6}
\end{equation}
\begin{theorem}
Let $p\in\bH[z]$ be a monic polynomial factored as in \eqref{1.10}. 
The following are equivalent:
\begin{enumerate}
\item $p$ is indecomposable.
\item $\balpha=(\alpha_1,\ldots,\alpha_n)$ is a spherical chain.
\item $\alpha_1$ is the only left zero of $p$.
\item $\alpha_n$ is the only right zero of $p$.
\item \eqref{1.10} is a unique factorization of $f$ into the product of linear factors.
\end{enumerate}
\label{T:5.1}
\end{theorem}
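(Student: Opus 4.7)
The plan is to prove the equivalences via the cycle $(3)\Leftrightarrow(2)$, $(2)\Leftrightarrow(4)$ by $\sharp$-symmetry, $(2)\Leftrightarrow(5)$, and $(1)\Leftrightarrow(3)$ through a direct argument on $\mathbf{lrcm}/\mathbf{llcm}$. The main tools will be Theorem \ref{T:2.1}, the identity $pp^\sharp=\prod_{i=1}^n\cX_{[\alpha_i]}$ that follows from the third identity in \eqref{2.10}, and the observation that $\cX_V\in\R[z]$ lies in the center of $\bH[z]$, so that real quadratic factors can be freely repositioned within any factorization.

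For $(3)\Rightarrow(2)$, each similarity class $[\alpha_i]$ appears as a zero-class of $pp^\sharp$, so Theorem \ref{T:2.1} produces a left zero of $p$ in every such class; the uniqueness hypothesis then forces $[\alpha_i]=[\alpha_1]=:V$ for all $i$. If additionally $\alpha_{j+1}=\overline{\alpha}_j$ for some $j$, then $(z-\alpha_j)(z-\alpha_{j+1})=\cX_V$ is real and hence central, so $\cX_V$ left-divides $p$; this makes $V$ a spherical zero of $p$, yielding additional left zeros and contradicting (3). For the reverse $(2)\Rightarrow(3)$ I would induct on $n$: the element $\alpha_1$ is a left zero of $p$ by \eqref{16a}, and any second left zero $\beta$ lies in $V$ (because $pp^\sharp=\cX_V^n$), so again $V$ is a spherical zero and $p=\cX_V r=(z-\alpha_1)(z-\overline{\alpha}_1)r$; left-cancelling $(z-\alpha_1)$ shows $\overline{\alpha}_1$ is a left zero of the shorter spherical chain $(\alpha_2,\ldots,\alpha_n)$, whose unique left zero by induction is $\alpha_2$, forcing $\overline{\alpha}_1=\alpha_2$ in violation of \eqref{2.6}.

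Then $(2)\Leftrightarrow(4)$ follows by applying the just-established $(2)\Leftrightarrow(3)$ to $p^\sharp(z)=(z-\overline{\alpha}_n)\cdots(z-\overline{\alpha}_1)$, using \eqref{19} to translate right zeros of $p$ into left zeros of $p^\sharp$ and noting that the spherical chain property is invariant under the reversal-and-conjugation $(\alpha_1,\ldots,\alpha_n)\mapsto(\overline{\alpha}_n,\ldots,\overline{\alpha}_1)$. For $(2)\Rightarrow(5)$, in any alternative factorization $p=(z-\beta_1)\cdots(z-\beta_n)$ the leading $\beta_1$ is a left zero of $p$ and hence equals $\alpha_1$ by (3); cancellation and induction on the spherical-chain tail finish the argument. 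For $(5)\Rightarrow(2)$ I argue the contrapositive by exhibiting a second factorization of the two-term block $q:=(z-\alpha_j)(z-\alpha_{j+1})$ whenever \eqref{2.6} fails at index $j$: if $\alpha_{j+1}=\overline{\alpha}_j$ then $q=\cX_V$ admits the factorization $(z-\beta)(z-\overline{\beta})$ for any $\beta\in V\setminus\{\alpha_j\}$; if instead $[\alpha_j]\neq[\alpha_{j+1}]$, then Theorem \ref{T:2.1} applied to $qq^\sharp=\cX_{[\alpha_j]}\cX_{[\alpha_{j+1}]}$ produces a left zero $\gamma\in[\alpha_{j+1}]$ of $q$, which is automatically distinct from $\alpha_j$ because the two lie in different similarity classes.

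The equivalence $(1)\Leftrightarrow(3)$ is handled directly via divisors. If (3) fails with a second left zero $\beta_1\neq\alpha_1$, write $p=(z-\alpha_1)q_1=(z-\beta_1)q_2$ with distinct monic $q_1,q_2$ of degree $n-1$; a short leading-coefficient argument comparing an arbitrary common left multiple of the $q_i$ to $p$ shows $\mathbf{llcm}(q_1,q_2)=p$, exhibiting $p$ as a decomposition. Conversely, if $p=\mathbf{lrcm}(d_1,d_2)$ with proper left divisors, then each $d_i=(z-\alpha_1)e_i$ (since the left zero of $d_i$ is a left zero of $p$), and the identity $\mathbf{lrcm}((z-\alpha_1)e_1,(z-\alpha_1)e_2)=(z-\alpha_1)\mathbf{lrcm}(e_1,e_2)$ reduces the problem to the indecomposability of $(z-\alpha_2)\cdots(z-\alpha_n)$, which is a spherical chain of length $n-1$ and is indecomposable by induction---forcing $e_1$ or $e_2$ to equal the tail and contradicting the properness of $d_1,d_2$. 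The main obstacle will be the mixed-class case of $(5)\Rightarrow(2)$: precisely verifying, via the explicit formulas of Theorem \ref{T:2.1}, that the alternative linear factor $\gamma$ genuinely lies in $[\alpha_{j+1}]\setminus\{\alpha_j\}$, together with the routine bookkeeping of the edge case $\alpha_1\in\R$, in which the spherical chain condition forces $n=1$.
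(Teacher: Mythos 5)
Your proposal is essentially correct, but note first that the paper does not actually prove Theorem \ref{T:5.1}: it defers the full argument to \cite[Section 4]{bolapp} and only demonstrates $(2)\Rightarrow(3)$, and it does so by a different device than yours. For a spherical chain, repeated application of \eqref{16a} telescopes (using $(\gamma-\alpha)^{-1}\gamma(\gamma-\alpha)=\overline{\alpha}$ for $\gamma\sim\alpha$) into the closed formula
$$
p^{\bl}(\gamma)=(\gamma-\alpha_{1})(\overline{\alpha}_1-\alpha_{2})(\overline{\alpha}_2-\alpha_3)\cdots(\overline{\alpha}_{n-1}-\alpha_{n}),\qquad \gamma\in[\alpha_1],
$$
whose trailing factors are nonzero precisely by \eqref{2.6}; combined with $pp^\sharp=\cX_{[\alpha_1]}^n$ this gives $(3)$ at once, and it also yields $(2)\Rightarrow(5)$ essentially for free, since the leading node of any linear factorization is a left zero. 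Your replacement for $(2)\Rightarrow(3)$ --- induction on $n$ via the spherical-zero dichotomy and left cancellation in the domain $\bH[z]$ --- is correct and arguably more conceptual, but it forgoes this explicit formula. The rest of your scheme (Niven's theorem/Theorem \ref{T:2.1} to place a left zero in every zero class of $pp^\sharp$ for $(3)\Rightarrow(2)$; the $\sharp$-reversal via \eqref{19} and \eqref{2.10} for $(4)$; refactoring a two-term block for $\neg(2)\Rightarrow\neg(5)$; and the degree count on ${\bf llcm}(q_1,q_2)$ together with the identity ${\bf lrcm}(\bp_{\alpha_1}e_1,\bp_{\alpha_1}e_2)=\bp_{\alpha_1}\,{\bf lrcm}(e_1,e_2)$ for $(1)$) is sound and self-contained relative to the facts stated in Sections 2.2--2.3.

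One caveat that you half-acknowledge but do not resolve: if the common similarity class is a real singleton, the equivalence fails as literally stated. The polynomial $p=\bp_x^{\,n}$ with $x\in\R$ and $n\ge 2$ has $x$ as its only left (and right) zero, admits a unique linear factorization, and is indecomposable, yet $(x,\ldots,x)$ is \emph{not} a spherical chain under \eqref{2.6} because $x=\overline{x}$. Your arguments for $(3)\Rightarrow(2)$ and $(5)\Rightarrow(2)$ tacitly assume the class $V$ is a genuine sphere (so that a spherical zero produces \emph{additional} left zeros, and so that $\cX_V$ admits a second splitting $(z-\beta)(z-\overline{\beta})$ with $\beta\ne\alpha_j$). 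This is a defect of the statement and of the definition of spherical chain at real nodes rather than of your proof, but a complete write-up must either exclude real repeated nodes or adjust the chain condition there.
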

The proof of Theorem \ref{T:5.1} can be found in \cite[Section 4]{bolapp}. Here we will demonstrate the 
implication $(2)\Rightarrow (3)$: for $p$ of the form \eqref{1.10}, we have $pp^\sharp=\cX_{[\alpha_1]}\cdots \cX_{[\alpha_n]}=
\cX_{[\alpha_1]}^n$, since all nodes $\alpha_k$ belong to the same similarity class. Therefore,
$p$ has no zeros outside $[\alpha_1]$, and since for any $\gamma\in[\alpha_1]$,
$$
f^{\bl}(\gamma)=(\gamma-\alpha_{1})(\overline{\alpha}_1-\alpha_{2})(\overline{\alpha}_2-\alpha_3)\cdots
(\overline{\alpha}_{n-1}-\alpha_{n}),
$$
it follows that $\alpha_1$ is the only left zero of $p$. 

\medskip
\noindent
{\bf 2.3. Zero structure}. The notion of zero multiplicity for quaternion power series (in particular, for
quaternion polynomials) is a delicate issue (for related results based on various notions of zero multiplicities, 
we refer to \cite{ps,gs,gs1,bolapp}). 

\smallskip

For $f\in\mathcal H_R$, we say that $x\in(-R,R)$ is zero of $f$ of multiplicity $\pi$ if $f=\bp_x^\pi g$ for some 
$g\in\mathcal H_R$ having no zero at $x$. Also, a similarity class $V\subset \mathbb B_R$ is said to be a spherical zero 
of $f$ of multiplicity $m_s(V,f)=k$ if $f=\cX_V^k g$ for some $g\in\mathcal H_R$ for which $V$ is not a spherical zero.
Unless $f=0$, both multiplicities must be finite as otherwise the real power series $ff^\sharp$ considered over the
complex disc $\mathbb B_R\bigcap \C$ would have zero of infinite multiplicity at $x$ or, respectively, at 
$\alpha\in V\bigcap \C$. The next result (see \cite[Section 3]{bolapp}) describes the zero structure of 
$f$ within a given similarity class.
\begin{theorem}
Given $f\in\mathcal H_R$, let $V$ be a similarity class containing zeros of $f$ and let 
$m_s(V,ff^\sharp)=k$.
Then there exist unique (monic) polynomials $D^f_{{\boldsymbol \ell}, V}$ and $D^f_{{\boldsymbol r}, V}$ with all 
zeros in $V$ such that
\begin{equation}
f=D^f_{{\boldsymbol \ell}, V}\cdot h_V=g_V\cdot D^f_{{\boldsymbol r}, V}\label{1.7}
\end{equation}
for some $h_V,g_V\in\mathcal H_R$ having no zeros in $V$. More precisely:
\begin{enumerate}
\item If $x$ is a real zero of $F$ of multiplicity $\pi$, then $D^f_{{\boldsymbol \ell}, \{x\}}=D^f_{{\boldsymbol r},
\{x\}}=\bp_x^{\pi}$.
\item If $m_s(V;ff^\sharp)=2m_s(V;f)=2\kappa>0$, then $D^f_{{\boldsymbol \ell}, V}=D^f_{{\boldsymbol r},
V}=\cX_{V}^{\kappa}$.
\item If $m_s(V;f)=\kappa\ge 0$ and $2m_s(V;f)-m_s(V;ff^\sharp)=k>0$, then there exist unique spherical chains
$\balpha=(\alpha_{1},\ldots,\alpha_{k})$ and
$\widetilde\balpha=(\widetilde\alpha_{1},\ldots,\widetilde\alpha_{k})$ in $V$ such that
\begin{equation}
D^f_{{\boldsymbol \ell}, V}=\cX_{V}^{\kappa}\cdot\bp_{\alpha_{1}}\cdots \bp_{\alpha_{k}}
\quad\mbox{and}\quad D^f_{{\boldsymbol r}, V}= \bp_{\widetilde\alpha_{k}}\cdots
\bp_{\widetilde\alpha_{1}}\cX_{V}^{\kappa}.
\label{4.15}
\end{equation}
\end{enumerate}
\label{T:1.1}
\end{theorem}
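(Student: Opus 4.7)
The strategy is to first peel off the maximal spherical factor from $f$, and then iteratively extract linear factors by repeated application of Niven's theorem (Theorem~\ref{T:2.1}).

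\emph{Step 1 (reduction).} Set $\kappa=m_s(V,f)$ and write $f=\cX_V^\kappa g$, where by construction $V$ is not a spherical zero of $g$. Since $\cX_V\in\R[[z]]$ lies in the centre of $\bH[[z]]$, the third identity in \eqref{2.10} yields $ff^\sharp=\cX_V^{2\kappa}\,gg^\sharp$, so $m_s(V,gg^\sharp)=m_s(V,ff^\sharp)-2\kappa$. Case~(1) is then immediate from $\bp_x=\cX_{\{x\}}$ when $V=\{x\}\subset\R$. In Case~(2), $m_s(V,gg^\sharp)=0$, so every potential left or right zero of $g$ in $V$ would be a root of $gg^\sharp$ (by Theorem~\ref{T:2.1}); hence $g$ has no zero in $V$ and $D^f_{\bbl,V}=D^f_{\bbr,V}=\cX_V^\kappa$ does the job, with $h_V=g_V=g$.

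\emph{Step 2 (iterative extraction).} In Case~(3), let $k=m_s(V,gg^\sharp)>0$. Since $V$ is not a spherical zero of $g$ but contains a root of $gg^\sharp$, Theorem~\ref{T:2.1} provides a unique left zero $\alpha_1\in V$ of $g$, and we write $g=\bp_{\alpha_1}g^{(1)}$. Applying the $\sharp$-multiplicativity identity in \eqref{2.10} together with $\bp_{\alpha_1}\bp_{\alpha_1}^\sharp=\cX_V$ yields $gg^\sharp=\cX_V\cdot g^{(1)}(g^{(1)})^\sharp$, so $m_s(V,g^{(1)}(g^{(1)})^\sharp)=k-1$. Iterating produces a factorization $g=\bp_{\alpha_1}\bp_{\alpha_2}\cdots\bp_{\alpha_k}g^{(k)}$ with $g^{(k)}$ having no zero in $V$. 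Setting $D^f_{\bbl,V}=\cX_V^\kappa\bp_{\alpha_1}\cdots\bp_{\alpha_k}$ and $h_V=g^{(k)}$ gives the left factorization in \eqref{1.7}.

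\emph{Step 3 (chain condition, uniqueness, right side).} At every stage one must verify that (i)~$g^{(j)}$ does not have $V$ as a spherical zero (so that Theorem~\ref{T:2.1} continues to apply) and (ii)~the nodes form a spherical chain, i.e.~$\alpha_{j+1}\neq\overline{\alpha}_j$. Both follow from the maximality of $\kappa$: if $g^{(j)}=\cX_V h$ then commuting the central factor $\cX_V$ leftward gives $g=\cX_V(\bp_{\alpha_1}\cdots\bp_{\alpha_j}h)$, contradicting $m_s(V,g)=0$; and $\alpha_{j+1}=\overline{\alpha}_j$ would force $\bp_{\alpha_j}\bp_{\alpha_{j+1}}=\cX_V$, producing the same contradiction. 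Uniqueness of each $\alpha_j$ (hence of the chain $\balpha$) follows from the uniqueness part of Theorem~\ref{T:2.1}, whence uniqueness of $D^f_{\bbl,V}$. The right factorization $f=g_V\cdot D^f_{\bbr,V}$ with $D^f_{\bbr,V}=\bp_{\widetilde\alpha_k}\cdots\bp_{\widetilde\alpha_1}\cX_V^\kappa$ is obtained by the symmetric procedure, extracting the unique right zero in $V$ at each stage from Theorem~\ref{T:2.1}.

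\emph{Main obstacle.} The genuinely subtle point is the chain condition in Step~3: one must exclude every mechanism by which a hidden $\cX_V$-factor could reappear in the middle of the linear string. This reduces to the algebraic identity $\bp_\alpha\bp_{\overline{\alpha}}=\cX_{[\alpha]}$, which is essentially the only way a central polynomial can arise as a product of two linear quaternionic factors drawn from the same similarity class $V$.
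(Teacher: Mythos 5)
Your proof is correct and follows essentially the same route the paper takes: the paper defers the proof of this theorem to \cite{bolapp}, but its proof of the Blaschke-product analogue (Theorem \ref{T:1.1h}) is exactly your argument --- factor out $\cX_V^{\kappa}$, iteratively extract the unique left (resp.\ right) zero in $V$ via Theorem \ref{T:2.1} while tracking $m_s(V,\cdot)$ of the symmetrization $gg^\sharp$, and rule out $\alpha_{j+1}=\overline{\alpha}_j$ because it would reinstate a central factor $\cX_V$ and contradict the maximality of $\kappa$. You also correctly read the quantity $k$ in case (3) as $m_s(V;ff^\sharp)-2m_s(V;f)$, silently fixing the sign typo in the statement.
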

Following \cite{bolapp}, we will refer to $D^f_{{\boldsymbol \ell}, V}$ and $D^f_{{\boldsymbol r}, V}$ as to {\em left} 
and {\em right spherical divisors} of $f$. Left (right) spherical divisors corresponding to different spheres are left (right) coprime.
If the zero set of $f\in\mathcal H_R$ is contained in the union of finitely many spheres $V_1\cup\cdots\cup V_n$ and if 
$D^f_{{\boldsymbol \ell},V_k}$ and $D^f_{{\boldsymbol r}, V_k}$ are the corresponding spherical divisors, 
then $f$ can be factored as 
$$
f=P^f_\ell h=gP^f_r 
$$
where $P^f_\ell$ and $P^f_r$ are defined as the least common multiples 
$$
P^f_\ell={\bf lrcm}(D^f_{{\boldsymbol \ell}, V_1},\ldots,D^f_{{\boldsymbol \ell}, V_n}),\qquad
P^f_r={\bf llcm}(D^f_{{\boldsymbol r}, V_1},\ldots,D^f_{{\boldsymbol r}, V_n}),
$$
and $h,g\in \mathcal H_R$  have no zeros. In this case, $P^f_\ell$ and $P^f_r$ contain all information about the 
left (right) zero structure of $f$.

\smallskip

We conclude this section with an example of a power series having no zeros. Let us consider the power series
\begin{equation}
{\bf k}_{\alpha}(z)={\displaystyle\sum_{k=0}^\infty \overline{\alpha}^kz^k}\quad(\alpha\in\mathbb B_1)
\label{jan13}
\end{equation}
that clearly belongs to $\mathcal H_{|\alpha|^{-1}}$. Since the real power series
$$
{\bf k}_{\alpha}(z){\bf k}^\sharp_{\alpha}(z)=(1-z(\alpha+\overline{\alpha})+z^2|\alpha|^2)^{-1}\in\R[[z]]
$$
has no zeros, ${\bf k}_{\alpha}$ has no zeros either. 
Applying the left evaluation functional to the equality 
$(|\alpha|^2\cX_{[\alpha^{-1}]}{\bf k}_{\alpha})(z)=1-z\alpha$ we get
$$
(1-\gamma(\alpha+\overline{\alpha})+\gamma^2|\alpha|^2){\bf k}_{\alpha}^{\bl}(\gamma)=1-\gamma\alpha,
$$
which implies
\begin{equation}
{\bf k}_{\alpha}^{\bl}(\gamma)=(1-\gamma(\alpha+\overline{\alpha})+\gamma^2|\alpha|^2)^{-1}(1-\gamma\alpha).
\label{20}
\end{equation}
Similarly, evaluating the equality $|\alpha|^2{\bf k}_{\alpha}\cX_{[\alpha^{-1}]}=1-z\alpha$ on the right at 
$\gamma$ gives
\begin{equation}
{\bf k}_{\alpha}^{\br}(\gamma)=(1-\alpha\gamma)(1-\gamma(\alpha+\overline{\alpha})+\gamma^2|\alpha|^2)^{-1}.
\label{20a}
\end{equation}

\section{Finite Blaschke products as Schur-class power series}
\setcounter{equation}{0}

The classical Schur class $\mathcal S$ consists of complex functions that are analytic 
on the open unit disk $\D\subset\C$ and map $\D$ into its closure.
By the maximum modulus principle, each Schur function is either an analytic self-mapping
of $\D$ or a unimodular constant. In the seminal paper \cite{Schur}, Schur-class functions
were characterized as power series such that the lower triangular Toeplitz matrix constructed 
from its coefficients by the formula
\begin{equation}
{\bf T}_n^f=\left[\begin{array}{cccc}f_{0} & 0 & \ldots & 0
\\ f_{1}& f_{0} & \ddots & \vdots \\ \vdots& \ddots & \ddots & 0
\\ f_{n-1}&  \ldots & f_{1} & f_{0}\end{array}\right],\quad\mbox{where}\quad f(z)=\sum_{k=0}^\infty f_kz^k,
\label{1.4}
\end{equation}
is contractive for all $n\ge 1$. For a fixed $n$, the later property is equivalent to the inequality
$\|fp\|_{H^2(\D)}\le \|p\|_{H^2(\D)}$ holding for any polynomial $p\in\C[z]$ with $\deg (p)\le n$ (where $H^2(\D)$ stands for 
the Hardy space of power series with square summable coefficients). 
Since polynomials are dense in $H^2(\D)$, the functions $f\in\mathcal S$ are characterized by the 
above norm inequality holding for all $p\in H^2(\D)$; in other words, the operator $M_f: \, p\to fp$ of multiplication 
by $f$ is a contraction on  $H^2(\D)$. Thus, the Schur class coincides with the class of {\em contractive multipliers} 
of the Hardy space $H^2(\D)$. 

\smallskip
\noindent
{\bf 3.1. The Schur class $\mathcal S_{\mathbb H}$} of quaternionic power series can be introduced in  at least five different ways:
in terms of left and right values, in terms of (quaternionic) matrices \eqref{3.1}, or as (left or right) contractive multipliers of 
the space
\begin{equation}
H^2(\mathbb B_1)=\bigg\{f(z)=\sum_{k=0}^\infty f_kz^k\in \mathbb H[[z]]: \; \|f\|^2=\sum_{k=0}^\infty |f_k|^2<\infty\bigg\},
\label{dop4}
\end{equation}
the quaternionic counter-part of $H^2(\D)$ (note that the square-summability condition guarantees the inclusion 
$H^2(\mathbb B_1)\subset \mathcal H_1$). The next theorem (see \cite{abcs} for the proof)
shows that all five ways are equivalent. Power series $f\in \mathbb H[[z]]$ satisfying one (and therefore all)
of five conditions below will be called a {\em Schur power series}.
\begin{theorem}
Let $f\in \bH[[z]]$ be as in \eqref{1.4}. The following are equivalent:
\begin{enumerate}
\item $|f^{\bl}(\alpha)|\le 1$ for all $\alpha\in\mathbb B_1$.
\item $|f^{\br}(\alpha)|\le 1$ for all $\alpha\in\mathbb B_1$.
\item The matrix ${\bf T}_n^f$ is contractive for all $n\ge 1$.
\item $\|fh\|_{H^2(\mathbb B_1)}\le \|h\|_{H^2(\mathbb B_1)}$ for all $h\in H^2(\mathbb B_1)$.
\item $\|hf\|_{H^2(\mathbb B_1)}\le \|h\|_{H^2(\mathbb B_1)}$ for all $h\in H^2(\mathbb B_1)$.
\end{enumerate}
\label{T:3.1}
\end{theorem}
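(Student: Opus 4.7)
I would establish the cycle $(1)\Rightarrow(3)\Rightarrow(4)\Rightarrow(1)$, whose equivalence is the technical core, and then recover (2) and (5) via the anti-involution $\sharp$ together with formula \eqref{19}.

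For $(3)\Leftrightarrow(4)$, the convolution formula \eqref{8} yields, for a polynomial $p=\sum_{j<n}p_jz^j$ and any $N\ge n$,
\[
((fp)_0,\ldots,(fp)_{N-1})^T={\bf T}_N^f(p_0,\ldots,p_{n-1},0,\ldots,0)^T,
\]
so $\sum_{k<N}|(fp)_k|^2\le\|{\bf T}_N^f\|^2\|p\|^2$. Letting $N\to\infty$ and invoking density of polynomials in $H^2(\mathbb B_1)$ gives $(3)\Rightarrow(4)$; the converse bounds $\|{\bf T}_n^fv\|^2$ by $\|fp\|^2\le\|p\|^2=\|v\|^2$ with $p$ the polynomial associated to $v$. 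For $(4)\Rightarrow(1)$, the kernel ${\bf k}_\alpha$ of \eqref{jan13} is left-reproducing on $H^2(\mathbb B_1)$ (directly from \eqref{dop4}: $\langle h,{\bf k}_\alpha\rangle=h^{\bl}(\alpha)$); the product rule \eqref{16a} gives $M_f^*{\bf k}_\alpha={\bf k}_w\,\overline{f^{\bl}(\alpha)}$ with $w=f^{\bl}(\alpha)^{-1}\alpha f^{\bl}(\alpha)\in[\alpha]$, and since $\|{\bf k}_w\|=\|{\bf k}_\alpha\|$, the estimate $\|M_f^*{\bf k}_\alpha\|\le\|M_f\|\,\|{\bf k}_\alpha\|$ forces $|f^{\bl}(\alpha)|\le\|M_f\|\le 1$.

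For the harder $(1)\Rightarrow(3)$, I would use the slice reduction of Lemma \ref{L:j1}: fix a non-real $\alpha_0$, write $f=g_1+g_2\beps$, and use the isometric splitting $\mathbb H^n\cong\C_{\alpha_0}^n\oplus\C_{\alpha_0}^n\beps$. The quaternionic Toeplitz action of ${\bf T}_n^f$ becomes — after applying the non-commutative twist \eqref{jan7} to push all $\beps$-factors to the right — multiplication by a $2n\times 2n$ complex block-Toeplitz matrix whose symbol is a $2\times 2$-matrix-valued complex analytic function $F(z)$ on the slice disk. Condition (1) translates exactly into the matrix-Schur inequality $\|F(z)\|\le 1$ for $z\in\D$, so the classical operator-valued Schur theorem gives $\|{\bf T}_n^f\|\le 1$.

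Finally, \eqref{19} identifies (2) for $f$ with (1) for $f^\sharp$; the identity $(hf)^\sharp=f^\sharp h^\sharp$ together with the $\sharp$-invariance of $\|\cdot\|_{H^2(\mathbb B_1)}$ identifies (5) for $f$ with (4) for $f^\sharp$; and applying the cycle just proved to $f^\sharp$ identifies both of these with (3) for $f^\sharp$. The equality $\|{\bf T}_n^{f^\sharp}\|=\|{\bf T}_n^f\|$ (a standard consequence of the fact that the quaternionic Hermitian positive matrices $A^*A$ and $\overline A^*\overline A$ share their spectrum) then identifies (3) for $f^\sharp$ with (3) for $f$, closing the full five-way equivalence. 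The main obstacle is the slice-to-matrix reduction in $(1)\Rightarrow(3)$: expanding the Toeplitz action via \eqref{jan7} — where $\beps$ conjugates any $\C_{\alpha_0}$-element it crosses — identifying the correct $2\times 2$ Schur symbol $F$, and verifying that the slice isomorphism intertwines the quaternionic Toeplitz matrix with the complex block-Toeplitz one; the non-commutative bookkeeping is delicate but is the natural bridge between the quaternionic scalar and the complex matrix Schur theories.
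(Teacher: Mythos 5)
The first thing to note is that the paper does not prove Theorem \ref{T:3.1} at all: it states ``see \cite{abcs} for the proof,'' and the only fragment of argument it supplies is Remark \ref{R:3.2}, which derives (5) from (4) via the involution $\sharp$ and \eqref{19} --- exactly the device you use in your last paragraph. So your proposal is a genuinely independent, self-contained route, and its skeleton is sound: the identification of ${\bf T}_N^f$ acting on coefficient vectors with the first $N$ coefficients of $fh$ gives $(3)\Leftrightarrow(4)$ correctly (the coefficients of $f$ sit on the left in both the convolution \eqref{8} and the matrix--vector product, so the orders match); and the reproducing-kernel computation $M_f^*{\bf k}_\alpha={\bf k}_w\overline{f^{\bl}(\alpha)}$ with $\|{\bf k}_w\|=\|{\bf k}_\alpha\|$ is a clean proof of $(4)\Rightarrow(1)$.

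Two points in the hard step deserve correction, though neither is fatal. First, the complexification you describe represents \emph{left} multiplication by the coefficients $f_k$ as right-$\C_{\alpha_0}$-linear $2\times 2$ blocks; the resulting symbol $F(z)=\sum_k\chi(f_k)z^k$ satisfies $F(z)w=f^{\br}(wzw^{-1})\,w$, so $\sup_{z\in\D}\|F(z)\|=\sup_{\alpha\in\mathbb B_1}|f^{\br}(\alpha)|$. Hence the matrix-Schur inequality $\|F(z)\|\le 1$ is condition \emph{(2)}, not condition (1), and your slice reduction proves $(2)\Rightarrow(3)$. This does not break the theorem --- reroute via $f^\sharp$: condition (1) for $f$ is condition (2) for $f^\sharp$ by \eqref{19}, which gives contractivity of ${\bf T}_n^{f^\sharp}$, and then $\|{\bf T}_n^{f^\sharp}\|=\|{\bf T}_n^f\|$ closes the cycle --- but the labels must be fixed for the cycle $(1)\Rightarrow(3)\Rightarrow(4)\Rightarrow(1)$ to be non-circular. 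Second, your justification of $\|{\bf T}_n^{f^\sharp}\|=\|{\bf T}_n^f\|$ (``$A^*A$ and $\overline A^*\overline A$ share their spectrum'') is shaky over $\mathbb H$, where entrywise conjugation is not an algebra (anti-)homomorphism of matrices; a clean argument is the identity $({\bf T}_n^f)^*=\Sigma\,{\bf T}_n^{f^\sharp}\,\Sigma$, where $\Sigma$ is the flip permutation matrix, combined with $\|A^*\|=\|A\|$ for adjoints on the right quaternionic Hilbert module. With these repairs your argument is a legitimate proof, arguably more economical than tracing through the reference the paper relies on.
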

\begin{Rk}
{\rm If $f$ is not a unimodular constant, then the strict inequalities hold in pats (1) and (2) above, 
by the maximum modulus principle in Lemma \ref{L:im3}.}
\label{R:3.1}
\end{Rk}
\begin{Rk}
{\em The equivalence $(4)\Leftrightarrow(5)$ in Theorem \ref{T:3.1} is not immediate. For example, for  
$f=\frac{1}{2}+\frac{\bf k}{2}z\in\mathcal S_{\mathbb H}$ and $h={\bf i}+{\bf j}z$, we have
$fh=\frac{1}{2}({\bf i}+2{\bf j}z-{\bf i}z^2)$ and $hf=\frac{\bf i}{2}(1+z^2)$ so that 
$\|fh\|^2_{H^2(\mathbb B_1)}=\frac{3}{2}\neq \frac{1}{2}=\|hf\|^2_{H^2(\mathbb B_1)}$.
However, due to \eqref{19}, the equivalence $(4)\Leftrightarrow(5)$ means that $f\in\mathcal S_{\mathbb H}$
if and only if $f^\sharp\mathcal S_{\mathbb H}$. Since $\|h\|_{H^2(\mathbb B_1)}=\|h^\sharp\|_{H^2(\mathbb B_1)}$,
by definition \eqref{dop4}), then we can derive (5) from (4) as follows:
$$
\|hf\|_{H^2(\mathbb B_1)}=\|f^\sharp h^\sharp\|_{H^2(\mathbb B_1)}\le \|h^\sharp\|_{H^2(\mathbb B_1)}=\|h\|_{H^2(\mathbb B_1)}.
$$}
\label{R:3.2}
\end{Rk}
\noindent
{\bf 3.2. Blaschke factors}. We now recall the simplest non-constant examples of Schur power series, which 
has already draw some attention as slice-regular automorphisms of the quaternionic unit ball $\mathbb B_1$ 
\cite{bige,bist,heja} and isometric multipliers of the Hardy space $H^2(\mathbb B_1)$ \cite{acs,acsbook}.
Observe that the series \eqref{jan13} satisfies the identity 
${\bf k}_{\alpha}(z)\cdot (1-z\overline\alpha)\equiv 1$ meaning that ${\bf k}_{\alpha}$
is the formal inverse of the polynomial $1-z\overline\alpha$. Hence, the power series
\begin{align}
{\bf b}_{\alpha}(z)&:=\bp_{\alpha}(z)\cdot {\bf k}_{\alpha}(z)={\bf k}_{\alpha}(z)\cdot \bp_{\alpha}(z)\notag\\
&=(z-\alpha)\cdot \sum_{k=0}^\infty
\overline{\alpha}^kz^k=-\alpha+(1-|\alpha|^2)\cdot  \sum_{k=0}^\infty \overline{\alpha}^kz^{k+1}
\label{9.4}
\end{align}
can be viewed as the quaternionic analog of the Blaschke factor $\frac{z-\alpha}{1-z\overline{\alpha}}$.
It is clear from \eqref{9.4} that $\alpha$ is the only left (and right) zero of 
${\bf b}_\alpha$. Combining \eqref{9.4} and \eqref{20}, \eqref{20a} gives
\begin{align}
{\bf b}_{\alpha}^{\bl}(\gamma)&=\gamma {\bf k}_{\alpha}^{\bl}(\gamma)-{\bf k}_{\alpha}^{\bl}(\gamma)\alpha\notag\\
&=(1-\gamma(\alpha+\overline{\alpha})+\gamma^2|\alpha|^2)^{-1}\big(
\gamma(1-\gamma\alpha)-(1-\gamma\alpha)\alpha\big),\label{22}\\
{\bf b}_{\alpha}^{\br}(\gamma)&={\bf k}_{\alpha}^{\br}(\gamma)\gamma-\alpha{\bf k}_{\alpha}^{\br}(\gamma)\notag\\
&=\big((1-\alpha\gamma)\gamma-\alpha(1-\alpha\gamma)\big)
(1-\gamma(\alpha+\overline{\alpha})+\gamma^2|\alpha|^2)^{-1}.\label{21}
\end{align}
Making use of notation
\begin{equation}
\gamma_\ell:=(1-\gamma\alpha)^{-1}\gamma(1-\gamma\alpha),\qquad \gamma_r:=(1-\alpha\gamma)\gamma(1-\alpha\gamma)^{-1}
\label{23}
\end{equation}
and observing the equalities
$$
1-\gamma(\alpha+\overline{\alpha})+\gamma^2|\alpha|^2=(1-\gamma\alpha)(1-\gamma_\ell\overline{\alpha})=
(1-\overline{\alpha}\gamma_r)(1-\alpha\gamma),
$$
we may write the formulas \eqref{22}, \eqref{21} as 
\begin{equation}
{\bf b}_{\alpha}^{\bl}(\gamma)=(1-\gamma_\ell\overline{\alpha})^{-1}(\gamma_\ell-\alpha),\qquad
{\bf b}_{\alpha}^{\br}(\gamma)=(\gamma_r-\alpha)(1-\overline{\alpha}\gamma_r)^{-1}.
\label{20b}
\end{equation}
\begin{Rk}
{\rm Observe that in case $\gamma\in\C_\alpha$ (i.e., if $\gamma\alpha=\alpha\gamma$), the formulas \eqref{20b} 
amount to the usual point evaluation:
$$
{\bf b}_{\alpha}^{\bl}(\gamma)={\bf b}_{\alpha}^{\br}(\gamma)=(\gamma-\alpha)(1-\gamma\overline{\alpha})^{-1}\quad
\mbox{for all} \; \; \gamma\in\C_\alpha.
$$
On the other hand, if $\gamma\in[\alpha]$ (i.e., $\gamma\sim \alpha$), then $\gamma^2=\gamma(\alpha+\overline{\alpha})-|\alpha|^2$ and therefore,
$$
1-\gamma(\alpha+\overline{\alpha})+\gamma^2|\alpha|^2=(1-|\alpha|^2)(1-\gamma^2),
$$
$$
\gamma(1-\gamma\alpha)-(1-\gamma\alpha)\alpha=(1-|\alpha|^2)(\gamma-\alpha)=
(1-\alpha\gamma)\gamma-\alpha(1-\alpha\gamma.
$$
With these substitutions, we conclude from \eqref{22}, \eqref{21} that
\begin{equation}
{\bf b}_{\alpha}^{\bl}(\gamma)=(1-\gamma^2)^{-1}(\gamma-\alpha),\quad
{\bf b}_{\alpha}^{\br}(\gamma)=(\gamma-\alpha)(1-\gamma^2)^{-1} \quad\mbox{if}\; \; \gamma\in[\alpha].
\label{24}
\end{equation}}
\label{R:3.3}
\end{Rk}
\begin{proposition}
The Blaschke factor ${\bf b}_\alpha$ belongs to $\mathcal S_{\mathbb H}$. Moreover,
\begin{equation}
|{\bf b}_{\alpha}^{\bl}(\gamma)|=|{\bf b}_{\alpha}^{\br}(\gamma)| \; 
\begin{array}{ccc}<1, & \mbox{\; if} & |\gamma|<1, \\ =1, & \; \mbox{if} & |\gamma|=1.\end{array}
\label{20e}
\end{equation}
\label{P:91}
\end{proposition}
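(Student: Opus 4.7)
The plan is to read off $|{\bf b}_\alpha^{\bl}(\gamma)|$ and $|{\bf b}_\alpha^{\br}(\gamma)|$ from the formulas \eqref{20b} and then invoke the quaternionic analog of the classical Schwarz identity. Since $\gamma_\ell$ and $\gamma_r$ are both conjugate to $\gamma$, we have $|\gamma_\ell|=|\gamma_r|=|\gamma|$. The key algebraic identity to be established first, verified by direct expansion together with the cyclic property $\Re(\beta\overline\alpha)=\Re(\overline\alpha\beta)$, reads
$$
|1-\beta\overline\alpha|^2 - |\beta-\alpha|^2 = (1-|\beta|^2)(1-|\alpha|^2) \qquad \text{for all } \alpha,\beta\in\mathbb H.
$$

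Applying this with $\beta=\gamma_\ell$ and $\beta=\gamma_r$ (and noting that $|1-\overline\alpha\gamma_r|^2 = |1-\gamma_r\overline\alpha|^2$, since both expand to $1+|\gamma|^2|\alpha|^2-2\Re(\gamma_r\overline\alpha)$) yields
$$
|1-\gamma_\ell\overline\alpha|^2 - |\gamma_\ell-\alpha|^2 \;=\; |1-\overline\alpha\gamma_r|^2 - |\gamma_r-\alpha|^2 \;=\; (1-|\gamma|^2)(1-|\alpha|^2).
$$
Combined with \eqref{20b}, this gives both $|{\bf b}_\alpha^{\bl}(\gamma)|$ and $|{\bf b}_\alpha^{\br}(\gamma)|$ strictly less than $1$ when $|\gamma|<1$ and equal to $1$ when $|\gamma|=1$ (positivity of the denominators $1+|\gamma|^2|\alpha|^2-2\Re(\gamma_{\ell/r}\overline\alpha)\ge(1-|\gamma||\alpha|)^2$ is automatic for $|\gamma|\le 1,|\alpha|<1$). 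This proves the inequalities in \eqref{20e} and, by Theorem \ref{T:3.1}(1), places ${\bf b}_\alpha$ in $\mathcal S_{\mathbb H}$.

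The main obstacle is the remaining equality $|{\bf b}_\alpha^{\bl}(\gamma)|=|{\bf b}_\alpha^{\br}(\gamma)|$. Both squared moduli take the common form $(|\gamma|^2+|\alpha|^2-2r)/(1+|\gamma|^2|\alpha|^2-2r)$, so the task reduces to showing $\Re(\gamma_\ell\overline\alpha)=\Re(\gamma_r\overline\alpha)$. Here is how I expect this to go. Set $M=1-\gamma\alpha$ and $N=1-\alpha\gamma$. The identity $M\gamma=\gamma N$ (both sides equal $\gamma-\gamma\alpha\gamma$) gives $M^{-1}\gamma=\gamma N^{-1}$, and hence $\gamma_r=N\gamma N^{-1}=NM^{-1}\gamma$. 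Using $M\overline\alpha=\overline\alpha-|\alpha|^2\gamma$ one computes $\Re(\gamma_\ell\overline\alpha)=\Re(M^{-1}\gamma M\overline\alpha)=\Re(M^{-1}\gamma\overline\alpha)-|\alpha|^2\Re(M^{-1}\gamma^2)$. Likewise, the cyclic property of $\Re$ together with $\gamma\overline\alpha N=\gamma\overline\alpha-|\alpha|^2\gamma^2$ gives $\Re(\gamma_r\overline\alpha)=\Re(M^{-1}\gamma\overline\alpha\,N)=\Re(M^{-1}\gamma\overline\alpha)-|\alpha|^2\Re(M^{-1}\gamma^2)$. The two expressions collapse to the same real number, completing the proof.
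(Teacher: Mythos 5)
Your proof is correct and follows essentially the same route as the paper: derive $1-|{\bf b}_\alpha^{\bl/\br}(\gamma)|^2=(1-|\alpha|^2)(1-|\gamma|^2)\,|1-\cdot|^{-2}$ from \eqref{20b}, then reduce the equality of the two moduli to $\Re(\gamma_\ell\overline\alpha)=\Re(\gamma_r\overline\alpha)$, which both you and the paper verify via the cyclic property of $\Re$ and the intertwining relations $(1-\gamma\alpha)\gamma=\gamma(1-\alpha\gamma)$, $(1-\gamma\alpha)\overline\alpha=\overline\alpha(1-\alpha\gamma)$. The only difference is cosmetic: you expand both real parts to the common expression $\Re(M^{-1}\gamma\overline\alpha)-|\alpha|^2\Re(M^{-1}\gamma^2)$, whereas the paper uses a single chain of equalities.
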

\begin{proof}
Making use of formulas \eqref{20b} we get the equalities
\begin{align}
1-|{\bf b}_{\alpha}^{\br}(\gamma)|^2&=(1-|\alpha|^2)(1-|\gamma|^2)|1-\gamma_\ell\overline\alpha|^{-2},\label{25}\\
1-|{\bf b}_{\alpha}^{\bl}(\gamma)|^2&=(1-|\alpha|^2)(1-|\gamma|^2)|1-\overline\alpha\gamma_r|^{-2}.\label{26}
\end{align}
which imply all the relations in \eqref{20e}, except for the equality 
$|{\bf b}_{\alpha}^{\bl}(\gamma)|=|{\bf b}_{\alpha}^{\br}(\gamma)|$ for $\gamma\in\mathbb B_1$. To derive
the latter equality from \eqref{25}, \eqref{26}, it suffices to verify that 
\begin{equation}
|1-\gamma_\ell\overline\alpha|=|1-\overline\alpha\gamma_r|.
\label{20d}
\end{equation}
To this end, we observe that since $(1-\gamma\alpha)^{-1}\gamma=\gamma(1-\alpha\gamma)^{-1}$ and $(1-\gamma\alpha)\overline{\alpha}=
\overline{\alpha}(1-\alpha\gamma)$, and since $\Re(uv)=\Re(vu)$ for all $u,v\in\mathbb H$, we have
\begin{align*}
\Re(\gamma_\ell\overline{\alpha})&=\Re((1-\gamma\alpha)^{-1}\gamma(1-\gamma\alpha)\overline{\alpha})\\
&=\Re(\gamma(1-\alpha\gamma)^{-1}\overline{\alpha}(1-\alpha\gamma))\\
&=\Re(\overline{\alpha}(1-\alpha\gamma)\gamma(1-\alpha\gamma)^{-1}=\Re(\overline{\alpha}\gamma_r).
\end{align*}
Since $\gamma_\ell$ and $\gamma_r$ belong to the similarity class $[\gamma]$, by definitions \eqref{23}, 
we have in particular, $|\gamma_\ell|=|\gamma_r|$, due to the characterization \eqref{7} of $[\gamma]$. Hence,
$$
|1-\gamma_\ell\overline\alpha|^2-|1-\overline\alpha\gamma_r|^2=2\Re(\overline\alpha\gamma_r)-
2\Re(\gamma_\ell\overline\alpha)+(|\gamma_\ell|^2-|\gamma_r|^2)|\alpha|^2=0,
$$
from which \eqref{20d} follows, thus completing the proof of \eqref{20e}. \qed
\end{proof}
Note that one can use formulas \eqref{20b} and \eqref{23} to recover the element $\gamma$ from its left 
or right image under ${\bf b}_{\alpha}$ as follows: from the first formula in \eqref{20b}, we have
$$
\gamma_\ell=(\alpha+{\bf b}_{\alpha}^{\bl}(\gamma))(1+\overline{\alpha}{\bf b}_{\alpha}^{\bl}(\gamma))^{-1}
=(1+{\bf b}_{\alpha}^{\bl}(\gamma)\overline{\alpha})^{-1}(\alpha+{\bf b}_{\alpha}^{\bl}(\gamma)),
$$
while from the first formula in \eqref{23} we get
$$
\gamma=(1-\alpha\overline{\gamma}_\ell)\gamma_\ell(1-\alpha\overline{\gamma}_\ell)^{-1}=
(1-\gamma_\ell\overline{\alpha})^{-1}\gamma_\ell(1-\gamma_\ell\overline{\alpha}).
$$
Combining the two latter formulas, one can see that actually, 
$\gamma={\bf b}_{-\alpha}^{\bl}({\bf b}_{\alpha}^{\bl}(\gamma))$. Similarly, it turns out that also
$\gamma={\bf b}_{-\alpha}^{\br}({\bf b}_{\alpha}^{\br}(\gamma))$.

\smallskip

Thus, in analogy to the complex case, ${\bf b}_{\alpha}$ gives rise to two automorphisms 
$\gamma\mapsto {\bf b}_{\alpha}^{\bl}(\gamma)$ and $\gamma\mapsto {\bf b}_{\alpha}^{\br}(\gamma)$ 
of the closed unit ball $\overline{\mathbb B}_1$.
\begin{Rk}
{\rm Given a Blaschke factor ${\bf b}_\alpha$ with a non-real zero $\alpha$, its conjugate (in the sense
of \eqref{9}) ${\bf b}_\alpha^\sharp={\bf b}_{\overline{\alpha}}$ turns out to be a Blaschke factor vanishing at $\overline{\alpha}$.
Their product
\begin{align}
\mathcal B_{[\alpha]}&:={\bf b}_\alpha{\bf b}_\alpha^\sharp=\bp_\alpha\bp_\alpha^\sharp{\bf k}_\alpha{\bf k}_\alpha^\sharp
=\cX_{[\alpha]}{\bf k}_\alpha{\bf k}_\alpha^\sharp\label{27}\\
&=(z^2-z(\alpha+\overline{\alpha})+|\alpha|^2)(1-z(\alpha+\overline{\alpha})+z^2|\alpha|^2)^{-1}\in\R[[z]]\notag
\end{align}
is a Blaschke product of degree two whose zero set coincides with the whole $[\alpha]$. Following the terminology of \cite{acs,acsbook},
we will call $\mathcal B_{[\alpha]}$ the Blaschke factor of the sphere $[\alpha]$.} 
\label{R:im4}
\end{Rk}

\medskip
\noindent
{\bf 3.3. Finite Blaschke products}. In analogy to the complex case \eqref{2}, a finite Blaschke product over $\mathbb H$
is defined as the power-series product of finitely many Blaschke factors of the form \eqref{9.4}:
\begin{equation}
\B(z)={\bf b}_{\alpha_1}(z){\bf b}_{\alpha_2}(z)\cdots {\bf b}_{\alpha_n}(z)\phi,\qquad
(\alpha_1,\ldots,\alpha_n\in\mathbb B_1, \;  |\phi|=1).
\label{9.6}
\end{equation}
It is clear that $\B$ belongs to $\mathcal H_R$ with $R=(\max\{|\alpha_1|,\ldots,|\alpha_n|\})^{-1}>1$. Therefore
evaluations $\B^{\bl}(\alpha)$ and $\B^{\br}(\alpha)$ make sense for any $\alpha\in\overline{\mathbb B}_1$. The integer 
$n$ in \eqref{9.6} is called the degree of $\B$; for the sake of uniformity, we will interpret unimodular constants as 
Blaschke products of degree zero.
\begin{Rk}
{\rm Placing the constant unimodular factor $\phi$ on the right is just a notational convention. Although multiplication in $\mathbb H$
is non-commutative, we still have  ${\bf b}_\alpha \cdot \phi=\phi \cdot {\bf b}_{\phi^{-1}\alpha\phi}$ for any $\alpha\in\mathbb B_1$
and $\phi\neq 0$. Thus, although we may equivalently define a finite Blaschke product as a product of Blaschke factors with extra 
unimodular constant factors on the left and even between the non-constant factors, still, at the expense of changing the nodes 
of the Blaschke factors, each such expression can be transformed to the form with the only unimodular factor on the right as in \eqref{9.6}. 
Now we can state that the product of two finite Blaschke products is again a finite Blaschke product.} 
\label{T:im5}
\end{Rk}
\begin{proposition}
For a finite Blaschke product \eqref{9.6}, $|\B^{\bl}(\gamma)|<1$ and $|\B^{\br}(\gamma)|<1$ for $|\gamma|<1$, and
$|\B^{\bl}(\gamma)|=|\B^{\br}(\gamma)|=1$ if $|\gamma|=1$.
\label{P:8.1}
\end{proposition}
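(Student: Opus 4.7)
The plan is to proceed by induction on the degree $n$, using Proposition~\ref{P:91} as the base case and the product-evaluation formulas \eqref{16a}--\eqref{16b} to propagate the bounds through one extra Blaschke factor at a time. The base case $n=1$ follows at once from Proposition~\ref{P:91}, since for $\B = {\bf b}_{\alpha_1}\phi$ the trailing unimodular constant $\phi$ does not affect the moduli of the left/right evaluations (directly for the left evaluation $\B^{\bl}(\gamma) = {\bf b}_{\alpha_1}^{\bl}(\gamma)\phi$, and via \eqref{16b} with the constant $\phi$ for the right evaluation).

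For the inductive step I would factor $\B = fg$ with $f = {\bf b}_{\alpha_1}$ and $g = {\bf b}_{\alpha_2}\cdots{\bf b}_{\alpha_n}\phi$, the latter being a Blaschke product of degree $n-1$ to which the induction hypothesis applies. The crucial elementary observation is that conjugation $\beta \mapsto c\beta c^{-1}$ by any $c\in\bH\setminus\{0\}$ preserves modulus, since $c\beta c^{-1}\in[\beta]$. Applying \eqref{16a}: if $f^{\bl}(\gamma) = 0$ then trivially $\B^{\bl}(\gamma) = 0$, and otherwise
$$
\B^{\bl}(\gamma) \;=\; f^{\bl}(\gamma)\cdot g^{\bl}\bigl(f^{\bl}(\gamma)^{-1}\gamma f^{\bl}(\gamma)\bigr),
$$
where by the observation above the inner argument has modulus $|\gamma|$. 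For $|\gamma|<1$, Proposition~\ref{P:91} gives $|f^{\bl}(\gamma)|<1$ and the induction hypothesis gives $|g^{\bl}(\cdot)|<1$, so $|\B^{\bl}(\gamma)|<1$; for $|\gamma|=1$ both factors have modulus one and the product is unimodular.

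The right evaluation is handled by the same pattern via \eqref{16b}, this time pivoting on whether $g^{\br}(\gamma)$ vanishes, a case the induction hypothesis rules out when $|\gamma|=1$ (so the product formula is always available on the boundary). I do not expect a genuine obstacle: all the essential ingredients are already in hand (the one-factor estimate in Proposition~\ref{P:91}, the conjugation-invariance of modulus, and the harmlessness of the trailing constant $\phi$), and the argument reduces to combining them with the product rule in the appropriate dichotomy. The only care needed is to split off the vanishing cases, which only strengthen the strict inequality for $|\gamma|<1$.
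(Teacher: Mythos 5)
Your proposal is correct and follows essentially the same route as the paper: the paper also recursively applies \eqref{16a} to peel off one Blaschke factor at a time, notes that the shifted evaluation points stay in the similarity class $[\gamma]$ and hence keep the same modulus, and invokes the single-factor estimate \eqref{20e} of Proposition \ref{P:91}, with the vanishing case handled separately and the right evaluation treated symmetrically via \eqref{16b}. Your induction on $n$ is just a reformulation of that recursion, so there is nothing substantive to add.
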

\begin{proof}
If $|\gamma|=1$, then by recursively applying the formula \eqref{16a} to the product \eqref{9.6} we get
\begin{equation}
\B^{\bl}(\gamma)={\bf b}^{\bl}_{\alpha_1}(\gamma){\bf b}^{\bl}_{\alpha_2}(\gamma_1)\cdots {\bf b}^{\bl}_{\alpha_n}(\gamma_{n-1})\phi
\label{9.7}
\end{equation}
for suitable $\gamma_1,\ldots,\gamma_{n-1}\in[\gamma]$. By the characterization \eqref{7}, $|\gamma_k|=1$ and therefore,
$|{\bf b}^{\bl}_{\alpha_1}|=|{\bf b}^{\bl}_{\alpha_{k+1}}(\gamma_k)|=1$ for $k=1,\ldots,n-1$, by property \eqref{20e}. 
It is now seen from \eqref{9.7} that $|\B^{\bl}(\gamma)|=1$. If $|\gamma|<1$, then either ${\bf b}^{\bl}_{\alpha_{k+1}}(\gamma_k)=0$ for some $k$
and therefore, $\B^{\bl}(\gamma)=0$, or we will get the product \eqref{9.7} with
$\gamma_1,\ldots,\gamma_n\in[\gamma]$, this time concluding that $|\gamma_k|=|\gamma|<1$ and therefore,
$|{\bf b}^{\bl}_{\alpha_{k+1}}(\gamma_k)|<1$, again by property \eqref{20e}. Then we conclude from \eqref{9.7} that $|\B^{\bl}(\gamma)|<1$.
The statements concerning $\B^{\br}(\gamma)$ are verified similarly.\qed
\end{proof}
Note that Proposition \ref{P:8.1} does not assert the equality $|\B^{\bl}(\gamma)|=|\B^{\br}(\gamma)|$ for all $\gamma\in\mathbb B_1$.
As matter of fact, this equality may fail even for Blaschke products of degree two. For example, let $\alpha\sim\gamma$ be two noncommuting
similar elements (i.e., $\gamma\neq \alpha,\overline{\alpha}$) and let us consider the Blaschke product
$\B={\bf b}_{\alpha}{\bf b}_{\gamma}$. By formula \eqref{20b}, $\B^{\br}(\gamma)=0$, while the double application of the first formula
in \eqref{24} combined with \eqref{16a} leads us to
\begin{align*}
\B^{\bl}(\gamma)&={\bf b}^{\bl}_{\alpha}(\gamma)\big(1-{\bf b}^{\bl}_{\alpha}(\gamma)^{-1}\gamma^2{\bf b}^{\bl}_{\alpha}(\gamma)\big)^{-1}
\big({\bf b}^{\bl}_{\alpha}(\gamma)^{-1}\gamma{\bf b}^{\bl}_{\alpha}(\gamma)-\gamma\big)\\
&=(1-\gamma^2)^{-1}\big(\gamma{\bf b}^{\bl}_{\alpha}(\gamma)-{\bf b}^{\bl}_{\alpha}(\gamma)\gamma\big)\\
&=(1-\gamma^2)^{-2}\big(\alpha\gamma-\gamma\alpha)\neq 0.
\end{align*}
We next make several comments regarding the zero structure of the Blaschke product \eqref{9.6}.
Any real $\alpha_k\in\R$ in \eqref{9.6} is a two-sided zero of $\B$. If all the nodes 
in \eqref{9.6} are non-real, then we have, by \eqref{2.10} and \eqref{27}, 
$\B\B^\sharp=\prod_{k=1}^n \mathcal B_{[\alpha_k]}\in \R[[z]]$; hence all zeros of $\B$ are contained in 
the union of the (not necessarily distinct) similarity classes $[\alpha_1],\ldots,[\alpha_n]$ and can be found as 
suggested in Theorem \ref{T:2.1}. The zero structure of $\B$ within each similarity class is described in
Theorem \ref{T:1.1}. Within the Schur class $\mathcal S_{\mathbb H}$, it makes sense to split off
zeros of a power series using finite Blaschke products rather than polynomials. Details are given below.
\begin{theorem}
Given $f\in\mathcal S_{\mathbb H}$, let $V$ be a similarity class containing zeros of $f$ and let
$m_s(V,ff^\sharp)=k$.
Then there exist unique finite Blaschke products $B^f_{{\boldsymbol \ell}, V}$ and $B^f_{{\boldsymbol r}, V}$ with all
zeros in $V$ such that
\begin{equation}
f=B^f_{{\boldsymbol \ell}, V}\cdot H_V=G_V\cdot B^f_{{\boldsymbol r}, V}\label{1.7hyp}
\end{equation}
for some $H_V,G_V\in\mathcal S_{\mathbb H}$ having no zeros in $V$. More precisely:
\begin{enumerate}
\item If $x$ is a real zero of $F$ of multiplicity $\pi$, then $B^f_{{\boldsymbol \ell}, \{x\}}=B^f_{{\boldsymbol r},
\{x\}}={\bf b}_x^{\pi}$.
\item If $m_s(V;ff^\sharp)=2m_s(V;f)=2\kappa>0$, then $B^f_{{\boldsymbol \ell}, V}=B^f_{{\boldsymbol r},
V}=\mathcal B_{V}^{\kappa}$.
\item If $m_s(V;f)=\kappa\ge 0$ and $m_s(V;ff^\sharp)-2m_s(V;f)=k>0$, then there exist unique spherical chains
$\balpha=(\alpha_{1},\ldots,\alpha_{k})$ and
$\widetilde\balpha=(\widetilde\alpha_{1},\ldots,\widetilde\alpha_{k})$ in $V$ such that
\begin{equation}
B^f_{{\boldsymbol \ell}, V}=\mathcal B_{V}^{\kappa}\cdot{\bf b}_{\alpha_{1}}\cdots {\bf b}_{\alpha_{k}}
\quad\mbox{and}\quad B^f_{{\boldsymbol r}, V}= {\bf b}_{\widetilde\alpha_{k}}\cdots
{\bf b}_{\widetilde\alpha_{1}}\mathcal B_{V}^{\kappa}.
\label{4.15hyp}
\end{equation}
\end{enumerate}
\label{T:1.1h}
\end{theorem}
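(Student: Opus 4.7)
The plan is to upgrade Theorem \ref{T:1.1} from polynomial to Blaschke-product spherical divisors by substituting $\bp_\alpha \mapsto {\bf b}_\alpha$ and $\cX_V \mapsto \mathcal B_V$ (recall $\mathcal B_V = {\bf b}_\alpha {\bf b}_{\overline{\alpha}}$ for any $\alpha \in V$, by Remark \ref{R:im4}). The algebraic skeleton of the argument is identical to that of Theorem \ref{T:1.1}; the new content is that the remainder $H_V = (B^f_{\ell,V})^{-1} f$ stays in $\mathcal S_{\mathbb H}$ and has no zeros in $V$.

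The central ingredient is a \emph{peeling lemma}: if $f \in \mathcal S_{\mathbb H}$ and $\alpha \in \mathbb B_1$ is a left zero of $f$, then $f = {\bf b}_\alpha g$ for a unique $g \in \mathcal S_{\mathbb H}$. The factorization comes from combining $f = \bp_\alpha \widetilde g$ (the defining property of a left zero, with $\widetilde g \in \mathcal H_1$) with the algebraic identity ${\bf b}_\alpha (1-z\overline{\alpha}) = \bp_\alpha$, which forces $g = (1-z\overline{\alpha})\widetilde g$. For the Schur property of $g$, I would invoke the isometric-multiplier property of ${\bf b}_\alpha$ on $H^2(\mathbb B_1)$ mentioned at the start of Section 3.2: for every $h \in H^2(\mathbb B_1)$,
$$
\|gh\|_{H^2(\mathbb B_1)} = \|{\bf b}_\alpha g h\|_{H^2(\mathbb B_1)} = \|f h\|_{H^2(\mathbb B_1)} \le \|h\|_{H^2(\mathbb B_1)},
$$
where the first equality is the isometric-multiplier property and the inequality is Theorem \ref{T:3.1}(4) applied to $f$. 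A symmetric right-peeling lemma for right zeros follows either by the same argument using Theorem \ref{T:3.1}(5), or by combining the left version with $\sharp$-conjugation via \eqref{19} and Remark \ref{R:3.2}.

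Existence of the factorizations in \eqref{1.7hyp} now follows by iterating the peeling lemma. Starting from $f = D^f_{\ell,V} h_V$ given by Theorem \ref{T:1.1}, the explicit form of $D^f_{\ell,V}$ together with the spherical-chain characterization of indecomposability (Theorem \ref{T:5.1}) identifies the left zero of the current remainder that each successive ${\bf b}_{\alpha_j}$ in $B^f_{\ell,V}$ is meant to strip off: in case (3), one first peels $\mathcal B_V^\kappa = ({\bf b}_\alpha {\bf b}_{\overline{\alpha}})^\kappa$ (which amounts to $2\kappa$ applications of the peeling lemma), then the chain factors ${\bf b}_{\alpha_1}, \ldots, {\bf b}_{\alpha_k}$ in order; cases (1) and (2) are degenerate sub-cases. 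The resulting $H_V$ lies in $\mathcal S_{\mathbb H}$ by repeated application of the peeling lemma, and has no zeros in $V$ because every left zero of $f$ in $V$ has already been accounted for in $B^f_{\ell,V}$ (its zero counting matches that of $D^f_{\ell,V}$).

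Uniqueness in Theorem \ref{T:1.1h} reduces to uniqueness in Theorem \ref{T:1.1}: a competing $f = B H$ with $B$ a finite Blaschke product whose zeros lie in $V$ and $H \in \mathcal S_{\mathbb H}$ zero-free in $V$ yields, on expanding each ${\bf b}_\alpha = \bp_\alpha {\bf k}_\alpha$ and each $\mathcal B_V = \cX_V \cdot {\bf k}_\alpha {\bf k}_\alpha^\sharp$, a polynomial factorization of $f$ of exactly the structural type appearing in Theorem \ref{T:1.1}, and the uniqueness of $D^f_{\ell,V}$ there forces $B = B^f_{\ell,V}$. The main obstacle is the peeling lemma and, inside it, the isometric-multiplier property of ${\bf b}_\alpha$ on $H^2(\mathbb B_1)$; a secondary bookkeeping obstacle is maintaining the correct order of the spherical chain throughout the iteration, since (by Theorem \ref{T:5.1}) this order is rigidly determined by which element is the unique left zero of the remaining polynomial divisor at each stage.
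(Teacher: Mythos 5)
Your overall strategy coincides with the paper's: reduce to the polynomial factorization of Theorem \ref{T:1.1}, peel off Blaschke factors one at a time, and use a division ("peeling") lemma to keep the remainder in $\mathcal S_{\mathbb H}$. Your proof of the peeling lemma via the isometric-multiplier property of ${\bf b}_\alpha$ is a legitimate substitute for the paper's bare citation of the quaternionic Schwarz--Pick lemma (and is not circular, since the isometric property is established independently by slicewise reduction to the complex case). Cases (1) and (2) and the Schur-class upgrade are fine.

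The genuine gap is in case (3): you take the nodes of the Blaschke chain to be the nodes $\alpha_1,\ldots,\alpha_k$ of the polynomial divisor $D^f_{{\boldsymbol \ell},V}=\cX_V^{\kappa}\bp_{\alpha_1}\cdots\bp_{\alpha_k}$ and peel ``${\bf b}_{\alpha_1},\ldots,{\bf b}_{\alpha_k}$ in order,'' with the order ``determined by \dots the remaining polynomial divisor.'' This fails at the second step. After peeling $\mathcal B_V^{\kappa}$ and ${\bf b}_{\alpha_1}=\bp_{\alpha_1}{\bf k}_{\alpha_1}$ the actual remainder is
\[
H_2={\bf k}_{\alpha_1}^{-1}\,\bp_{\alpha_2}\cdots\bp_{\alpha_k}\,q\,h_V=(1-z\overline{\alpha}_1)\,\bp_{\alpha_2}\cdots\bp_{\alpha_k}\,q\,h_V,
\]
where $q=({\bf k}_\alpha{\bf k}_\alpha^\sharp)^{-\kappa}$ is real and zero-free in $\mathbb B_1$. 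By \eqref{16a}, the unique left zero $\gamma$ of $H_2$ in $V$ satisfies $(1-\gamma\overline{\alpha}_1)^{-1}\gamma(1-\gamma\overline{\alpha}_1)=\alpha_2$, and $\gamma=\alpha_2$ solves this only when $\alpha_2\in\C_{\alpha_1}$, which under the chain condition \eqref{2.6} forces $\alpha_2=\alpha_1$. So in general ${\bf b}_{\alpha_2}$ is \emph{not} a left factor of $H_2$ and your recursion stalls --- exactly the phenomenon of Example \ref{E:jan12}. The correct recursion (the paper's) takes the $j$-th node to be the unique left zero in $V$ of the actual remainder $H_j$, whose existence must be guaranteed by the count $m_s(V;H_jH_j^\sharp)=k-j+1$ together with Theorem \ref{T:2.1}; the resulting chain is in general different from that of $D^f_{{\boldsymbol \ell},V}$. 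The same non-commutativity (the ${\bf k}_{\beta_j}$ do not commute past $\bp_{\beta_{j+1}}$) undermines your uniqueness argument: expanding a competing $B=\mathcal B_V^{\kappa}{\bf b}_{\beta_1}\cdots{\bf b}_{\beta_k}$ does not yield a polynomial factorization with the same nodes $\beta_j$, so uniqueness does not reduce directly to Theorem \ref{T:1.1}; it should instead be extracted from the uniqueness of the left zero of each successive remainder.
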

\begin{proof}
If we only assume that $f\in\mathcal H_1$, then all statements are true with some $H_V,G_V\in\mathcal H_1$ having no zeros in $V$.
Indeed, for case (1) we apply Theorem \ref{T:1.1} and get the desired factorization
$$
f=\bp_x^\pi h_V={\bf b}_x^\pi H_V \quad\mbox{with}\quad H_V(z)=(1-zx)^\pi h_V. 
$$
For case (2) we again use Theorem \ref{T:1.1} and pick any $\alpha\in V$ to get the factorization 
\begin{equation}
f=\cX_V^\kappa h_V=\mathcal B_{V}^{\kappa}H_V \quad\mbox{with}\quad H_V(z)=(1-z(\alpha+\overline{\alpha})+z^2|\alpha|^2)^\kappa h_V
\label{1.1hyp}
\end{equation}
as in \eqref{1.7hyp}. We will process the third case in two steps: we first factor $f$ as in \eqref{1.1hyp}, but this time we can only guarantee
that $V$ is not a spherical zero of $H_V$. However $m_s(V,H_VH_V^\sharp)=k>0$, and therefore, $H_V$ has a unique left zero $\alpha_1$ in $V$ 
which can be found as suggested in Theorem \ref{T:2.1}. The we carry out the following recursion: letting $H_1:=H_V$ we factor
$H_{j}={\bf b}_{\alpha_j}H_{j+1}$, where $\alpha_j$ is the only left zero of $H_j$ in $V$, for $j=1,\ldots,k$. For each $j$, the power series
$H_j$ belongs to $\mathcal H_1$ and has no spherical zero at $V$ (as $H_j$ is a factor of $H_V$). Furthermore, 
$$
m_s(V;H_jH_j^\sharp)=k-j+1,
$$
and hence $H_j$ has a unique left zero in $V$ for $j=1,\ldots,k$. Since $m_s(V;H_{k+1}H_{k+1}^\sharp)=0$, 
the power series $H_{k+1}$ has no zeros in $V$. As the result of the latter 
recursion we get the factorization 
\begin{equation}
f=\mathcal B_{V}^{\kappa}H_V=\mathcal B_{V}^{\kappa}{\bf b}_{\alpha_{1}}\cdots {\bf b}_{\alpha_{k}}H_{k+1}.
\label{1.2hyp}
\end{equation}
The elements $(\alpha_1,\ldots,\alpha_k)$ form a spherical chain in $V$ (see \eqref{2.6}), since the equality $\alpha_j=\overline{\alpha}_{j+1}$
would have produced the factor ${\bf b}_{\alpha_{j}}{\bf b}_{\alpha_{j}}={\bf b}_{\alpha_{j}}{\bf b}_{\alpha_{j}}^\sharp=\mathcal B_V$ in \eqref{1.2hyp}
which is impossible as $H_V$ has no spherical zero at $V$, by construction. Since $H_{k+1}$ has no zeros in $V$, the representation
\eqref{1.2hyp} justifies the first representation in \eqref{1.7hyp} for the case (3). The second representation with $B^f_{{\boldsymbol r}, V}$
of the form \eqref{4.15hyp} is justified similarly. 

\smallskip

A remarkable property of Schur-class power series is that if $f\in \mathcal S_{\mathbb H}$ has a left zero at $\alpha\in\mathbb B_1$, 
then  the power series $h$ appearing in the factorization $f={\bf b}_\alpha h$ also belongs to $\mathcal S_{\mathbb H}$. This is the 
(left) quaternionic version (not the most general though -- see \cite{bige,acs1}) of the classical Schwarz-Pick lemma. The right version 
follows from the left one upon making use of the relation \eqref{19}: {\em if $f\in \mathcal S_{\mathbb H}$  has a right zero 
at $\beta\in\mathbb B_1$, then $f=g{\bf b}_\beta$, where $g\in\mathcal S_{\mathbb H}$}. 

\smallskip

By recursively applying the left and the right Schwarz-Pick lemma to factorizations \eqref{1.7hyp} of $f\in \mathcal S_{\mathbb H}$, 
we conclude that $H_V$ and $G_V$ belong to $\mathcal S_{\mathbb H}$ which completes the proof of the theorem.  \qed
\end{proof}
In the next section we will get back to the zero structure of finite Blaschke products. As in the polynomial case, ``canonical" factorizations of 
a finite Blaschke product \eqref{9.6} exist in two cases: (1) when $B\in\R[[z]]$ is the product of real and spherical Blaschke factors,
and (2) when the nodes $(\alpha_1,\ldots,\alpha_n)$ in \eqref{9.6} form a spherical chain, in which case the factorization \eqref{9.6}
is unique. Otherwise, a factorization \eqref{9.6} for $B$ is largely non-unique.
It seems useful to have intrinsic characterizations of finite Blaschke products which do not appeal to their factorizations.
We start with the converse to Proposition \ref{P:8.1}: if $f\in\mathcal H_R$ (for some $R>1$) is such that $|f^{\bl}(\gamma)|=1$ whenever $|\gamma|=1$,
then $f$ is a finite Blaschke product. Actually, the following stronger result (the quaternionic analogue of a Fatou result \cite{fatou}) holds true.
\begin{theorem}
Let $f\in\mathcal H_1$ be such that 
\begin{equation}
\lim_{|\gamma|\to 1^-}|f^{\bl}(\gamma)|=1, \quad\mbox{or equivalently,}\quad \lim_{|\gamma|\to 1^-}|f^{\br}(\gamma)|=1.
\label{126}
\end{equation}
Then $f$ is a finite Blaschke product.
\label{T:8.2}
\end{theorem}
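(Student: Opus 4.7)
The plan is to factor $f$ as a finite Blaschke product times a zero-free Schur-class power series, and then to show that the zero-free factor is a unimodular constant. First, I would verify that $f\in\mathcal S_{\mathbb H}$. On any slice $\C_\alpha$, Lemma \ref{L:j1} writes $f=g+h\beps$ with $g,h$ complex analytic on the slice disk, so that $|f^{\bl}|^2=|g|^2+|h|^2$ is subharmonic with uniform boundary limit $1$; the subharmonic maximum principle then yields $|f^{\bl}|\le 1$ throughout $\mathbb B_1$. Second, I would show that the zero set of $f$ meets only finitely many similarity classes: the complex zeros of $ff^\sharp\in\R[[z]]$ in $\mathbb B_1\cap\C$ cannot accumulate in the interior (as $f\not\equiv 0$), and they cannot accumulate on the boundary circle either, since Theorem \ref{T:2.1} would then supply zeros $\gamma_n$ of $f$ with $|\gamma_n|\to 1$, contradicting \eqref{126}. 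Iterating Theorem \ref{T:1.1h} across these finitely many classes then yields a factorization $f=B\cdot H$ with $B$ a finite Blaschke product and $H\in\mathcal S_{\mathbb H}$ having no zeros in $\mathbb B_1$.

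The main obstacle is the third step: showing that any such zero-free $H$ with $|H^{\bl}(\gamma)|\to 1$ on the boundary must be a unimodular constant. The strategy is to build a formal inverse for $H$ and push the boundary hypothesis through it. Since $H$ is zero-free, $HH^\sharp\in\R[[z]]$ is nonvanishing on $\mathbb B_1\cap\C$, so $(HH^\sharp)^{-1}\in\mathcal H_1$ and $H^{-1}:=H^\sharp(HH^\sharp)^{-1}\in\mathcal H_1$ is a two-sided formal inverse. Applying \eqref{16a} to the identity $(H^{-1}H)^{\bl}(\gamma)=1$ yields $H^{\bl}(\gamma')=((H^{-1})^{\bl}(\gamma))^{-1}$ for some $\gamma'\in[\gamma]$ with $|\gamma'|=|\gamma|$, which transfers \eqref{126} into $|(H^{-1})^{\bl}(\gamma)|\to 1$ uniformly as $|\gamma|\to 1^-$. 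A second slice-plus-subharmonic application of the maximum principle then gives $|(H^{-1})^{\bl}|\le 1$ throughout $\mathbb B_1$. On the other hand, for real $x\in(-1,1)$ evaluation is multiplicative (Remark \ref{R:im7}), so $|(H^{-1})^{\bl}(x)|=1/|H^{\bl}(x)|\ge 1$. These two estimates force $|(H^{-1})^{\bl}(x)|\equiv 1$ on the real diameter, and Lemma \ref{L:im3} applied at $x=0$ then collapses $H^{-1}$, and hence $H$, to a constant of unit modulus.

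Finally, the two forms of the hypothesis in \eqref{126} are equivalent via identity \eqref{19} applied to $f^\sharp$, so treating the left-valued case suffices.
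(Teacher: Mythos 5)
Your overall architecture is the paper's: establish $f\in\mathcal S_{\mathbb H}$ by a maximum principle on slices, show the zero set meets only finitely many similarity classes, factor off a finite Blaschke product $B$ so that $f=BH$ with $H$ zero-free, and then kill $H$ by comparing it with its formal inverse at real points via Remark \ref{R:im7} and Lemma \ref{L:im3}. Those steps are sound, and your shortcut $H^{\bl}(\gamma')=\big((H^{-1})^{\bl}(\gamma)\big)^{-1}$ is a clean way to transfer the boundary condition from $H$ to $H^{-1}$.

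There is, however, one genuine gap, and it sits exactly where the paper works hardest: you never prove that the zero-free factor $H$ satisfies $\lim_{|\gamma|\to 1^-}|H^{\bl}(\gamma)|=1$; you simply describe $H$ as ``any such zero-free $H$ with $|H^{\bl}(\gamma)|\to 1$.'' The left composition formula \eqref{16a} applied to $f=BH$ gives $|f^{\bl}(\gamma)|=|B^{\bl}(\gamma)|\cdot|H^{\bl}(\gamma')|$ only at the conjugated points $\gamma'=B^{\bl}(\gamma)^{-1}\gamma B^{\bl}(\gamma)$, and there is no reason the map $\gamma\mapsto\gamma'$ covers a full neighborhood of the boundary sphere; so the hypothesis \eqref{126} controls $|H^{\bl}|$ only on the (possibly proper) range of that map. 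The paper's way out is to use the \emph{right} evaluation formula \eqref{16b}, in which the rightmost factor of $f=Bg$ is evaluated at $\gamma$ itself, giving $|f^{\br}(\gamma)|\le|g^{\br}(\gamma)|$ directly; but this requires first knowing that $f$ satisfies the right-hand version of \eqref{126}, i.e., the equivalence of the two conditions in \eqref{126} for the \emph{same} $f$ — a nontrivial fact the paper proves via the identity \eqref{127}. Your appeal to \eqref{19} does not supply this: \eqref{19} relates $f^{\br}$ to $(f^\sharp)^{\bl}$, so it legitimately reduces the right-hypothesis case of the theorem to the left-hypothesis case applied to $f^\sharp$, but it does not show that a fixed $f$ satisfying the left limit also satisfies the right limit (recall the paper's degree-two example where $|B^{\bl}(\gamma)|\neq|B^{\br}(\gamma)|$). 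To close the gap you either need an analogue of \eqref{127}, or an argument that the intertwining map $\gamma\mapsto\gamma'$ is onto each sphere $\{|\gamma|=r\}$ near the boundary; neither is in your write-up.
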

\begin{proof}
By Lemma \ref{L:im3}, the function $\varphi(r)=\max_{|z|=r}|f^{\bl}(\gamma)|$ is non-decreasing.
Therefore, we conclude from the first condition in \eqref{126} that $|f^{\bl}(\gamma)|\le 1$ for all $\gamma\in\mathbb B_1$ 
and hence $f\in\mathcal S_{\mathbb H}$. Alternatively, using the right-sided version in Lemma \ref{L:im3}, we
may derive from the second equality in \eqref{126} that $|f^{\br}(\gamma)|\le 1$ for all $\gamma\in\mathbb B_1$ with the same conclusion
that $f\in\mathcal S_{\mathbb H}$.

\smallskip

The equivalence of two conditions in \eqref{126} is not immediate, since in general, $|f^{\bl}(\gamma)|\neq |f^{\br}(\gamma)|$.
While proving this equivalence, we may (and will) assume that $f$ belongs to $\mathcal S_{\mathbb H}$.
We start with the equality 
\begin{equation}
|f^{\bl}(\gamma)|^2+|f^{\bl}(\overline\gamma)|^2=|f^{\br}(\gamma)|^2+|f^{\br}(\overline{\gamma})|^2
\label{127}
\end{equation}
holding for any $\gamma\in\mathbb B_1$. To see the latter, we fix $\gamma\in\mathbb B_1$ and divide 
$f$ by $\cX_{[\gamma]}$ with the remainder:
$$
f(z)=\cX_{[\gamma]}(z)g(z)+cz+d,\quad c,d\in\mathbb H, \; \; g\in\mathcal H_1.
$$ 
Then we evaluate the latter equality at $\gamma$ and $\overline\gamma$ on the left and the right arriving at
$$
f^{\bl}(\gamma)=\gamma c+d,\quad f^{\bl}(\overline\gamma)=\overline\gamma c+d,\quad f^{\br}(\gamma)=
c \gamma +d,\quad f^{\br}(\overline\gamma)=c\overline\gamma +d.
$$
Making use of the latter equalities, we verify \eqref{127} as follows:
$$
|f^{\bl}(\gamma)|^2+|f^{\bl}(\overline\gamma)|^2=2|\gamma|^2|c|^2+2|d|^2+4\Re(\gamma)\Re(c\overline{d})
=|f^{\br}(\gamma)|^2+|f^{\br}(\overline\gamma)|^2.
$$
Let us now assume that the first condition in \eqref{126} is in force. By a compactness argument, for any 
$\varepsilon>0$, there is $\delta>0$ such that 
\begin{equation}
|f^{\bl}(\gamma)|>1-\varepsilon,\quad\mbox{whenever}\quad  1-\delta<|\gamma|<1.
\label{128}
\end{equation}
Keeping in mind that $|f^{\br}(\beta)|\le 1$ for all $\beta\in\mathbb B_1$ and assuming in addition that $\varepsilon<\frac{2}{7}$
(for the last step in the computation below) we conclude from \eqref{127} and \eqref{128}
\begin{align*}
1\ge |f^{\br}(\gamma)|^2&\ge |f^{\br}(\gamma)|^2+|f^{\br}(\overline{\gamma})|^2-1\\
&=|f^{\bl}(\gamma)|^2+|f^{\bl}(\overline\gamma)|^2-1\ge 2(1-\varepsilon)^2-1>(1-3\varepsilon)^2,
\end{align*}
from which the second equality in \eqref{126} follows. The converse implication in \eqref{126} is verified similarly.

\smallskip

It follows from \eqref{128} that the power series $ff^\sharp\in\R[[z]]$ has no zeros on the annulus $1-\delta<|z|<1$ and hence, 
it has finitely many zeros in $\D=\mathbb B_1\bigcap \C$ (otherwise, $ff^\sharp\equiv 0$ which contradicts \eqref{126}). 
Since $ff^\sharp$ is real, its zero set (in $\mathbb B_1$) is the union of finitely many similarity classes $V_1,\ldots,V_m$.

\smallskip

By Theorem \ref{T:1.1h}, $f$ can be factored as $f=B^f_{V_1} g_1$, where $B^f_{V_1}$ is the finite Blaschke product having all
zeros in $V_1$ and $g_1\in\mathcal S_{\mathbb H}$ has no zeros in $V_1$ but has zeros in $V_2,\ldots,V_m$. We then recursively apply
Theorem \ref{T:1.1h} to get Schur-class power series $g_2,\ldots, g_m$ from factorizations
$$
g_{k}=B^{g_k}_{V_{k+1}}g_{k+1}\quad\mbox{for}\quad k=1,\ldots,m-1.
$$
Note that all zeros of $g_k$ are contained in $V_{k+1},\ldots,V_m$ and that $g_m\in\mathcal S_{\mathbb H}$ has no zeros in $\mathbb B_1$.
Combining all the above factorizations gives
\begin{equation}
f=Bg,\quad\mbox{where}\quad B=B^f_{V_1}B^{g_1}_{V_2}B^{g_2}_{V_3}\cdots B^{g_{m-1}}_{V_m} \; \; \mbox{and} \; \; g=g_m.
\label{jan18}
\end{equation}
Thus, the Schur-class power series $g$ in \eqref{jan18} has no zeros in $\mathbb B_1$. We next show that it satisfies the same 
conditions \eqref{126} as $f$, i.e., 
\begin{equation}
\lim_{|\gamma|\to 1^-}|g^{\bl}(\gamma)|=1=\lim_{|\gamma|\to 1^-}|g^{\br}(\gamma)|.
\label{129}
\end{equation}
Since the equalities in \eqref{129} are equivalent, it is enough to verify the second one. Let us assume that this second equality
fails to be true. Then there is $\varepsilon>0$ and a sequence $\{\gamma_k\}\subset \mathbb B_1$ such that $|\gamma_k|\to 1$ as $k\to\infty$,
and $|g^{\br}(\gamma_k)|<1-\varepsilon$ for all $k\ge 1$. Then we also have from \eqref{jan18}
$|f^{\br}(\gamma_k)|<1-\varepsilon$ for all $k\ge 1$. Indeed, if $g^{\br}(\gamma_k)=0$, then $|f^{\br}(\gamma_k)|=0$, by formula \eqref{16b}.
If $g^{\br}(\gamma_k)\neq 0$, then by Proposition \ref{P:8.1} and again by formula \eqref{16b}, we have
$$
|f^{\br}(\gamma_k)|=|B^{\br}\big(g^{\br}(\gamma_k)\gamma_k g^{\br}(\gamma_k)^{-1}\big)g^{\br}(\gamma_k)|<|g^{\br}(\gamma_k)|<1-\varepsilon.
$$
The latter inequality contradicts the main assumption \eqref{126}, which completes the proof of \eqref{129}. As a consequence of 
\eqref{129} we also have 
\begin{equation}
\lim_{|\gamma|\to 1^-}|(gg^\sharp)(\gamma)|=1.
\label{130}
\end{equation}
Indeed, it follows from \eqref{129} that for any $\varepsilon>0$, there is $\delta>0$ such that 
$|g^{\bl}(\gamma)|>1-\varepsilon$ {\em and} $|g^{\br}(\gamma)|>1-\varepsilon$ whenever $1-\delta<|\gamma|<1$.
Since $|g^{\bl}(\gamma)^{-1}\gamma g^{\bl}(\gamma)|=|\gamma|$, we have, by \eqref{16a} and \eqref{19},
\begin{align*}
|(gg^\sharp)(\gamma)|&=|g^{\bl}(\gamma)|\cdot |g^{\sharp\bl}\big(g^{\bl}(\gamma)^{-1}\gamma g^{\bl}(\gamma)\big)|\\
&=|g^{\bl}(\gamma)|\cdot |g^{\br}\big(\overline{g^{\bl}(\gamma)^{-1}\gamma g^{\bl}(\gamma)}\big)|>(1-\varepsilon)^2
>1-2\varepsilon,
\end{align*}
whenever $1-\delta<|\gamma|<1$, which justifies \eqref{130}.

\smallskip

Since $g$ has no zeros in $\mathbb B_1$, its formal inverse $g^{-1}:=\frac{1}{g}$ also belongs to $\mathcal H_1$,
and hence it can be evaluated on the left and on the right at every $\gamma\in\mathbb B_1$. From the first equality in \eqref{2.10} 
we see that 
$$
\frac{1}{g}=g^\sharp(gg^\sharp)^{-1}=(gg^\sharp)^{-1}g^\sharp,
$$ 
which together with \eqref{129} and \eqref{130} leads us to 
\begin{equation}
\lim_{|\gamma|\to 1^-}\left|\left(\frac{1}{g}\right)^{\bl}(\gamma)\right|=1=\lim_{|\gamma|\to 1^-}\left|\left(\frac{1}{g}\right)^{\br}(\gamma)\right|.
\label{131}
\end{equation}
Since $f\in\mathcal H_1$, we conclude from \eqref{131} by the maximum modulus principle (as we did for $f$ at the beginning of the proof)
that $\frac{1}{g}$ belongs to the Schur class $\mathcal S_\mathbb H$. Therefore, $\left|\left(\frac{1}{g}\right)^{\bl}(\gamma)\right|<1$
for all $\gamma\in\mathbb B_1$. Since the evaluation functionals are multiplicative (and equal) at real points (see Remark \ref{R:im7}),
we have $\frac{1}{g}(x)=g(x)^{-1}$ for any $x\in\mathbb B_1\bigcap \R=(-1,1)$. Therefore, for each $x\in(-1,1)$, we have 
$$
|g(x)|\le 1\quad\mbox{and}\quad \left|\left(\frac{1}{g}\right)(x)\right|=|g(x)|^{-1}\le 1
$$
which implies that $|g(x)|$ attains relative maximum inside $\mathbb B_1$ and therefore, it is a unimodular constant, by Lemma \ref{L:im3}.
We now get back to the formula \eqref{jan18}. Since $B$ is a finite Blaschke product and $g$ is  unimodular constant, it follows 
that $f=Bg$ is a finite Blaschke product.\qed
\end{proof}
In the classical complex setting, the proof of Theorem \ref{T:8.2} is substantially shorter: by the maximum modulus principle, condition 
\eqref{126} implies that $f$ is a Schur-class function with finitely many zeros; splitting off all zeros by a suitable finite Blaschke 
product $B$, we arrive at a Schur-class function $g=B^{-1}f$ having no zeros in $\D$ and satisfying condition \eqref{129}. Then its 
reciprocal is analytic in $\D$ and satisfies the same condition \eqref{131}. Then $1/g$ belongs to $\mathcal S$, by the maximum modulus 
principle. Then $|g(z)|=1$ for all $z\in\D$ and hence, $g$ is a unimodular constant, again by the maximum modulus principle. 

\smallskip

We conclude this section with another characterization of finite Blaschke products. To present it,
let us recall that an element $\alpha\in\bH$ is said to be a {\em right eigenvalue} of a matrix $A\in\mathbb H^{n\times n}$
if $A{\bf x}={\bf x}\alpha$ for some nonzero ${\bf x}\in{\mathbb H}^{n\times 1}$. In this case,
for any $\beta=h^{-1}\alpha h\sim \alpha$ we also have $A{\bf x}h=  {\bf x}hh^{-1}\alpha h={\bf x}h\beta$
and hence, any element in the similarity class $[\alpha]$ is a right eigenvalue of $A$. Therefore,
the right spectrum $\sigma_{\bf r}(A)$ of $A$ is the union of disjoint conjugacy classes (some of which
may be real singletons). 
It follows from the (quaternionic) canonical Jordan form  \cite{wieg} that 
${\displaystyle\lim_{k\to \infty} A^k}= 0$ if and only if $\sigma_{\bf r}(A)\subset\mathbb B_1$, in which case the matrix $A$ is called 
{\em stable}.	
\begin{theorem}
Given a unitary matrix $\sbm{A & B \\ C & D}\in\mathbb H^{(n+1)\times(n+1)}$ with $D\in\mathbb H$ and
$\sigma_{\bf r}(A)\subset\mathbb B_1$, the power series
\begin{equation}
f(z)=D+zC(I-zA)^{-1}B=D+\sum_{k=1}^\infty z^kCA^{k-1}B.
\label{132}
\end{equation}
is a Blaschke product of degree $n$. Conversely, any finite Blaschke product $f$ admits a stable unitary realization
of the form \eqref{132}.
\label{L:feb2}
\end{theorem}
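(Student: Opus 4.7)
The plan is to handle the two implications separately via realization-theoretic ideas adapted to the quaternionic non-commutative setting.

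\emph{Forward direction.} Since $\sigma_{\mathbf r}(A)\subset\mathbb B_1$, the quaternionic canonical Jordan form of \cite{wieg} gives spectral radius $r(A)<1$, so $(I-zA)^{-1}=\sum z^kA^k$ lies in $\mathcal H_R$ for some $R>1$, and hence so does $f$; in particular, $f^{\br}$ extends continuously to $\overline{\mathbb B}_1$. The crux is to show $|f^{\br}(\gamma)|=1$ whenever $|\gamma|=1$ and then invoke Theorem \ref{T:8.2}. Stability of $A$ ensures convergence of
\[
\eta=\sum_{k=0}^\infty A^kB\gamma^k\in\mathbb H^n,
\]
which satisfies the Sylvester-type identity $\eta=B+A\eta\gamma$ and yields the tidy formula $f^{\br}(\gamma)=D+C\eta\gamma$. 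For $|\gamma|=1$, I would expand $|f^{\br}(\gamma)|^2=(\bar D+\bar\gamma\eta^*C^*)(D+C\eta\gamma)$ and exploit that the real scalars $|C\eta|^2$ and $|A\eta|^2$ commute with the map $s\mapsto\bar\gamma s\gamma$. Combined with the four unitarity relations $A^*A+C^*C=I$, $A^*B+C^*D=0$, $B^*A+\bar DC=0$, $B^*B+|D|^2=1$, all cross-terms cancel, leaving $|f^{\br}(\gamma)|^2=|D|^2+B^*B=1$. Continuity then gives $|f^{\br}(\gamma)|\to 1$ as $|\gamma|\to 1^-$, so $f$ is a finite Blaschke product by Theorem \ref{T:8.2}. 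To conclude $\deg f=n$, I would verify minimality: if $CA^kv=0$ for all $k\ge 0$, the identity $A^*A+C^*C=I$ propagates $\|A^kv\|=\|v\|$ and stability forces $v=0$ (observability); controllability follows dually from $AA^*+BB^*=I$, so the state-space dimension equals the Blaschke-product degree.

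\emph{Converse.} I would argue by induction on $n=\deg f$. The base case $n=0$ is handled by the $1\times 1$ unitary $[\phi]$ for a unimodular constant $f=\phi$. For $n\ge 1$, factor $f={\bf b}_{\alpha_1}g$ with $\deg g=n-1$. A direct check shows that
\[
U_0=\sbm{\bar{\alpha}_1 & \sqrt{1-|\alpha_1|^2} \\ \sqrt{1-|\alpha_1|^2} & -\alpha_1}
\]
is a stable unitary realization of ${\bf b}_{\alpha_1}$, and the inductive hypothesis supplies such a realization $\sbm{A' & B' \\ C' & D'}$ for $g$. The cascade colligation
\[
\sbm{\widetilde A & \widetilde B \\ \widetilde C & \widetilde D}=\sbm{\bar\alpha_1 & \sqrt{1-|\alpha_1|^2}\,C' & \sqrt{1-|\alpha_1|^2}\,D' \\ 0 & A' & B' \\ \sqrt{1-|\alpha_1|^2} & -\alpha_1 C' & -\alpha_1 D'}
\]
has transfer function ${\bf b}_{\alpha_1}g=f$, factors as the product of two unitaries (each a direct sum of a $2\times 2$ unitary colligation with an identity block) and hence is unitary, and block upper-triangularity of $\widetilde A$ preserves $\sigma_{\mathbf r}(\widetilde A)\subset\mathbb B_1$.

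\emph{Main obstacle.} The principal technical difficulty is the boundary computation in the forward direction. Unlike in the complex case, the left evaluation $f^{\bl}(\gamma)$ is \emph{not} given by the closed form $D+\gamma C(I-\gamma A)^{-1}B$, because $\gamma$ need not commute with the entries of $A$. Working through the right evaluation and reducing the key identity to the real-valued invariants $|C\eta|^2$ and $|A\eta|^2$, which commute with conjugation by $\gamma$, is what makes the quaternionic non-commutativity ignorable at the decisive step.
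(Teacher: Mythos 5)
Your proposal is essentially correct and runs parallel to the paper's own proof, with two cosmetic variations. For the forward direction the paper works with the left evaluation, writing $f^{\bl}(\gamma)=D+\gamma\Upsilon(\gamma)B$ with $\Upsilon(\gamma)=\sum_k\gamma^kCA^k$ and deriving $1-|f^{\bl}(\gamma)|^2=(1-|\gamma|^2)\Upsilon(\gamma)\Upsilon(\gamma)^*$ from the row relations $I-DD^*=CC^*$, $BD^*=-AC^*$, $BB^*=I-AA^*$; you work with the right evaluation via $\eta=\sum_kA^kB\gamma^k$ and the column relations, and your cancellation does go through: from $\eta=B+A\eta\gamma$ and $A^*A+C^*C=I$ one gets, on $|\gamma|=1$, the identity $|C\eta|^2=B^*B+2\Re(B^*A\eta\gamma)$, which combined with $\overline{D}C=-B^*A$ yields $|f^{\br}(\gamma)|^2=|D|^2+B^*B=1$. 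Both routes then invoke Theorem \ref{T:8.2}. For the converse, your left-peeling induction with the cascade colligation is the mirror image of the paper's right-appending recursion \eqref{dop1} (the paper builds $f_{k+1}=f_k{\bf b}_{\alpha_{k+1}}$, you build $f={\bf b}_{\alpha_1}g$); your unitarity check by factoring the cascade into two block-diagonal unitaries is a clean substitute for the paper's direct computation of $U_{k+1}U_{k+1}^*$, and the block-triangularity argument for stability is the same.

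The one genuine gap is the degree count in the forward direction. You correctly show the realization is observable and controllable (this is exactly the paper's Lemma \ref{P:5.1}), but the jump from ``the realization is minimal of dimension $n$'' to ``$\deg f=n$'' is not free: it requires knowing that any two stable unitary (equivalently, controllable and observable) realizations of the same power series have the same state-space dimension, i.e., a state-space isomorphism theorem over $\mathbb H$. The paper does not prove this inside Theorem \ref{L:feb2} either -- it explicitly defers the degree claim and settles it in Corollary \ref{C:4.7} via Proposition \ref{P:4.10}, whose proof rests on the kernel identity $1-f^{\bl}(\alpha)\overline{f^{\bl}(\beta)}=\Upsilon(\alpha)\Upsilon(\beta)^*-\alpha\Upsilon(\alpha)\Upsilon(\beta)^*\overline{\beta}$ and the controllability of $(A^*,C^*)$. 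An alternative is to identify $\deg f$ with the codimension of $f\cdot H^2(\mathbb B_1)$ via the Takenaka basis \eqref{dop5}. Either way, this step needs to be supplied rather than asserted; as written, your sentence ``so the state-space dimension equals the Blaschke-product degree'' is the conclusion to be proved, not a consequence of minimality alone.
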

\begin{proof}
For $f$ of the form \eqref{132} and a fixed $\gamma\in\mathbb B_1$,
\begin{equation}
f^{\bl}(\gamma)=D+\gamma \Upsilon(\gamma)B,\quad\mbox{where}\quad \Upsilon(\gamma):=\sum_{k=0}^\infty \gamma^{k}CA^{k},
\label{ups}
\end{equation}
and observe that the latter series converges for any $\gamma\in\overline{\mathbb B}_1$,
due to the assumption $\sigma_{\bf r}(A)\subset\mathbb B_1$. We next use the equalities
\begin{equation}
I-DD^*=CC^*,\quad BD^*=-AC^*,\quad BB^*=I-AA^*
\label{ups1}
\end{equation}
expressing the fact that $\sbm{A & B \\ C & D}$ is unitary, and then the equality
$\alpha \Upsilon(\alpha)A=\Upsilon(\alpha)-C$ (which is immediate from \eqref{ups}) to compute for
any $\alpha,\beta\in\mathbb 1$,
\begin{align}
1-f^{\bl}(\alpha)\overline{f^{\bl}(\beta)}&=1-(D+\alpha \Upsilon(\alpha)B)(D^*+B^*\Upsilon(\beta)^*\overline{\beta})\notag\\
&=CC^*+\alpha \Upsilon(\alpha)AC^*+CA^{*}\Upsilon(\beta)^*\overline{\beta})
-\alpha \Upsilon(\alpha)(I-AA^*)\Upsilon(\beta)^*\overline{\beta}\notag\\
&=CC^*+(\Upsilon(\alpha)-C)C^*+C(\Upsilon(\beta)^*-C^*)\notag\\
&\quad-\alpha \Upsilon(\alpha)\Upsilon(\beta)^*\overline{\beta}+(\Upsilon(\alpha)-C)(\Upsilon(\beta)^*-C^*)\notag\\
&=\Upsilon(\alpha)\Upsilon(\beta)^*-\alpha\Upsilon(\alpha)\Upsilon(\beta)^*\overline{\beta}.
\label{133}
\end{align}
Letting $\beta=\alpha$ in the latter equality and taking into account that $\Upsilon(\alpha)\Upsilon(\alpha)^*$ is a real number, we get
$$
1-|f^{\bl}(\alpha)|^2=(1-|\alpha|^2)\Upsilon(\alpha)\Upsilon(\alpha)^*,
$$
from which we see that $|f^{\bl}(\alpha)|\le 1$ if $|\alpha|<1$ and $|f^{\bl}(\alpha)|=1$ whenever $|\alpha|=1$. Therefore, $f$ is a 
finite Blaschke product, by Theorem \ref{T:8.2}. To complete the proof of the first statement, it remains to show that $\deg f=n$.
This will be done in the next section.

\smallskip

To prove the converse statement, let us take $f$ in the form \eqref{9.6} with $\alpha_1,\ldots,\alpha_n\in\mathbb B_1$ and 
let us recursively introduce the matrices
$U_k=\sbm{A_k & B_k \\ C_k & D_k}\in\mathbb H^{(k+1)\times (k+1)}$ for $k=1,\ldots,n$ as follows:
\begin{align}
&A_1=\overline{\alpha}_1,\quad B_1=C_1=\sqrt{1-|\alpha_1|^2},\quad D=-\alpha_1,\label{dop2}\\
&A_{k+1}=\begin{bmatrix}A_k & \sqrt{1-|\alpha_{k+1}|^2}B_k \\ 0 & \overline{\alpha}_{k+1}\end{bmatrix},\quad B_{k+1}=\begin{bmatrix}-B_k\alpha_{k+1}\\
\sqrt{1-|\alpha_{k+1}|^2}\end{bmatrix},\label{dop1}\\
&C_{k+1}=\begin{bmatrix}C_k & \sqrt{1-|\alpha_{k+1}|^2}D_k\end{bmatrix},\quad D_{k+1}=-D_k\alpha_{k+1}.\notag
\end{align}
By construction, the matrix $A_k$ is upper triangular with diagonal entries $\alpha_1,\ldots\alpha_k$; thus, $\sigma_{\bf r}(A_k)\subset\mathbb B_1$
for all $k=1,\ldots,n$. We next observe from \eqref{dop1} the equality
$$
\begin{bmatrix}A_{k+1} & B_{k+1} \\ C_{k+1} & D_{k+1}\end{bmatrix}
\begin{bmatrix}A^*_{k+1} & C_{k+1}^* \\ B^*_{k+1} & D_{k+1}^*\end{bmatrix}=
\begin{bmatrix}A_{k}A_{k}^* &  \quad 0 \quad&  A_kC_k^*+B_kD_k^* \\ 0 &  1&  0 \\ C_kA_k^*+D_kB_k^* & 0 & C_kC_k^*+D_kD_k^*  \end{bmatrix},
$$
from which it follows that the matrix $U_{k+1}$ is unitary if $U_k$ is unitary. Since $U_1$ is clearly unitary, it follows by induction that
$U_k$ is unitary for all $k=1,\ldots,n$. Letting
$$
f_k(z):=D_k+zC_k(I-zA_k)^{-1}B_k \quad\mbox{for}\quad k=1,\ldots,n,
$$
and observing from \eqref{dop2} that 
\begin{equation}
f_1(z):=-\alpha_1+(1-|\alpha_1|^2)(1-z\overline{\alpha}_1)^{-1}={\bf b}_{\alpha_{1}}(z),
\label{dop3}
\end{equation}
we then compute, upon making use of \eqref{dop1},
\begin{align*}
f_{k+1}(z)&=D_{k+1}+zC_{k+1}(I-zA_{k+1})^{-1}B_{k+1}\notag\\ 
&=-D_k\alpha_{k+1}+z\begin{bmatrix}C_k & \sqrt{1-|\alpha_{k+1}|^2}D_k\end{bmatrix}\notag\\
&\quad\times\begin{bmatrix}(1-zA_k)^{-1} & z\sqrt{1-|\alpha_{k+1}|^2}(1-zA_k)^{-1}B_k 
(1-z\overline{\alpha}_{k+1})^{-1}\\ 0 & (1-z\overline{\alpha}_{k+1})^{-1}\end{bmatrix}\notag\\
&\quad\times \begin{bmatrix}-B_k\alpha_{k+1}\\
\sqrt{1-|\alpha_{k+1}|^2}\end{bmatrix} \notag\\
&=-D_k\alpha_{k+1}-zC_k(1-zA_k)^{-1}B_k\alpha_{k+1}+z(1-|\alpha_{k+1}|^2)D_k(1-z\overline{\alpha}_{k+1})^{-1}\notag\\
&\quad +z(1-|\alpha_{k+1}|^2)C_k(1-zA_k)^{-1}B_k(1-z\overline{\alpha}_{k+1})^{-1}\notag\\
&=\big(D_k+zC_k(I-zA_k)^{-1}B_k \big)(z-\alpha_{k+1})(1-z\overline{\alpha}_{k+1})^{-1}=f_k(z){\bf b}_{\alpha_{k+1}}(z).
\end{align*}
Combining the latter equality (for $k=1,\ldots,n-1$) with \eqref{dop3}, we conclude that 
\begin{equation}
D_n\phi+zC_n(I-zA_n)^{-1}B_n\phi={\bf b}_{\alpha_1}(z){\bf b}_{\alpha_2}(z)\cdots {\bf b}_{\alpha_n}(z)\phi=f(z),
\label{dop7}
\end{equation}
and since the matrix $\sbm{A_n & B_n\phi \\ C_n & D_n\phi}=U_n\sbm{I_n & 0 \\ 0 & \phi}$ is unitary and 
$\sigma_{\bf r}(A_n)\subset\mathbb B_1$, the proof of the converse part is complete.\qed
\end{proof}
\begin{Rk}
{\rm In the complex setting, the matrices \eqref{dop1} appeared in \cite{young}, and in the free-coordinate form, go back to 
\cite{dBR1,dBR2}. If we considers the Hardy space \eqref{dop4} as the right Hilbert $\mathbb H$-module with inner product
\begin{equation}
\langle h, \, g\rangle_{H^2(\mathbb B_1)}=\sum_{k=0}^\infty \overline{g}_kh_k,\quad\mbox{where}\quad h(z)=\sum_{k=0}^\infty h_kz^k, \; \; 
g(z)=\sum_{k=0}^\infty g_kz^k,
\label{dop6}
\end{equation}
it turns out that for any finite Blaschke product $f={\bf b}_{\alpha_1}{\bf b}_{\alpha_2}\cdots {\bf b}_{\alpha_{n}}$,
the set 
\begin{equation}
\sqrt{1-|\alpha_1|^2}{\bf k}_{\alpha_1}, \; \; \sqrt{1-|\alpha_j|^2}
{\bf b}_{\alpha_1}{\bf b}_{\alpha_2}\cdots {\bf b}_{\alpha_{j-1}}{\bf k}_{\alpha_{j}} \; \; (j=2,\ldots k)
\label{dop5}
\end{equation}
is orthonormal (in the above metric) and its right linear span coincides with the orthogonal complement to the 
right Hilbert $\mathbb H$-module $f\cdot H^2(\mathbb B_1)$. Since the latter is invariant under the operator 
$M_z$ of multiplication by $z$, its orthogonal complement is invariant under the adjoint operator (in metric \eqref{dop6})
\begin{equation}
M_z^*:=R_0: \; f(z)=\sum_{k=0}^\infty f_kz^k \mapsto f(z)=\sum_{k=0}^\infty f_{k+1}z^k.
\label{dop7a}
\end{equation}
Computing the matrices of the operator $R_0$ and of the functional $E: \, h\to h_0$ on
$(f\cdot H^2(\mathbb B_1))^\perp$ with respect to the orthonormal basis \eqref{dop5} (the quaternionic {\em Takenaka basis} \cite{take})
we come up with $A_n$ and $C_n$ constructed in \eqref{dop1}. Furthermore, the operator $R_0 M_f$ maps $\mathbb H$ into 
$(f\cdot H^2(\mathbb B_1))^\perp$, and its matrix representation with respect to the basis \eqref{dop5} equals $B_n$.
Hence, the realization \eqref{dop7} is the explicit form of the canonical de Branges-Rovnyak realization (for a finite Blaschke product).
We refer to \cite{acsbook} for further results in this fashion.}
\label{R:4.8}
\end{Rk}

\section{Finite Blaschke products with prescribed zero structure}
\setcounter{equation}{0}
Given a finite Blaschke product $f$ with all zeros contained within spheres $V_1,\ldots, V_m$, we can find its 
spherical divisors $D^f_{\ell,V_k}$, $D^f_{r,V_k}$ as in Theorem \ref{T:1.1} or hyperbolic spherical divisors $\mathcal B^f_{\ell,V_k}$, 
$\mathcal B^f_{r,V_k}$ as in Theorem \ref{T:1.1h}. In this section we will consider the inverse problem -- to construct a finite Blaschke product 
with prescribed spherical divisors. It is easy to take care of real and spherical zeros by taking 
real Blaschke factors and spherical Blaschke factors with suitable powers. All these factors are power series with real coefficients
and commute with any other power series in $\mathbb H[[z]]$. Thus, the main part of the above question is concerned about Blaschke products
with no real and spherical zeros. The case where all spherical divisors are of degree one, i.e., $\mathcal B^f_{\ell,V_k}={\bf b}_{\alpha_k}$
was handled in \cite{acs} upon making use of formula \eqref{16a} as follows: a finite Blaschke product having left zeros at given pairwise 
non-similar elements $\alpha_1,\ldots,\alpha_m\in\mathbb B_1$ can be taken in the form
$$
f={\bf b}_{\beta_1}{\bf b}_{\beta_2}\cdots {\bf b}_{\beta_m},
$$
where the nodes $\beta_1,\ldots,\beta_m$ are defined recursively by $\beta_1=\alpha_1$ 
\begin{equation}
\beta_{k}=(({\bf b}_{\beta_1}\cdots {\bf b}_{\beta_{k-1}})^{\bl}(\alpha_k))^{-1} \alpha_k  
({\bf b}_{\beta_1}\cdots {\bf b}_{\beta_{k-1}})^{\bl}(\alpha_k)
\label{jan20}
\end{equation}
for $k=2,\ldots,m$. This approach still applies to a more general case where one of the spherical divisors is 
based on a spherical chain.  To point out the difficulties arising in the case when two prescribed spherical divisors are of degree greater than one, 
we consider the following example.
\begin{Ex}
{\rm Given two non-similar and non-real $\alpha,\beta\in\mathbb B_1$, find a Blaschke product $B$ ($\deg B=4$)
with spherical divisors $\bp^2_\alpha$ and $\bp^2_\beta$ (or ${\bf b}_{\alpha}^2$ and ${\bf b}_{\alpha}^2$)}.
\label{E:jan12}
\end{Ex}
The first step in the recursion \eqref{jan20} suggests the Blaschke product
$$
{\bf b}^2_\alpha {\bf b}_{\beta_1},\quad\mbox{where}\quad \beta_1=(({\bf b}^2_\alpha)^{\bl}(\beta))^{-1}\beta (({\bf b}^2_\alpha)^{\bl}(\beta)) 
$$ 
which indeed has a left zero at $\beta$. The next step as described in \eqref{jan20} cannot be implemented since 
$({\bf b}^2_\alpha {\bf b}_{\beta_1})^{\bl}(\beta)=0$. Of course, the desired $B$ exists and turns out to be of the form 
$P={\bf b}^2_\alpha {\bf b}_{\beta_1}{\bf b}_{\beta_2}$ for an appropriately chosen $\beta_2\in[\beta]$. Finding $\beta_2$ is not that easy, however.

\smallskip

The polynomial case is easier; various algorithm for constructing quaternionic polynomials with prescribed spherical divisors can be found in 
\cite{bolapp}. As an illustrative example, it is shown in \cite[Example 5.1]{bolapp} that 
the least right common multiple of the polynomials $\bp_\alpha^2$ and $\bp^2_\beta$ is given by the formula
\begin{equation}
{\bf lrcm}(\bp_\alpha^2,\bp_\beta^2)=(z-\alpha)^2(z-\beta_1)(z-\beta_2),
\label{1.14g}
\end{equation}
where 
\begin{align*}
\beta_1&=(\beta^2-2\beta\alpha+\alpha^2)^{-1}\beta (\beta^2-2\beta\alpha+\alpha^2),\\
\beta_2&=(3\beta^2-4\beta\alpha+\alpha^2+2(\alpha-\beta)\beta_1)^{-1}
\beta(3\beta^2-4\beta\alpha+\alpha^2+2(\alpha-\beta)\beta_1).
\end{align*}
Another result from \cite{bolapp} (see Theorem 6.1 there) relevant to the present setting is the following.
\begin{theorem}
Given any $\alpha_1,\ldots,\alpha_m\in\mathbb B_1$, there exist $\beta_1,\ldots,\beta_m$ ($\gamma_i\in[\alpha_i]$) so that
the power series
$$
f=\bp_{\alpha_1}\bp_{\alpha_2}\cdots \bp_{\alpha_m}{\bf k}_{\gamma_1}\cdot {\bf k}_{\gamma_2}\ldots{\bf k}_{\gamma_m}
$$
is a finite Blaschke product (of degree $m$).
\label{T:jan1}
\end{theorem}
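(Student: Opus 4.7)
The plan is to proceed by induction on $m$. For $m=1$, set $\gamma_1:=\alpha_1$; by \eqref{9.4} the power series $\bp_{\alpha_1}{\bf k}_{\alpha_1}={\bf b}_{\alpha_1}$ is a Blaschke product of degree one.

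For the inductive step, apply the hypothesis to $\alpha_1,\ldots,\alpha_{m-1}$ to produce $\gamma_1,\ldots,\gamma_{m-1}$ with $\gamma_j\in[\alpha_j]$ and a finite Blaschke product
$g=\bp_{\alpha_1}\cdots\bp_{\alpha_{m-1}}{\bf k}_{\gamma_1}\cdots{\bf k}_{\gamma_{m-1}}$
of degree $m-1$. Writing $Q={\bf k}_{\gamma_1}\cdots{\bf k}_{\gamma_{m-1}}$ (which is a unit in $\mathbb H[[z]]$ since its constant term is $1$), the candidate power series rearranges as
$$f=g\cdot\bigl(Q^{-1}\bp_{\alpha_m}Q\bigr)\cdot{\bf k}_{\gamma_m}.$$
Since products of finite Blaschke products are again finite Blaschke products (Remark \ref{T:im5}), the task reduces to selecting $\gamma_m\in[\alpha_m]$ for which $h:=Q^{-1}\bp_{\alpha_m}Q\cdot{\bf k}_{\gamma_m}$ is itself a Blaschke factor, in which case $f=g\cdot h$ has degree exactly $m$.

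To verify this, I would invoke the Fatou-type Theorem \ref{T:8.2}: it suffices to show $|f^{\bl}(\gamma)|=1$ for every $\gamma$ with $|\gamma|=1$. Expanding $f^{\bl}(\gamma)$ by iterating \eqref{16a} along the product $\bp_{\alpha_1}\cdots\bp_{\alpha_m}{\bf k}_{\gamma_1}\cdots{\bf k}_{\gamma_m}$ produces a telescoping product of modulus evaluations, all at iterated conjugates $\gamma^{(k)}\in[\gamma]$ of unit modulus by the characterization \eqref{7}. Pairing each polynomial factor $\bp_{\alpha_j}$ with its companion ${\bf k}_{\gamma_j}$ via the identity $|{\bf b}_\alpha^{\bl}(\mu)|=1$ for $|\mu|=1$ from Proposition \ref{P:91} forces the product to telescope to $1$ provided $\gamma_m$ is defined by a recursion of the type \eqref{jan20}, namely as the conjugate of $\alpha_m$ by the partial left value $g^{\bl}(\alpha_m)$ (or an appropriate refinement); this manifestly places $\gamma_m$ in $[\alpha_m]$. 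That this $\gamma_m$ actually makes $h$ a Blaschke factor is then verified using Theorem \ref{T:2.1}: the symmetrization $hh^\sharp$ reduces, because $QQ^\sharp\in\mathbb R[[z]]$ commutes through, to a scalar multiple of $\cX_{[\alpha_m]}$, so the unique left zero of $h$ in $[\alpha_m]$ is computable via \eqref{3.1} and is precisely $\gamma_m$.

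The main obstacle, already flagged in Example \ref{E:jan12}, is that $Q^{-1}\bp_{\alpha_m}Q$ is generally not a linear polynomial -- its higher-order coefficients vanish only when the coefficients of $Q$ commute with $\alpha_m$ -- so the direct \eqref{jan20}-style recursion used for non-similar nodes does not extend naively, and the Fatou criterion does the real work of upgrading a purely existence statement into a construction. An arguably cleaner alternative route is realization-theoretic: by Theorem \ref{L:feb2}, take an upper-triangular stable unitary $A\in\mathbb H^{m\times m}$ with diagonal $\overline\alpha_1,\ldots,\overline\alpha_m$ built via \eqref{dop2}--\eqref{dop1}; the resulting $f(z)=D+zC(I-zA)^{-1}B$ is a Blaschke product of degree $m$, and expanding the resolvent $(I-zA)^{-1}$ by back-substitution (valid because $A$ is upper triangular, with denominators $(1-z\overline\alpha_j)$) exhibits $f$ in the required form $\bp_{\alpha_1}\cdots\bp_{\alpha_m}{\bf k}_{\gamma_1}\cdots{\bf k}_{\gamma_m}$, with $\gamma_j\in[\alpha_j]$ read off the block structure of $U_m$.
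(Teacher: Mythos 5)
Your base case and the algebraic rearrangement $f=g\cdot\bigl(Q^{-1}\bp_{\alpha_m}Q\bigr){\bf k}_{\gamma_m}$ are fine, but the proof stops exactly where the theorem begins. The entire content of the statement is the \emph{existence} of $\gamma_m\in[\alpha_m]$ making $h=Q^{-1}\bp_{\alpha_m}Q\cdot{\bf k}_{\gamma_m}$ a Blaschke factor, and you never establish it. Writing $\bp_{\alpha_m}Q=Q'\bp_\beta$ with $\beta$ the unique right zero of $\bp_{\alpha_m}Q$ in $[\alpha_m]$ (Theorem \ref{T:2.1}), one has $h=Q^{-1}Q'\bp_\beta{\bf k}_{\gamma_m}$, and for $h$ to be a Blaschke factor you would need the zero-free series $S$ left over after splitting off $\bp_\beta$ from $Q^{-1}\bp_{\alpha_m}Q$ (which satisfies $SS^\sharp\equiv 1$ but need not be constant) to equal ${\bf k}_\beta(1-z\overline{\gamma}_m)$ up to a unimodular constant for some $\gamma_m\in[\alpha_m]$. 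Nothing in your argument shows this is achievable; it is precisely the nontrivial claim. The appeal to Theorem \ref{T:8.2} is circular: to verify $|f^{\bl}(\gamma)|=1$ on $|\gamma|=1$ you invoke a ``telescoping'' of moduli, but the factors $\bp_{\alpha_j}$ and ${\bf k}_{\gamma_j}$ are not adjacent in the product, so iterating \eqref{16a} evaluates them at \emph{different} conjugated points of $[\gamma]$; the pairing into $|{\bf b}_{\alpha_j}^{\bl}(\cdot)|=1$ holds only for the correct $\gamma_j$'s, which is what you are trying to prove. Your own caveat --- that the \eqref{jan20}-type recursion fails (cf.\ Example \ref{E:jan12}) and needs ``an appropriate refinement'' --- concedes the gap rather than closing it.

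The alternative realization route also does not deliver the stated form. The matrices \eqref{dop2}--\eqref{dop1} realize ${\bf b}_{\alpha_1}\cdots{\bf b}_{\alpha_m}=\bp_{\alpha_1}{\bf k}_{\alpha_1}\bp_{\alpha_2}{\bf k}_{\alpha_2}\cdots$, and commuting all the linear factors to the left changes their nodes within their similarity classes, so you obtain $\bp_{\alpha_1}\bp_{\alpha_2'}\cdots\bp_{\alpha_m'}$ rather than the prescribed polynomial $\bp_{\alpha_1}\cdots\bp_{\alpha_m}$; expanding $(I-zA)^{-1}$ by back-substitution yields a sum of terms, not the required product of kernels. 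For the record, the paper does not prove Theorem \ref{T:jan1} directly either: it cites the recursive construction of \cite{bolapp} and instead proves Theorem \ref{T:1}, which starts from the \emph{given} polynomial $p=\bp_{\alpha_1}\cdots\bp_{\alpha_m}$ represented through a controllable pair $(A,{\bf v})$ as in \eqref{1.15g} (Example \ref{E:u2} uses \eqref{1.5k}), solves the Stein equation \eqref{12u}, and exhibits an explicit zero-free $R$ in \eqref{9u} with $pR$ a finite Blaschke product --- thereby only ``partly'' recovering the theorem, since identifying $R$ as a product of kernels ${\bf k}_{\gamma_i}$ with $\gamma_i\in[\alpha_i]$ requires the additional recursive analysis of \cite{bolapp}. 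That last identification is missing from your proposal as well.
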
 
Since any monic polynomial can be factored into a product \eqref{1.10} of linear factors, Theorem \ref{T:jan1} actually states that 
given any monic polynomial $p$ with all zeros inside $\mathbb B_1$, there exists a power series $G\in\mathcal H_1$ having no zeros
and such that $B=pG$ is a finite Blaschke product. The proof of the theorem is constructive and provides an algorithm
for constructing the nodes $\gamma_1,\ldots,\gamma_m$. Applying this construction to the case $\alpha_1=\alpha_2=\alpha$, $\alpha_3=\beta_1$,
$\alpha_4=\beta_2$ in \eqref{1.14g}, one can get the explicit answer for Example \ref{E:jan12}. In the general case we do the same:
to construct a finite Blaschke product with prescribed (say, left) spherical divisors, we can find the {\bf lrcm} of these divisors, 
then factor the resulting polynomial as in \eqref{1.10} (this is doable since the conjugacy classes containing all zeros of this {\bf lrcm} are known) 
and then apply the algorithm from the proof of \cite[Theorem 6.1]{bolapp}.
Still, it is desirable to get the explicit formula for $G$ and $B$ in terms of the coefficients of $p$. 
Such formulas are presented in Theorem \ref{T:1} below. We start with some needed preliminaries.

\smallskip 
\noindent
For a monic polynomial $p\in\mathbb {\mathbb H}[z]$, the associated (left) 
{\em companion matrix} is defined by
\begin{equation}
C_p=\begin{bmatrix} 0 & 0 &\ldots & 0 &-p_0\\
1 & 0 & \ldots& 0 &-p_1\\
0 & 1& \ldots &0 & -p_2 \\
\vdots & \vdots &\ddots & \vdots & \vdots \\
0 & 0 &\ldots &1 & -p_{n-1} \end{bmatrix},\quad\mbox{if} \; \;  p(z)=z^n+\sum_{k=0}^{n-1} p_{k}z^{k}.
\label{1.14e}
\end{equation}
Let ${\bf e}_k$ denotes the $k$-th column of the $n\times n$ identity matrix $I_n$ and let
\begin{equation}
F=\begin{bmatrix} {\bf e}_2 & {\bf e}_3 & \ldots & {\bf e}_n\end{bmatrix}=
\begin{bmatrix} 0&0& \ldots&0\\ 1 &0&\ldots & 0 \\ \vdots&\ddots&\ddots&\vdots\\
0 & \ldots & 1 & 0\end{bmatrix}.
\label{1.14eg}
\end{equation}
Then we clearly have
\begin{equation}
p(z)=z^n-\begin{bmatrix}1 & z & \ldots & z^{n-1}\end{bmatrix}C^n_p{\bf e}_1=z^n-
{\bf e}_1^*(I-zF^*)^{-1}C^n_p{\bf e}_1.
\label{1.15e}
\end{equation}
The right spectrum of $C_p$ coincides with the zero set of the polynomial $pp^\sharp$ and therefore, $p$ has all zeros 
inside $\mathbb B_1$ if and only if  $\sigma_{\bf r}(C_p)\subset \mathbb B_1$.

\smallskip

If $A$ is any matrix similar to $C_p$, i.e., $A=TC_p T^{-1}$ for some invertible $T\in \mathbb H^{n\times n}$,
and if ${\bf v}=T{\bf e}_1$, then the {\em controllability matrix} $\mathfrak C_{A,{\bf v}}$ of the pair $(A,{\bf v})$, 
\begin{align}
\mathfrak C_{A,{\bf v}}&:=\begin{bmatrix}{\bf v} & A{\bf v} &\ldots & A^{n-1}{\bf v}\end{bmatrix}\label{feb1}\\
&=T\begin{bmatrix}{\bf e}_1& C_p{\bf e}_1 &\ldots & C_p^{n-1}{\bf e}_1\end{bmatrix}
=T\begin{bmatrix}{\bf e}_1& {\bf e}_2&\ldots & {\bf e}_n\end{bmatrix}=T\notag
\end{align}
is invertible meaning that the pair  $(A,{\bf v})$ is {\em controllable}. We then have equalities 
$$
C^n_p{\bf e}_1=T^{-1}A^nTT^{-1}{\bf v}=\mathfrak C_{A,{\bf v}}^{-1}A^n{\bf v},
$$
which allows us to conclude that the polynomial \eqref{1.15e} can be represented as 
\begin{equation}
p(z)=z^n-{\bf e}_1^*(I-zF^*)^{-1}\mathfrak C_{A,{\bf v}}^{-1}A^n{\bf v}
\label{1.15g}
\end{equation}
for any pair $(A,{\bf v})$ similar to the pair $(C_p,{\bf e_1})$ in the sense that $A=TC_p T^{-1}$ and ${\bf v}=T{\bf e}_1$ for 
some invertible $T\in \mathbb H^{n\times n}$. 

\smallskip

Since the right spectra of similar matrices coincide,  $\sigma_{\bf r}(A)\subset\mathbb B_1$ if all zeros 
of $p(z)$ are contained $\mathbb B_1$. In this case, the Stein equation 
\begin{equation}
P_{A,{\bf v}}-AP_{A,{\bf v}}A^*={\bf v}{\bf v}^*
\label{12u}
\end{equation}
has a unique solution which is given by converging series
\begin{equation}
P_{A,{\bf v}}=\sum_{k=0}^\infty A^k{\bf v}{\bf v}^*A^{*k}.
\label{7u}
\end{equation}
Since ${\bf v}{\bf v}^*+A{\bf v}{\bf v}^*A^*+\ldots+A^{n-1}{\bf v}{\bf v}^*A^{*(n-1)}=\mathfrak C_{A,{\bf v}}$, by \eqref{feb1}, we have
\begin{equation}
P_{A,{\bf v}}=\sum_{k=0}^{n-1} A^k{\bf v}{\bf v}^*A^{*k}+A^n\sum_{k=0}^\infty A^k{\bf v}{\bf v}^*A^{*k}A^{*n}
=\mathfrak C_{A,{\bf v}}\mathfrak C_{A,{\bf v}}^*+A^n P_{A,{\bf v}}A^{*n},
\label{7s}
\end{equation}
from which it follows that $P_{A,{\bf v}}$ is positive definite. Since $\sigma_{\bf r}(A)\subset\mathbb B_1$ 
the matrix $(I-A)$ is invertible. Let us define the vector 
\begin{equation}
{\bf g}_{A,{\bf v}}=(I-A^*)P_{A,{\bf v}}^{-1}(I-A)^{-1}{\bf v}.
\label{6u}
\end{equation}
Multiplying both parts of \eqref{12u} by the matrix $(I-A^*)P_{A,{\bf v}}^{-1}(I-A)^{-1}$ on the left and by its adjoint on the right,
we get (after simple matrix manipulations) $P_{A,{\bf v}}^{-1}-A^*P_{A,{\bf v}}^{-1}A$ on the left and ${\bf g}_{A,{\bf v}}{\bf g}_{A,{\bf v}}^*$
(by \eqref{6u}) on the right. Since $\sigma_{\bf r}(A)\subset\mathbb B_1$, we then conclude from the resulting equality 
\begin{equation}
P_{A,{\bf v}}^{-1}-A^*P_{A,{\bf v}}^{-1}A={\bf g}_{A,{\bf v}}{\bf g}_{A,{\bf v}}^*
\label{feb5}
\end{equation}
that $P_{A,{\bf v}}^{-1}$ can be represented by converging series  
\begin{equation}
P^{-1}_{A,{\bf v}}=\sum_{k=0}^\infty A^{*k}{\bf g}_{A,{\bf v}}{\bf g}_{A,{\bf v}}^*A^{k}.
\label{7v}
\end{equation}
We next observe the equality
\begin{equation}
AP_{A,{\bf v}}{\bf g}_{A,{\bf v}}=-{\bf v}+{\bf v}{\bf v}^*P_{A,{\bf v}}^{-1}(I-A)^{-1}{\bf v}.
\label{8u}
\end{equation}
Indeed, by \eqref{6u}, we have
\begin{align*}AP_{A,{\bf v}}{\bf g}_{A,{\bf v}}&=AP_{A,{\bf v}}(I-A^*)P_{A,{\bf v}}^{-1}(I-A)^{-1}{\bf v}\\
&=((A-I)P_{A,{\bf v}}+{\bf v}{\bf v}^*)P_{A,{\bf v}}^{-1}(I-A)^{-1}{\bf v},
\end{align*}
which verifies \eqref{8u}. 
\begin{theorem}
Let assume that a monic polynomial $p\in\mathbb H[z]$ of the form \eqref{1.15g} has all zeros inside $\mathbb B_1$
(i.e., $\sigma_{\bf r}(A)\subset\mathbb B_1$). Then
\begin{enumerate}
\item The power series 
\begin{equation}
R(z)={\bf e}_n^*\mathfrak C_{A,{\bf v}}^{-1}P_{A,{\bf v}}(I-zA^*)^{-1}{\bf g}_{A,{\bf v}}
=\sum_{k=0}^\infty z^k{\bf e}_n^*
\mathfrak C^{-1}_{A,{\bf v}}P_{A,{\bf v}}A^{*k}{\bf g}_{A,{\bf v}},
\label{9u}
\end{equation}
where $P_{A,{\bf v}}$ and ${\bf g}_{A,{\bf v}}$ are defined as in \eqref{7u} and \eqref{6u}, has no zeros in $\mathbb B_1$
and belongs to $H^2(\mathbb B_1)$ with 
$$
\|R\|_{H^2(\mathbb B_1)}^2={\bf e}_n^*\mathfrak C^{-1}_{A,{\bf v}}P_{A,{\bf v}}\mathfrak C^{-*}_{A,{\bf v}}{\bf e_n}.
$$ 
\item The power series $\Theta(z)=p(z)R(z)$ is a finite Blaschke product. Furthermore, $\Theta$ is defined explicitly by 
\begin{align}
\Theta(z)&=1-{\bf v}^*(I-A^*)^{-1}{\bf g}_{A,{\bf v}}+z{\bf v}^*(I-zA^*)^{-1}{\bf g}_{A,{\bf v}}
\label{14u}\\
&=1-{\bf v}^*(I-A^*)^{-1}{\bf g}_{A,{\bf v}}+\sum_{k=0}^\infty z^{k+1}{\bf v}^*A^{*k}{\bf g}_{A,{\bf v}}.\notag  
\end{align}
\end{enumerate}
\label{T:1}
\end{theorem}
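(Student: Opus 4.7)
The plan is to prove the theorem in three stages. First, interpret the function $\Theta$ defined by \eqref{14u} as a state-space realization and show that, after a similarity, its coefficient matrix is unitary; Theorem \ref{L:feb2} then identifies $\Theta$ as a finite Blaschke product. Second, verify the algebraic identity $p(z)R(z) = \Theta(z)$. Third, extract the $H^2$-norm of $R$ and its zero-freeness.

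For the first stage, writing $P := P_{A,{\bf v}}$ and ${\bf g} := {\bf g}_{A,{\bf v}}$, formula \eqref{14u} presents $\Theta(z) = D_0 + z{\bf v}^*(I - zA^*)^{-1}{\bf g}$ with $D_0 := 1 - {\bf v}^*(I-A^*)^{-1}{\bf g}$. Applying the similarity $T = P^{1/2}$ (the positive square root of the positive definite matrix $P$) produces the candidate matrix
\begin{equation*}
U = \begin{bmatrix}P^{1/2}A^*P^{-1/2} & P^{1/2}{\bf g} \\ {\bf v}^*P^{-1/2} & D_0\end{bmatrix}.
\end{equation*}
The upper-left $n\times n$ blocks of $UU^*$ and $U^*U$ collapse to $I$ directly via \eqref{feb5} and \eqref{12u} respectively. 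Substituting \eqref{6u} into the definition of $D_0$ yields $D_0 = 1 - c$ with $c := {\bf v}^*P^{-1}(I-A)^{-1}{\bf v}$. The off-diagonal block of $UU^*$ vanishes iff ${\bf g}\bar{D_0} = -A^*P^{-1}{\bf v}$, which I expect to derive from \eqref{8u} after cancellations via \eqref{feb5}. The scalar condition $|D_0|^2 + {\bf v}^*P^{-1}{\bf v} = 1$, equivalently $2\Re c - |c|^2 = {\bf v}^*P^{-1}{\bf v}$, is the most delicate; I plan to verify it by sandwiching \eqref{12u} between $(I-A^*)^{-1}$ on the left and $(I-A)^{-1}$ on the right, contracted with ${\bf v}^*$ and ${\bf v}$ on the outside. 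Since $\sigma_{\bf r}(A^*) \subset \mathbb{B}_1$ (stability is preserved under adjoints, as $\|A^k\| = \|A^{*k}\|$), Theorem \ref{L:feb2} then yields that $\Theta$ is a Blaschke product of McMillan degree at most $n$.

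For the second stage, I substitute \eqref{1.15g} for $p$ and \eqref{9u} for $R$ into the product $p(z)R(z)$, expand $(I - zA^*)^{-1}$ as a geometric series, and match coefficients against $\Theta(z) = D_0 + \sum_{k\ge 0}z^{k+1}{\bf v}^*A^{*k}{\bf g}$. The constant term in $pR$ equals $-{\bf e}_1^*\mathfrak{C}_{A,{\bf v}}^{-1}A^n{\bf v} \cdot {\bf e}_n^*\mathfrak{C}_{A,{\bf v}}^{-1}P{\bf g}$, which should reduce to $D_0 = 1 - c$ by using the Cayley--Hamilton-type relation $p(A) = 0$ (equivalently, $A^n{\bf v} = \mathfrak{C}_{A,{\bf v}} C_p^n{\bf e}_1 = -\sum_{k<n}p_k A^k{\bf v}$) together with \eqref{8u}. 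Higher-order coefficients telescope analogously, exploiting the same annihilator relation to eliminate terms beyond degree $n$. The calculation is combinatorially involved but mechanical.

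For the third stage, the $H^2$-norm formula follows by squaring the coefficients of \eqref{9u} and collapsing the middle double-sum via \eqref{7v}:
\begin{equation*}
\|R\|^2_{H^2(\mathbb B_1)} = {\bf e}_n^*\mathfrak{C}_{A,{\bf v}}^{-1}P\Bigl(\sum_{k\ge 0}A^{*k}{\bf g}{\bf g}^*A^k\Bigr)P\mathfrak{C}_{A,{\bf v}}^{-*}{\bf e}_n = {\bf e}_n^*\mathfrak{C}_{A,{\bf v}}^{-1}P\mathfrak{C}_{A,{\bf v}}^{-*}{\bf e}_n.
\end{equation*}
Absence of zeros of $R$ in $\mathbb{B}_1$ follows from a degree count: $\Theta$ has McMillan (hence Blaschke) degree at most $n$, yet $\Theta = pR$ with monic $p$ of degree $n$ already supplies $n$ zeros' worth of Blaschke factors via the spherical divisors of Theorem \ref{T:1.1h}; any additional zero of $R$ would push $\deg\Theta$ strictly above $n$, a contradiction. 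The main obstacles will be the scalar identity $2\Re c - |c|^2 = {\bf v}^*P^{-1}{\bf v}$ in stage one and the coefficient-matching bookkeeping in stage two.
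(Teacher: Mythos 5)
Your Stage 1 reproduces the paper's own Step 2 essentially verbatim: the same similarity $T=P_{A,{\bf v}}^{1/2}$, the same candidate unitary matrix, and the same three block identities, which the paper checks using \eqref{feb5} and \eqref{6u} (for the scalar condition you want to sandwich \eqref{feb5}, not \eqref{12u}, between $(I-A^*)^{-1}$ and $(I-A)^{-1}$; sandwiching \eqref{12u} produces the blocks of $U^*U$ instead). The norm computation in Stage 3 is also the paper's. The two places where you diverge are Stage 2 and the zero-freeness of $R$, and the first of these is a genuine gap. The identity $p(z)R(z)=\Theta(z)$ is the computational heart of the theorem --- it is what certifies that the specific row vector ${\bf e}_n^*\mathfrak C_{A,{\bf v}}^{-1}P_{A,{\bf v}}$ in \eqref{9u} is the right one --- and ``expand, invoke Cayley--Hamilton, and let the coefficients telescope'' is a plan, not a proof. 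Already the constant term requires the quaternion-scalar identity $-\big({\bf e}_1^*\mathfrak C_{A,{\bf v}}^{-1}A^n{\bf v}\big)\big({\bf e}_n^*\mathfrak C_{A,{\bf v}}^{-1}P_{A,{\bf v}}{\bf g}_{A,{\bf v}}\big)=1-{\bf v}^*(I-A^*)^{-1}{\bf g}_{A,{\bf v}}$, which is nowhere near mechanical (and noncommutativity makes the bookkeeping for higher coefficients delicate). The paper obtains all coefficients at once by first proving the vector-valued resolvent identity \eqref{3u}, converting ${\bf v}\,p(z)R(z)$ into ${\bf v}{\bf e}_1^*(I-zF^*)^{-1}\mathfrak C_{A,{\bf v}}^{-1}(zI-A)P_{A,{\bf v}}(I-zA^*)^{-1}{\bf g}_{A,{\bf v}}$ via \eqref{feb11}, and then collapsing $(zI-A)P_{A,{\bf v}}(I-zA^*)^{-1}{\bf g}_{A,{\bf v}}={\bf v}\Theta(z)$ using \eqref{12u} and \eqref{8u}. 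Some such device --- the left factor ${\bf v}$ turning a scalar identity into a rank-one vector identity amenable to resolvent calculus --- is what actually makes the verification tractable; until you supply it, part (2) is unproved, and with it the formula \eqref{14u} on which your Stage 1 rests.

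Your zero-freeness argument for $R$ is a genuinely different route from the paper's and is worth comparing. The paper exhibits an explicit polynomial inverse $G$ of $R$ (formula \eqref{feb10}) and verifies $GR=1$, which requires no degree theory at all and hands you the reciprocal of $R$ in closed form. You instead count degrees: from $\Theta\Theta^\sharp=(pp^\sharp)(RR^\sharp)$ and the fact that $pp^\sharp$ already accounts for all $2n$ complex zeros of $\Theta\Theta^\sharp$ in $\D$, the real power series $RR^\sharp$, hence $R$, has no zeros. This is valid, but it silently uses that the Blaschke product produced by a size-$n$ stable unitary realization has degree at most $n$ --- precisely the claim whose proof the paper defers at the end of the proof of Theorem \ref{L:feb2} and completes only in Corollary \ref{C:4.7} via Proposition \ref{P:4.10}. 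There is no circularity, since that corollary does not depend on Theorem \ref{T:1}, but you should cite it explicitly rather than the bare first sentence of Theorem \ref{L:feb2}; the paper's explicit-inverse route avoids the dependency altogether.
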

\begin{proof}
Since $\sigma_{\bf r}(A)\subset\mathbb B_1$, the power series $R$ clearly belongs to $\mathcal H_1$. By the definition
of the norm in $H^2(\mathbb B_1)$, we have from \eqref{9u} and \eqref{7v},
\begin{align*}
\|R\|_{H^2(\mathbb B_1)}^2&=\sum_{k=0}^\infty |{\bf e}_n^*
\mathfrak C^{-1}_{A,{\bf v}}P_{A,{\bf v}}A^{*k}{\bf g}_{A,{\bf v}}|^2\\
&={\bf e}_n^* \mathfrak C^{-1}_{A,{\bf v}}P_{A,{\bf v}}
\bigg(\sum_{k=0}^\infty A^{*k}{\bf g}_{A,{\bf v}}{\bf g}_{A,{\bf v}}^*A^{k}\bigg)P_{A,{\bf v}}\mathfrak C^{-*}_{A,{\bf v}}{\bf e}_n\\
&={\bf e}_n^*\mathfrak C^{-1}_{A,{\bf v}}P_{A,{\bf v}}\mathfrak C^{-*}_{A,{\bf v}}{\bf e_n}.
\end{align*}
We break the rest of the proof into three (quite independent) steps.

\medskip
\noindent
{\bf Step 1:} {\em The product $\Theta=pR$ is indeed of the form \eqref{14u}}.  

\smallskip

To this end, we first verify that 
\begin{equation}
{\bf v}p(z)=(zI-A)\mathfrak C_{A,{\bf v}}(I-zF^*)^{-1}\big({\bf e}_n-F^*\mathfrak C_{A,{\bf v}}^{-1}A^n{\bf v}\big).
\label{3u}
\end{equation}
To this end we observe the equalities 
\begin{equation}
A\mathfrak C_{A,{\bf v}}-\mathfrak C_{A,{\bf v}}F=A^n{\bf v}{\bf e}_n^*,\quad
\mathfrak C_{A,{\bf v}}-A\mathfrak C_{A,{\bf v}}F^*={\bf v}{\bf e}_1^*.
\label{4u}
\end{equation}
which follow directly from the definition \eqref{feb1} of $\mathfrak C_{A,{\bf v}}$. 
Multiplying the second equality in \eqref{4u} by $z(I-zF^*)^{-1}$ on the right and making use of the identity 
\begin{equation}
zF^*(I-zF^*)^{-1}=(I-zF^*)^{-1}-I
\label{5u}
\end{equation}
we get 
\begin{equation}
(zI-A)\mathfrak C_{A,{\bf v}}(I-zF^*)^{-1}=-A\mathfrak C_{A,{\bf v}}+z{\bf v}{\bf e}_1^*(I-zF^*)^{-1}.
\label{p6u}
\end{equation}
Taking into account \eqref{4u}, \eqref{5u}, \eqref{p6u} as well as the equalities 
$$
\mathfrak C_{A,{\bf v}}{\bf e}_n=A^{n-1}{\bf v},\quad {\bf e}_1^*(I-zF^*)^{-1}{\bf e}_n=z^{n-1},
$$
we simplify the right side expression in \eqref{3u} as follows:
\begin{align*}
&(zI-A)\mathfrak C_{A,{\bf v}}(I-zF^*)^{-1}\left({\bf e}_n-F^*\mathfrak C_{A,{\bf v}}^{-1}A^n{\bf v}\right)\\
&=\left(-A\mathfrak C_{A,{\bf v}}+z{\bf v}{\bf e}_1^*(I-zF^*)^{-1}\right)
\left({\bf e}_n-F^*\mathfrak C_{A,{\bf v}}^{-1}A^n{\bf v}\right)\\
&=-A^n{\bf v}+z^n{\bf v}+(\mathfrak C_{A,{\bf v}}-{\bf v}{\bf e}_1^*)\mathfrak C_{A,{\bf v}}^{-1}A^n{\bf v}
-{\bf v}{\bf e}_1^*((I-zF^*)^{-1}-I)\mathfrak C_{A,{\bf v}}^{-1}A^n{\bf v}\notag\\
&=z^n{\bf v}-{\bf v}{\bf e}_1^*(I-zF^*)^{-1}\mathfrak C_{A,{\bf v}}^{-1}A^n{\bf v}={\bf v}p(z),
\end{align*}
which verifies \eqref{3u}. We next recall the first equality in \eqref{4u} and observe that 
${\bf e}_n{\bf e}_n^*=I-F^*F$ to conclude that
\begin{align}
\big({\bf e}_n-F^*\mathfrak C_{A,{\bf v}}^{-1}A^n{\bf v}\big){\bf e}_n^*\mathfrak C_{A,{\bf v}}^{-1}&=
(I-F^*F)\mathfrak C_{A,{\bf v}}^{-1}-F^*\mathfrak C_{A,{\bf v}}^{-1}(A\mathfrak C_{A,{\bf v}}-\mathfrak C_{A,{\bf v}}F)
\mathfrak C_{A,{\bf v}}^{-1}\notag\\
&=\mathfrak C_{A,{\bf v}}^{-1}-F^*\mathfrak C_{A,{\bf v}}^{-1}A.\label{feb11}
\end{align}
Combining the latter equality with \eqref{3u} and making use of \eqref{p6u} and of the second equality in \eqref{4u} we get
\begin{align*}
{\bf v}p(z){\bf e}_n^*\mathfrak C_{A,{\bf v}}^{-1}&=(zI-A)\mathfrak C_{A,{\bf v}}(I-zF^*)^{-1}
\big(\mathfrak C_{A,{\bf v}}^{-1}-F^*\mathfrak C_{A,{\bf v}}^{-1}A\big)\\
&=-A\mathfrak C_{A,{\bf v}}\big(\mathfrak C_{A,{\bf v}}^{-1}-F^*\mathfrak C_{A,{\bf v}}^{-1}A\big)\\
&\quad+z{\bf v}{\bf e}_1^*(I-zF^*)^{-1}\big(\mathfrak C_{A,{\bf v}}^{-1}-F^*\mathfrak C_{A,{\bf v}}^{-1}A\big)\\
&=-{\bf v}{\bf e}_1^*\mathfrak C_{A,{\bf v}}^{-1}A
+z{\bf v}{\bf e}_1^*(I-zF^*)^{-1}\big(\mathfrak C_{A,{\bf v}}^{-1}-F^*\mathfrak C_{A,{\bf v}}^{-1}A\big)\\
&={\bf v}{\bf e}_1^*(I-zF^*)^{-1}\big(-(I-zF^*)\mathfrak C_{A,{\bf v}}^{-1}A+z
\big(\mathfrak C_{A,{\bf v}}^{-1}-F^*\mathfrak C_{A,{\bf v}}^{-1}A\big)\big)\\
&={\bf v}{\bf e}_1^*(I-zF^*)^{-1}\mathfrak C_{A,{\bf v}}^{-1}(zI-A).
\end{align*}
Combining the latter equality with \eqref{9u} we get
\begin{equation}
{\bf v}p(z)R(z)={\bf v}{\bf e}_1^*(I-zF^*)^{-1}\mathfrak C_{A,{\bf v}}^{-1}(zI-A)P_{A,{\bf v}}
(I-zA^*)^{-1}{\bf g}_{A,{\bf v}}.
\label{10u}
\end{equation}
By \eqref{12u}, \eqref{p6u} and \eqref{8u},
\begin{align}
&(zI-A)P_{A,{\bf v}} (I-zA^*)^{-1}{\bf g}_{A,{\bf v}}\notag\\
&=-AP_{A,{\bf v}}{\bf g}_{A,{\bf v}}+z{\bf v}{\bf v}^*(I-zA^*)^{-1}{\bf g}_{A,{\bf v}}\notag\\
&={\bf v}-{\bf v}{\bf v}^*P_{A,{\bf v}}^{-1}(I-A)^{-1}{\bf v}+z{\bf v}{\bf v}^*(I-zA^*)^{-1}{\bf g}_{A,{\bf v}}\notag\\
&={\bf v}\big(1-{\bf v}^*(I-A^*)^{-1}{\bf g}_{A,{\bf v}}+z{\bf v}^*(I-zA^*)^{-1}{\bf g}_{A,{\bf v}}\big)
={\bf v}\Theta(z),\label{feb13}
\end{align}
where the last step is clear from \eqref{14u}.
We now substitute the latter equality into \eqref{10u} and take into account that 
$\mathfrak C_{A,{\bf v}}^{-1}{\bf v}={\bf e}_1$ and ${\bf e}_1^*(I-zF^*)^{-1}{\bf e_1}=1$:
\begin{align*}
{\bf v}p(z)R(z)&={\bf v}{\bf e}_1^*(I-zF^*)^{-1}\mathfrak C_{A,{\bf v}}^{-1}
{\bf v}\Theta(z)\\
&={\bf v}{\bf e}_1^*(I-zF^*)^{-1}{\bf e_1}\Theta(z)={\bf v}\Theta(z).
\end{align*}
Since ${\bf v}=T{\bf e}_1\neq 0$, the latter equality implies $p(z)R(z)=\Theta(z)$. 

\medskip
\noindent
{\bf Step 2:} {\em The power series $\Theta$ of the form \eqref{14u} is a finite Blaschke product}. 

\smallskip
\noindent
Let $D=1-{\bf v}^*(I-A^*)^{-1}{\bf g}_{A,{\bf v}}$ denote the free coefficient in \eqref{14u} and let
$$
\widetilde A=P_{A,{\bf v}}^{\frac{1}{2}}A^*P_{A,{\bf v}}^{-\frac{1}{2}},\quad B=P_{A,{\bf v}}^{\frac{1}{2}}{\bf g}_{A,{\bf v}},
\quad C={\bf v}^*P_{A,{\bf v}}^{-\frac{1}{2}}.
$$
Then we have 
$C\widetilde{A}^kB={\bf v}^*A^{*k}{\bf g}_{A,{\bf v}}$ for all $k\ge 0$; hence, $\Theta$ of the form \eqref{14u}
also can be written as 
\begin{equation}
\Theta(z)=D+zC(I-z\widetilde{A})^{-1}B.
\label{feb8}
\end{equation}
To complete Step 2, it suffices (by Theorem \ref{L:feb2}) to show that the matrix $\sbm{\widetilde{A} & B \\ C & D}$ is unitary. 
In other words we need to verify the equalities 
$\widetilde{A}\widetilde{A}^*+BB^*=I$, $\widetilde{A}C^*+BD^*=0$ and $CC^*+DD^*=1$. This is done below,
upon making multiple use of formulas \eqref{feb5} and \eqref{6u}: 
\begin{align*}
\widetilde{A}\widetilde{A}^*+BB^*&=P_{A,{\bf v}}^{\frac{1}{2}}\big(A^*P_{A,{\bf v}}^{-1}A
+{\bf g}_{A,{\bf v}}{\bf g}_{A,{\bf v}}\big)P_{A,{\bf v}}^{\frac{1}{2}}=
P_{A,{\bf v}}^{\frac{1}{2}}P_{A,{\bf v}}^{-1}P_{A,{\bf v}}^{\frac{1}{2}}=I,\\
\widetilde{A}C^*+BD^*&=P_{A,{\bf v}}^{\frac{1}{2}}\big(A^*P^{-1}_{A,{\bf v}}{\bf v}+
{\bf g}_{A,{\bf v}}\big(1-{\bf g}_{A,{\bf v}}^*(I-A)^{-1}{\bf v}\big)\big)\\
&=P_{A,{\bf v}}^{\frac{1}{2}}\big(A^*P^{-1}_{A,{\bf v}}{\bf v}+
{\bf g}_{A,{\bf v}}-(P^{-1}_{A,{\bf v}}-A^*P^{-1}_{A,{\bf v}}A)(I-A)^{-1}{\bf v})\\\
&=P_{A,{\bf v}}^{\frac{1}{2}}\big({\bf g}_{A,{\bf v}}-(I-A^*)P_{A,{\bf v}}^{-1}(I-A)^{-1}{\bf v}\big)=0,\\
1-DD^*&=1-(1-{\bf v}^*(I-A^*)^{-1}{\bf g}_{A,{\bf v}})(1-{\bf g}_{A,{\bf v}}^*(I-A)^{-1}{\bf v})\\
&={\bf v}^*(I-A^*)^{-1}{\bf g}_{A,{\bf v}}+{\bf g}_{A,{\bf v}}^*(I-A)^{-1}{\bf v}\\
&\quad-{\bf v}^*(I-A^*)^{-1}(P^{-1}_{A,{\bf v}}-A^*P^{-1}_{A,{\bf v}}A)(I-A)^{-1}{\bf v})\\
&={\bf v}^*P_{A,{\bf v}}^{-1}(I-A)^{-1}{\bf v}
+{\bf v}^*(I-A^*)^{-1}P_{A,{\bf v}}^{-1}{\bf v}\\
&\quad-{\bf v}^*(I-A^*)^{-1}P^{-1}_{A,{\bf v}}(I-A)^{-1}{\bf v}\\
&\quad +{\bf v}^*\big((I-A^*)^{-1}-I\big)P^{-1}_{A,{\bf v}}\big((I-A)^{-1}-I\big){\bf v}\\
&={\bf v}^*P_{A,{\bf v}}^{-1}{\bf v}=CC^*.
\end{align*}
{\bf Step 3:} {\em The power series $R$ given by the formula \eqref{9u} has no zeros in $\mathbb B_1$}. 

\smallskip

We will show that the polynomial (of degree $n$) 
\begin{equation}
G(z)=p(z)-(z-1){\bf g}_{A,{\bf v}}^*(I-A)^{-1}\mathfrak C_{A,{\bf v}}(I-zF^*)^{-1}\big({\bf e}_n-F^*\mathfrak C_{A,{\bf v}}^{-1}A^n{\bf v}\big)
\label{feb10}
\end{equation}
is the formal inverse of $R$, The existence of such an inverse implies the absence of zeros for $R$. 
We start our last computation: since 
$pR=\Theta$, by Step 1, we have from \eqref{9u} and \eqref{feb10}
\begin{align}
G(z)R(z)&=\Theta(z)-(z-1){\bf g}_{A,{\bf v}}^*(I-A)^{-1}\mathfrak C_{A,{\bf v}}(I-zF^*)^{-1}\notag\\
&\qquad\qquad\quad \times
\big({\bf e}_n-F^*\mathfrak C_{A,{\bf v}}^{-1}A^n{\bf v}\big){\bf e}_n^*
\mathfrak C_{A,{\bf v}}^{-1}P_{A,{\bf v}}(I-zA^*)^{-1}{\bf g}_{A,{\bf v}}\notag\\
&=\Theta(z)-(z-1){\bf g}_{A,{\bf v}}^*(I-A)^{-1}\mathfrak C_{A,{\bf v}}(I-zF^*)^{-1}\notag\\
&\qquad\qquad\quad \times (\mathfrak C_{A,{\bf v}}^{-1}-F^*\mathfrak C_{A,{\bf v}}A)P_{A,{\bf v}}(I-zA^*)^{-1}{\bf g}_{A,{\bf v}},\label{feb14}
\end{align}
where we used the equality \eqref{feb11} for the second step. We next rearrange
$$
\mathfrak C_{A,{\bf v}}^{-1}-F^*\mathfrak C_{A,{\bf v}}A=F^*\mathfrak C_{A,{\bf v}}^{-1}(zI-A)
+(I-zF^*)\mathfrak C_{A,{\bf v}}^{-1}
$$
and recall computation \eqref{feb13} to get
\begin{align*}
&\mathfrak C_{A,{\bf v}}(I-zF^*)^{-1}(\mathfrak C_{A,{\bf v}}^{-1}-F^*\mathfrak C_{A,{\bf v}}A)P_{A,{\bf v}}(I-zA^*)^{-1}{\bf g}_{A,{\bf v}}\\
&=\mathfrak C_{A,{\bf v}}^{-1}
(I-zF^*)^{-1}F^*\mathfrak C_{A,{\bf v}}^{-1}(zI-A)P_{A,{\bf v}}(I-zA^*)^{-1}{\bf g}_{A,{\bf v}}\\
&\qquad + P_{A,{\bf v}}(I-zA^*)^{-1}{\bf g}_{A,{\bf v}}\\
&=\mathfrak C_{A,{\bf v}}^{-1} (I-zF^*)^{-1}F^*\mathfrak C_{A,{\bf v}}^{-1}{\bf v}\Theta(z)+P_{A,{\bf v}}(I-zA^*)^{-1}{\bf g}_{A,{\bf v}}\\
&=P_{A,{\bf v}}(I-zA^*)^{-1}{\bf g}_{A,{\bf v}},
\end{align*}
where the last step follows due to equalities $F^*\mathfrak C_{A,{\bf v}}^{-1}{\bf v}=F^*{\bf e}_1=0$ (see \eqref{1.15e} and \eqref{1.14eg}).
Substituting the latter equality into \eqref{feb14} we arrive at 
$$
G(z)R(z)=\Theta(z)-(z-1){\bf g}_{A,{\bf v}}^*(I-A)^{-1}P_{A,{\bf v}}(I-zA^*)^{-1}{\bf g}_{A,{\bf v}}.
$$
Making use of formulas \eqref{6u} and \eqref{14u} for ${\bf g}_{A,{\bf v}}$ and $\Theta(z)$ we simplify the expression on the right side
as follows:
\begin{align*}
G(z)R(z)&=1-{\bf v}^*(I-A^*)^{-1}{\bf g}_{A,{\bf v}}+z{\bf v}^*(I-zA^*)^{-1}{\bf g}_{A,{\bf v}}\\
&\quad-(z-1){\bf v}^*(I-A^*)^{-1}(I-zA^*)^{-1}{\bf g}_{A,{\bf v}}\\
&=1-{\bf v}^*(I-A^*)^{-1}\big(I-zA^*-z(I-A^*)\big)(I-zA^*)^{-1}{\bf g}_{A,{\bf v}}\\
&\quad-(z-1){\bf v}^*(I-A^*)^{-1}(I-zA^*)^{-1}{\bf g}_{A,{\bf v}}=1
\end{align*}
Thus the polynomial $G$ is indeed the reciprocal of $R$, which completes the proof of the theorem. 
\end{proof}
\begin{Rk}
{\rm Note that given a polynomial $p$ as in Theorem \ref{T:1}, there is also a power series $\widetilde{R}$ with no zeros in $\mathbb B_1$
such that $\widetilde{R}p$ is a finite Blaschke product. To see this, let us apply Theorem \ref{T:1} to the conjugate polynomial 
$p^\sharp$ to find a power series $R$ such that $R^\sharp p\sharp$ is a finite Blaschke product, and then let $\widetilde{R}=R^\sharp$.} 
\label{R:4.5}
\end{Rk}
We next apply Theorem \ref{T:1} to three particular choices of $A$ and ${\bf v}$. 
\begin{Ex}
{\rm We recall a question from Section 1: {\em given a monic polynomial $p(z)=z^n+p_{n-1}z^{n-1}+\ldots+p_1z+p_0$ with all zeros 
inside $\mathbb B_1$, find a power series $R$ without zeros such that $pR:=\Theta$ is a finite Blaschke product}. Theorem \ref{T:1} 
provides a solution: represent $p$ in terms of its companion matrix $C_p$ as in \eqref{1.15e} and then apply Theorem \ref{T:1}
with $A=C_p$ and ${\bf v}={\bf e}_1$. The formulas \eqref{9u} and \eqref{14u} give fairly explicit expressions for $R$ and $\Theta$.}
\label{E:u1}
\end{Ex}
\begin{Ex}
{\em It was shown in \cite{bolprep}, that given any collection $\bgam=(\gamma_1,\ldots,\gamma_n)\subset\mathbb H$, the pair
\begin{equation}
A=A_{\bgam}:=\begin{bmatrix}\gamma_1 &0&\ldots&&0\\ 1
&\gamma_2&0&&\\ 0&1&\ddots&\ddots&\vdots\\
\vdots&\ddots&\ddots&\ddots&0\\ 0&\ldots &0&1&\gamma_n\end{bmatrix},\quad {\bf v}={\bf e_{1,n}}:=\begin{bmatrix}1 \\ 0 \\ 0 \\ \vdots \\ 0\end{bmatrix}
\label{1.5k}
\end{equation}
is controllable, and the formula \eqref{1.15g} defines the polynomial $p=\bp_{\gamma_1}\bp_{\gamma_2}\cdots \bp_{\gamma_n}$.
Assuming that $|\gamma_k|<1$ for $k=1,\ldots,n$, we may apply Theorem \ref{T:1} with $A$ and ${\bf v}$ as in \eqref{1.5k}, to get $R$ 
such that $\bp_{\gamma_1}\bp_{\gamma_2}\cdots \bp_{\gamma_n}R$ is a finite Blaschke product. We hence partly recover 
Theorem \ref{T:jan1}, where $R$ was constructed recursively.}
\label{E:u2}
\end{Ex}
\begin{Ex}
{\rm As we have already mentioned, the general problem of constructing a finite Blaschke product with prescribed spherical divisors
easily reduces to the case where all spherical divisors are indecomposable polynomials based spherical chains 
$\bgam_k=(\gamma_{k,1},\gamma_{k,2},\ldots,\gamma_{k,n_k})$ from distinct similarity
classes. Let us take these indecomposable polynomials in the form 
\begin{equation}
P_{\bgam_k}=\bp_{\gamma_{k,1}}\bp_{\gamma_{k,2}}\cdots \bp_{\gamma_{k,n_k}}\quad \mbox{for}\quad k=1,\ldots,m.
\label{1.5b}
\end{equation}
With each polynomial $P_{\bgam_k}$, we associate the pair $(A_{\bgam_k},{\bf e}_{1,n_k})$ defined via formulas 
\eqref{1.5k}. Their direct sum $(A,{\bf v})$ defined as 
\begin{equation}
A=\begin{bmatrix}A_{\bgam_1} & & 0 \\ & \ddots & \\ 0 & & A_{\bgam_m}\end{bmatrix},\quad 
{\bf v}=\begin{bmatrix}{\bf e}_{1,n_1} \\ \vdots \\ {\bf e}_{1,n_m}\end{bmatrix}
\label{1.5m}
\end{equation}
is controllable, and the formula \eqref{1.15g} defines the polynomial 
$$
p={\bf lrcm}\{P_{\bgam_k}: \, k=1,\ldots,m\};
$$ 
see \cite{bolprep} for details. Assuming that all spherical chains $\bgam_k$ above are in $\mathbb B_1$, we may apply Theorem \ref{T:1} with $A$ and ${\bf v}$ as in \eqref{1.5m}
to get a finite Blaschke product with prescribed left spherical divisors \eqref{1.5b}.}
\label{E:u3}
\end{Ex}
\begin{Rk}
{\rm Theorem \ref{T:1} suggests a general way to construct a stable unitary realization of a given finite Blaschke product $f$.
Indeed, we can $f$ (given as in \eqref{9.6}) in the form $f=pR$, where
$p$ is a monic polynomial with all zeros inside $\mathbb B_1$ and $R\in\mathcal H_1$ has no zeros; since all similarity classes
enclosing all zeros of $f$ are known from \eqref{9.6}, the polynomial $p$ can be constructed explicitly as suggested in Theorem \ref{T:1.1}.
The polynomial $p$ has the same left zero structure as $f$, and in particular, all its zeros are contained in $\mathbb B_1$. We then
apply Theorem \ref{T:1} with any controllable pair $(A,{\bf v})$ representing $p$ in the form \eqref{1.15g} (for example we may choose
$A=C_p$ and ${\bf v}={\bf e}_1$) to come up with the finite Blaschke product $\Theta$ of the form \eqref{14u}. This $\Theta$
admits a unitary realization \eqref{feb8} and is equal to the original $f$ up to a unimodular constant factor which can be incorporated as in 
\eqref{dop7} to produce the desired realization for $f$.}
\label{R:dop}
\end{Rk}
We next discuss the uniqueness of the stable unitary realization \eqref{132}. As we have seen in the proof 
of Theorem \ref{T:1} (Step 2), two realizations define the same power series
\begin{equation}
f(z)=D+zC(I-zA)^{-1}B=D+z\widetilde{C}(I-z\widetilde{A})^{-1}\widetilde{B},
\label{dop8}
\end{equation}
if there exists an invertible matrix $V$ such that 
\begin{equation}
VA=\widetilde{A}V, \quad VB=\widetilde{B}, \quad C=\widetilde{C}V.
\label{dop9}
\end{equation}
Two realizations related as in \eqref{dop9} are called {\em similar}; if equalities \eqref{dop9} hold for some {\em unitary} $V$,
the realizations are called {\em unitarily equivalent}. In Proposition \ref{P:4.10} below, we will shows that stable unitary realizations
of finite Blaschke products are unique up to unitary equivalence. To this end we need the following auxiliary result.
\begin{lemma}
Let $A\in\mathbb H^{n\times n}$ and ${\bf v}\in\mathbb H^{n\times 1}$ satisfy $AA^*+{\bf v}{\bf v}^*=I_n$.
Then the pair $(A,{\bf v})$ is controllable if and only if $A$ is stable (i.e., $\sigma_{\bf
r}(A)\subset\mathbb B_1$).
\label{P:5.1}
\end{lemma}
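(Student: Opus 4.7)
The plan is to exploit a single algebraic identity linking powers of $A$, of $A^*$, and the controllability matrix. Starting from the hypothesis rewritten as $I-AA^*={\bf v}{\bf v}^*$, a telescoping induction yields
\[
A^kA^{*k}=I-\sum_{j=0}^{k-1}A^j{\bf v}{\bf v}^*A^{*j}\qquad (k\ge 0),
\]
which at $k=n$ reads $A^nA^{*n}=I-\mathfrak C_{A,{\bf v}}\mathfrak C_{A,{\bf v}}^*$. Both directions of the equivalence will specialize from this identity.

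For the direction controllable $\Rightarrow$ stable, invertibility of $\mathfrak C_{A,{\bf v}}$ makes $\mathfrak C_{A,{\bf v}}\mathfrak C_{A,{\bf v}}^*$ strictly positive definite, so the identity above gives $\|A^{*n}\|_{\mathrm{op}}<1$. Combined with the a priori bound $\|A^*\|\le 1$ (coming from $AA^*\le I$), writing any exponent as $m=qn+r$ with $0\le r<n$ and estimating $\|A^{*m}\|\le\|A^{*n}\|^{q}\|A^{*r}\|$ forces $A^{*k}\to 0$; taking adjoints gives $A^k\to 0$, which is the stability criterion (via the Jordan form) recalled just before the statement.

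For the converse, I argue the contrapositive. If $(A,{\bf v})$ is not controllable, let $\mathcal M$ be the right $\mathbb H$-linear span of $\{A^k{\bf v}:k\ge 0\}$ in $\mathbb H^n$. A pigeonhole argument (pick the smallest $k$ with $A^k{\bf v}$ already in the right span of $\{A^j{\bf v}:j<k\}$ and propagate $A$ through right-scalar multiplication) shows that $\mathcal M$ coincides with the right span of $\{A^j{\bf v}:j<n\}$, so non-controllability makes $\mathcal M$ proper and $\mathcal N:=\mathcal M^\perp$ nonzero. Since $\mathcal M$ is $A$-invariant, $\mathcal N$ is $A^*$-invariant; and since ${\bf v}\in\mathcal M$, any ${\bf y}\in\mathcal N$ satisfies $\|A^*{\bf y}\|^2={\bf y}^*(I-{\bf v}{\bf v}^*){\bf y}=\|{\bf y}\|^2$, so $A^*|_{\mathcal N}$ is an isometry of the finite-dimensional space $\mathcal N$, hence unitary. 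Expressing $A$ in block upper-triangular form with respect to $\mathbb H^n=\mathcal M\oplus\mathcal N$ gives $A=\sbm{A_{11}&A_{12}\\0&A_{22}}$ with $A_{22}$ unitary. A right eigenvalue $\alpha$ of $A_{22}$ (existing since every quaternionic matrix has right eigenvalues) must satisfy $|\alpha|=1$, and one lifts $\alpha$ to a right eigenvalue of $A$ via the Sylvester-type operator $T{\bf u}=A_{11}{\bf u}-{\bf u}\alpha$ on $\mathcal M$: if $T$ fails to be injective, its kernel gives a right eigenvector of $A$ supported in $\mathcal M\oplus\{0\}$; if $T$ is bijective, solving $T{\bf u}=-A_{12}{\bf w}$ (where $A_{22}{\bf w}={\bf w}\alpha$) makes $\sbm{{\bf u}\\{\bf w}}$ a right eigenvector of $A$ with eigenvalue $\alpha$. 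Either way $A$ acquires a right eigenvalue of modulus one, so $A$ is not stable.

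The main obstacle lies in this last paragraph. Over the quaternions, right eigenvalues of $A$ and $A^*$ are not related by simple conjugation (as they are in the complex case), so the unitarity of $A^*|_{\mathcal N}$ does not by itself hand over a right eigenvalue of $A$ on the unit circle. The block decomposition followed by the Sylvester lift is precisely what resolves this asymmetry; the pigeonhole step similarly requires care because right-scalar multiplication and $A$ interact only through the identity $A({\bf u}\alpha)=(A{\bf u})\alpha$, not through any general commutation.
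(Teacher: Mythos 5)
Your proof is correct, but it travels a genuinely different road from the paper's, especially in the second direction. The paper organizes everything around the Gramian $P_{A,{\bf v}}=\sum_{j\ge 0}A^{j}{\bf v}{\bf v}^*A^{*j}$ and the limit $\Delta=\lim_k A^kA^{*k}$: the hypothesis gives $P_{A,{\bf v}}=I-\Delta$, controllability gives $P_{A,{\bf v}}\succeq \mathfrak C_{A,{\bf v}}\mathfrak C_{A,{\bf v}}^*\succ\varepsilon I$ while the tail identity forces $A^kP_{A,{\bf v}}A^{*k}\to 0$, hence $\Delta=0$; conversely, stability gives $P_{A,{\bf v}}=I$, yet non-controllability would confine the range of $P_{A,{\bf v}}$ to a proper right submodule, making $I$ singular --- a two-line contradiction. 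Your first half replaces the limiting argument with the finite identity $A^nA^{*n}=I-\mathfrak C_{A,{\bf v}}\mathfrak C_{A,{\bf v}}^*$ and a quantitative norm bound $\|A^{*n}\|<1$; this is arguably cleaner and avoids monotone operator limits. Your second half is where the divergence is greatest: the decomposition $\mathbb H^n=\mathcal M\oplus\mathcal M^\perp$, the unitarity of $A^*$ on $\mathcal M^\perp$ (which does use ${\bf v}\in\mathcal M$ correctly), and the Sylvester-type lift of a unimodular right eigenvalue of $A_{22}$ to one of $A$ are all sound --- and you are right to flag that $T{\bf u}=A_{11}{\bf u}-{\bf u}\alpha$ is only $\mathbb R$-linear and that right eigenvalues of $A$ and $A^*$ are not simply conjugate over $\mathbb H$, so the extra block-triangular machinery is genuinely needed on your route. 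What it buys is an explicit unimodular point of $\sigma_{\bf r}(A)$ witnessing non-stability; what it costs is considerable length compared with the paper's observation that a nonsingular Gramian cannot have range in a proper subspace. Both arguments are valid.
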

\begin{proof}
By the assumption, $A$ is a contraction; hence the limit
$\Delta:={\displaystyle\lim_{k\to\infty}A^{k}A^{*k}}\succeq 0$ exists.
Clearly, $\Delta=0$ if and only if $A$ is stable. Letting $k\to\infty$ in the identity
$$
\sum_{j=0}^k A^{j}{\bf v}{\bf v}^*A^{*j}=\sum_{j=0}^\infty A^{j}(I_n-AA^*)A^{*j}=I_n-A^{k+1}A^{*(k+1)}
$$
we conclude (upon making use the notation \eqref{7u}) that
\begin{equation}
P_{A,{\bf v}}:=\sum_{j=0}^\infty A^{j}{\bf v}{\bf v}^*A^{*j}=I_n-\Delta\succeq 0.
\label{5.1}
\end{equation}
If the pair $(A,{\bf v})$ is controllable, its controllability matrix \eqref{feb1}
is invertible and hence (see \eqref{7s}),
\begin{equation}
P_{A,{\bf v}}\succeq \mathfrak C_{A,{\bf v}}\mathfrak C_{A,{\bf v}}^*\succ \varepsilon \cdot I_n \qquad\mbox{for some}\quad \varepsilon>0.
\label{5.2}
\end{equation}
It follows from the series representation for $P_{A,{\bf v}}$, that
$$
P_{A,{\bf v}}=\sum_{j=0}^{k-1} A^{j}{\bf v}{\bf v}^*A^{*j}+A^{k}P_{A,{\bf v}}A^{*k},
$$
and furthermore, by letting $k\to\infty$ on the right side, that
${\displaystyle\lim_{k\to\infty}A^{k}P_{A,{\bf v}}A^{*k}}=0$. Combining the latter with
\eqref{5.2} we conclude that $\Delta=0$ and hence $A$ is stable.

\smallskip

Conversely, if $A$ is stable then $P_{A,{\bf v}}=I_n$, by \eqref{5.1}. If the pair $(A,{\bf v})$ is not controllable,
then the columns of $\mathfrak C_{A,{\bf v}}$ are right-linearly dependent and hence, for some $k<n$,
$A^{k}{\bf v}$ belongs to the right linear span $\operatorname{{\bf span}_{\bf r}}\{A^{j}{\bf v}: \, 1\le j<k\}$. But then
$A^{m}{\bf v}$ belongs to the same linear span for all $m\ge k$ and we conclude from \eqref{5.1} that $P_{A,{\bf v}}=I_n$ is singular.
The latter contradiction completes the proof.\qed
\end{proof}
\begin{proposition}
Let us assume that equality \eqref{dop8} holds for two unitary realizations with stable matrices $A\in\mathbb H^{n\times m}$
and $\widetilde{A}\in\mathbb H^{m\times m}$. Then $n=m$ and equalities \eqref{dop9} hold for some unitary matrix $V$.
\label{P:4.10}
\end{proposition}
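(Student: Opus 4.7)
The strategy is to show that both realizations are minimal (controllable and observable), build an intertwining map $V$ from the power-series moments, and then extract unitarity from uniqueness of solutions to a Stein equation.

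\smallskip

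First I would extract the Taylor coefficients of \eqref{dop8} to obtain $D=\widetilde D$ and
$$CA^{k}B=\widetilde C\widetilde A^{k}\widetilde B\qquad(k\ge 0).$$
Next, the unitary identities $AA^{*}+BB^{*}=I_{n}$ and $A^{*}A+C^{*}C=I_{n}$, combined with stability of $A$ (and therefore of $A^{*}$, since $A^{k}\to 0$ forces $A^{*k}\to 0$), allow a direct application of Lemma~\ref{P:5.1} to the pairs $(A,B)$ and $(A^{*},C^{*})$: the first yields controllability of $(A,B)$, the second yields observability of $(A,C)$. The same holds for $\widetilde A,\widetilde B,\widetilde C$.

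\smallskip

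I would then define a right-linear map $V:\mathbb H^{n}\to\mathbb H^{m}$ by
$$V\Bigl(\sum_{j}A^{k_{j}}Br_{j}\Bigr)=\sum_{j}\widetilde A^{k_{j}}\widetilde Br_{j},\qquad r_{j}\in\mathbb H,\ k_{j}\ge 0.$$
For well-definedness, if $x=\sum_{j}A^{k_{j}}Br_{j}=0$ then $CA^{\ell}x=0$ for every $\ell\ge 0$, so the moment identity gives $\widetilde C\widetilde A^{\ell}\bigl(\sum_{j}\widetilde A^{k_{j}}\widetilde Br_{j}\bigr)=0$ for all $\ell\ge 0$, and observability of $(\widetilde A,\widetilde C)$ forces the right-hand side to vanish. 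The map $V$ is defined on all of $\mathbb H^{n}$ by controllability of $(A,B)$; interchanging the two realizations produces a two-sided inverse, so $V$ is a bijection and in particular $n=m$. The relations $VA=\widetilde AV$, $VB=\widetilde B$, $C=\widetilde CV$ read off directly.

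\smallskip

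For unitarity, set $W:=VV^{*}\in\mathbb H^{n\times n}$. Using the intertwining and $AA^{*}+BB^{*}=I_{n}$,
$$W-\widetilde A W\widetilde A^{*}=V(I_{n}-AA^{*})V^{*}=VBB^{*}V^{*}=\widetilde B\widetilde B^{*}.$$
Since also $I_{n}-\widetilde A\widetilde A^{*}=\widetilde B\widetilde B^{*}$, both $W$ and $I_{n}$ solve the Stein equation $X-\widetilde AX\widetilde A^{*}=\widetilde B\widetilde B^{*}$; stability of $\widetilde A$ guarantees uniqueness (the iteration $X=\widetilde A^{k}X\widetilde A^{*k}\to 0$ for the homogeneous equation), so $W=I_{n}$. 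The symmetric computation with $V^{*}V$ using $A^{*}A+C^{*}C=I_{n}$ and the intertwining $A^{*}V^{*}=V^{*}\widetilde A^{*}$ gives $V^{*}V=I_{n}$.

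\smallskip

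The main obstacle is the well-definedness of $V$ in the non-commutative setting: the argument requires careful use of \emph{right} linearity and invokes observability of the target pair to conclude that the image of any null combination is again null. Once this is established (which in turn depends on having Lemma~\ref{P:5.1} available on both the $B$-side and the $C$-side), the rest parallels the classical commutative case and reduces to two applications of the Stein-equation uniqueness that already underlies the proof of Lemma~\ref{P:5.1}.
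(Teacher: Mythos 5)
Your proof is correct, but it follows a genuinely different route from the paper's. The paper never passes through the coefficient identities $CA^kB=\widetilde{C}\widetilde{A}^k\widetilde{B}$; instead it starts from the kernel computation \eqref{133}, deduces the Gram identity $\Upsilon(\alpha)\Upsilon(\beta)^*=\widetilde{\Upsilon}(\alpha)\widetilde{\Upsilon}(\beta)^*$ for all $\alpha,\beta\in\mathbb B_1$, and defines $V$ directly on the vectors $\Upsilon(\beta)^*$ (the observability side), so that $V$ is an \emph{isometry by construction}; surjectivity and $n=m$ then come from a single application of Lemma \ref{P:5.1} to the pair $(A^*,C^*)$, and the intertwining relations \eqref{dop9} are read off afterwards. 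You instead run the classical realization-theory argument: minimality (controllability of $(A,B)$ plus controllability of $(A^*,C^*)$, each obtained from Lemma \ref{P:5.1} applied to the appropriate unitarity identity) yields a similarity $V$ defined on the controllability subspace of $(A,B)$, with observability of the target pair supplying well-definedness; unitarity of $V$ is then recovered a posteriori from uniqueness of the solution of the Stein equation (the same uniqueness the paper invokes for \eqref{12u}). Your version costs two applications of Lemma \ref{P:5.1} and an extra Stein-equation step, but it avoids the kernel identity \eqref{133} entirely, works purely at the level of power-series coefficients, and cleanly separates ``similar'' from ``unitarily equivalent,'' which is conceptually tidy. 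All the delicate non-commutative points (right-linearity of the span, well-definedness via observability, stability of $A^*$ from stability of $A$, rank invariance of free right $\mathbb H$-modules) are handled correctly, so the argument stands as written.
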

\begin{proof}
From the computation \eqref{133} and unitary realization formulas \eqref{dop8} we have for all $\alpha,\beta\in\mathbb B_1$,
$$
1-f^{\bl}(\alpha)\overline{f^{\bl}(\beta)}=\Upsilon(\alpha)\Upsilon(\beta)^*-\alpha\Upsilon(\alpha)\Upsilon(\beta)^*\overline{\beta}
=\widetilde{\Upsilon}(\alpha)\widetilde{\Upsilon}(\beta)^*-\alpha\widetilde{\Upsilon}(\alpha)\widetilde{\Upsilon}(\beta)^*\overline{\beta},
$$
where $\Upsilon(\gamma)$ is defined in \eqref{ups} and where $\Upsilon(\gamma):=\sum_{k\ge 0}\gamma_kCA^k$. Then we have
$$
\Upsilon(\alpha)\Upsilon(\beta)^*-\widetilde{\Upsilon}(\alpha)\widetilde{\Upsilon}(\beta)^*=
\alpha(\Upsilon(\alpha)\Upsilon(\beta)^*-\widetilde{\Upsilon}(\alpha)\widetilde{\Upsilon}(\beta)^*)\overline{\beta}
$$
which implies, since $|\alpha|<1$ and $|\beta|<1$,
$$
\Upsilon(\alpha)\Upsilon(\beta)^*=\widetilde{\Upsilon}(\alpha)\widetilde{\Upsilon}(\beta)^*\quad\mbox{for all}\quad \alpha,\beta\in\mathbb B_1.
$$
Hence, the transformation $V$ defined by the formula
\begin{equation}
V: \, \Upsilon(\beta)^*\to \widetilde{\Upsilon}(\beta)^*
\label{dop11}
\end{equation}
extends by linearity to an isometry $V$ from 
$$
\mathcal D_V=\operatorname{\bf span}_{\bf r}\{\Upsilon(\beta)^*: \, \beta\in\mathbb B_1\}\quad\mbox{onto}\quad 
\mathcal R_V=\operatorname{\bf span}_{\bf r}\{\widetilde{\Upsilon}(\beta)^*: \, \beta\in\mathbb B_1\}. 
$$
Observe that $C^*=\Upsilon(0)^*$
belongs to $\mathcal D_V$. Therefore, for any $\beta\in\mathbb B_1$, the vector $\Upsilon(\beta)^*-C^*=A^*\Upsilon(\beta)^*\overline\beta$
belongs to $\mathcal D_V$ and hence $A^*\Upsilon(\beta)^*\in \mathcal D_V$ for all nonzero $\beta\in\mathbb B_1$. Since $\mathcal D_V$ is closed,
we have $A^*\Upsilon(0)^*=A^*C^*\in \mathcal D_V$. Repeating the above argument we conclude that $A^{*j}C^*\in\mathcal D_V$ for all $j\ge 0$.
Since the matrix $\sbm{A & B \\ C & G}$ is unitary, we have $A^*A+C^*C=I_n$. Since $A$ is stable, the pair $(A^*,C^*)$ is controllable, 
by Lemma \ref{P:5.1}. Then $\mathbb H^{n\times 1}=\operatorname{\bf span}_{\bf r}\{C^*,A^*C^*,\ldots,A^{*(n-1)}C^*\}\subseteq \mathcal D_V$.
Since the converse inclusion is obvious, we have $\mathcal D_V=\mathbb H^{n}$. 

\smallskip

Since the rightmost realization in \eqref{dop8} is also unitary and stable, we conclude that the pair $(\widetilde{A}^*,\widetilde{C}^*)$ is
controllable and hence, $\mathcal R_V=\mathbb H^{m}$. Therefore, $V$ is a surjective isometry, that is, $V$ is unitary and $n=m$. 

\smallskip

To verify equalities \eqref{dop9}, we first let $\beta=0$ in \eqref{dop11} to get $VC^*=\widetilde{C}^*$, which is equivalent to
$C=\widetilde{C}V$. Again making use of \eqref{dop11}, we write
$$\eqref{dop8}
VA^*\Upsilon(\beta)^*\overline\beta=V(\Upsilon(\beta)^*-C^*)=\widetilde{\Upsilon}(\beta)^*-\widetilde{C}^*=
\widetilde{A}^*\widetilde{\Upsilon}(\beta)^*\overline\beta=\widetilde{A}^*V\Upsilon(\beta)^*\overline\beta,
$$
from which it follows, since the pair $(A^*,C^*)$ is controllable, that $VA^*=\widetilde{A}^*V$, which is equivalent to 
$VA=\widetilde{A}V$. Finally, we equate the adjoints of the corresponding coefficients in \eqref{dop8} to get
$$
B^*A^{*j}C^*=\widetilde{B}^*\widetilde{A}^{*j}\widetilde{C}^*=\widetilde{B}^*\widetilde{A}^{*j}VC^*=\widetilde{B}^*VA^{*j}C^*
$$
for all $j\ge 0$, and hence, $B^*=\widetilde{B}^*V$ which completes the verification of \eqref{dop9}. \qed
\end{proof}
Now we can complete the proof of Theorem \ref{L:feb2}.
\begin{corollary}
If the matrix $\sbm{A & B \\ C & D}\in\mathbb H^{(n+1)\times(n+1)}$ with $D\in\mathbb H$ is unitary and 
$\sigma_{\bf r}(A)\subset\mathbb B_1$, then the power series \eqref{132} is a finite Blaschke product of degree $n$.
\label{C:4.7}
\end{corollary}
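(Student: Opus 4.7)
The plan is to leverage the uniqueness of stable unitary realizations (Proposition \ref{P:4.10}) together with the converse direction of Theorem \ref{L:feb2}, which has already been fully established in the excerpt. The first half of the proof of Theorem \ref{L:feb2} already showed, via the computation culminating in \eqref{133} and specialized to $\beta=\alpha$, that
\begin{equation*}
1-|f^{\bl}(\alpha)|^2=(1-|\alpha|^2)\Upsilon(\alpha)\Upsilon(\alpha)^*,
\end{equation*}
so $|f^{\bl}(\alpha)|\le 1$ on $\mathbb B_1$ with modulus $1$ on the boundary (the hypothesis $\sigma_{\bf r}(A)\subset\mathbb B_1$ ensuring that $\Upsilon(\alpha)$ makes sense on $\overline{\mathbb B}_1$). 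By Theorem \ref{T:8.2} (the Fatou-type characterization), $f$ is therefore a finite Blaschke product. The only thing left to establish is the degree claim $\deg f=n$.

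For this I would invoke the converse half of Theorem \ref{L:feb2}, whose proof was carried out in full via the explicit recursion \eqref{dop2}--\eqref{dop1} and culminated in the factorization \eqref{dop7}. Set $m:=\deg f$. That converse produces a stable unitary realization
\begin{equation*}
f(z)=\widetilde D+z\widetilde C(I-z\widetilde A)^{-1}\widetilde B
\end{equation*}
with $\widetilde A\in\mathbb H^{m\times m}$ stable and the block matrix $\sbm{\widetilde A & \widetilde B \\ \widetilde C & \widetilde D}$ unitary. We now have two stable unitary realizations for the same power series $f$: the given one of state-space size $n$ and this newly constructed one of size $m$.

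At this point Proposition \ref{P:4.10} applies directly and forces $n=m$, which is exactly $\deg f=n$. The main point to double-check is that this line of reasoning is noncircular: Proposition \ref{P:4.10} was proved using only the reproducing-kernel identity \eqref{133} (established in the first half of Theorem \ref{L:feb2}, independently of degree counting) and Lemma \ref{P:5.1} (a standalone contractivity/controllability result); and the converse direction of Theorem \ref{L:feb2} that supplies the auxiliary realization of size $\deg f$ was proved by explicit recursion, never appealing to the degree of the original realization. Hence no step in this chain relies on the corollary we are trying to prove, and the argument is complete.
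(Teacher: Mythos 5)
Your proposal is correct and follows essentially the same route as the paper: the boundary-modulus identity from \eqref{133} combined with Theorem \ref{T:8.2} shows $f$ is a finite Blaschke product, and then the size-$m$ stable unitary realization supplied by the converse part of Theorem \ref{L:feb2} is matched against the given size-$n$ one via Proposition \ref{P:4.10} to force $n=m=\deg f$. Your explicit noncircularity check is a welcome addition but does not change the argument, which coincides with the paper's proof of Corollary \ref{C:4.7}.
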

\begin{proof}
We have already shown that $f$ is a finite Blaschke product. Let $\deg f=m$. By the converse statement in Theorem \ref{L:feb2},
$f$ admits a unitary realization \eqref{dop7} with the stable matrix $A_m\in\mathbb H^{m\times m}$. By Proposition \ref{P:4.10}, these 
realizations are unitarily equivalent and in particular, $m=n$.\qed
\end{proof}

\section{Further characterizations of finite Blaschke products}

Theorem \ref{T:8.2} specifies parts (1) and (2) in Theorem \ref{T:3.1} to the case of finite Blaschke products. 
In this section we will present "finite Blaschke product" analogs of other three parts. 
The next result in \cite{acsbook} specifies part (4). The proof below is somewhat different from that in \cite{acsbook}.
\begin{theorem}
A power series  $f\in \bH[[z]]$ is a Blaschke product of degree $n$ if and only if
$\|fh\|_{H^2(\mathbb B_1)}=\|h\|_{H^2(\mathbb B_1)}$ for all $h\in H^2(\mathbb B_1)$ 
and the right $\mathbb H$-submodule $f\cdot  H^2(\mathbb B_1)\subseteq H^2(\mathbb B_1)$ has codimension $n$.
\label{T:3.1c}
\end{theorem}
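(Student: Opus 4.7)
Forward direction. If $f$ is a finite Blaschke product of degree $n$, I would invoke Theorem \ref{L:feb2} to pick a stable unitary realization $f(z) = D + zC(I-zA)^{-1}B$ with $A \in \mathbb{H}^{n\times n}$. For $h = \sum_{k\ge 0} z^k h_k \in H^2(\mathbb B_1)$, introduce the state trajectory $x_0 = 0$, $x_{k+1} = A x_k + B h_k$; a coefficient comparison yields $(fh)_k = C x_k + D h_k$ for all $k \ge 0$. The block identities $A^*A + C^*C = I$, $A^*B + C^*D = 0$, $B^*B + D^*D = 1$ expressing unitarity of $\sbm{A & B \\ C & D}$ then give
$$
\|x_{k+1}\|^2 + |(fh)_k|^2 = \|x_k\|^2 + |h_k|^2.
$$
Summing from $k = 0$ to $N$ produces $\|x_{N+1}\|^2 + \sum_{k=0}^N |(fh)_k|^2 = \sum_{k=0}^N |h_k|^2$, and stability of $A$ (first for polynomial $h$, then by density) forces $\|x_N\| \to 0$, so $\|fh\|_{H^2(\mathbb B_1)} = \|h\|_{H^2(\mathbb B_1)}$. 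The codimension statement is supplied by Remark \ref{R:4.8}: the Takenaka family \eqref{dop5} is an orthonormal set of $n$ vectors whose right linear span equals $(f \cdot H^2(\mathbb B_1))^\perp$.

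Converse direction. Assume $M_f$ is isometric and the model space $K_f := H^2(\mathbb B_1) \ominus f\cdot H^2(\mathbb B_1)$ has right-$\mathbb H$-dimension $n$. Then $f = M_f(1) \in H^2(\mathbb B_1)$ has norm one, and Theorem \ref{T:3.1} places $f$ in $\mathcal S_\mathbb{H}$. Since $f \cdot H^2(\mathbb B_1)$ is $M_z$-invariant, $K_f$ is $R_0 = M_z^*$-invariant; the strong convergence $(M_z^*)^k h \to 0$ on $H^2(\mathbb B_1)$ together with $\dim K_f < \infty$ forces $\sigma_{\bf r}(A) \subset \mathbb B_1$, where $A := M_z^*|_{K_f}$. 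Mimicking the de Branges--Rovnyak template sketched in Remark \ref{R:4.8}, I would set $C : K_f \to \mathbb H$ by $C(h) = h_0$, $D = f_0 \in \mathbb H$, and $B : \mathbb H \to K_f$ by $B(c) = R_0(fc)$; the isometric property of $M_f$ immediately gives $R_0(fc) \in K_f$.

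The bulk of the argument is to check that $\sbm{A & B \\ C & D}$ is unitary. Writing $\Pi$ for the orthogonal projection onto $K_f$, one identifies $C^*c = \Pi(c)$, so $A^*A + C^*C = I_{K_f}$ reduces to $M_z M_z^* + (\text{projection onto constants}) = I$ restricted to $K_f$; the identity $A^*B + C^*D = 0$ collapses to $\Pi(f - f_0) + \Pi(f_0) = \Pi(f) = 0$; and $B^*B + D^*D = \|R_0(f)\|^2 + |f_0|^2 = \|f\|_{H^2(\mathbb B_1)}^2 = 1$. A direct series expansion then verifies $f(z) = D + zC(I-zA)^{-1}B$, after which Corollary \ref{C:4.7} identifies $f$ as a finite Blaschke product of degree $n$. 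The main obstacle is to keep track of the right $\mathbb H$-module conventions (adjoints and projections under the conjugate-linear-in-the-second-argument inner product \eqref{dop6}) while verifying the four block identities; once those conventions are pinned down, each identity reduces to a short computation.
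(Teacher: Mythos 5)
Your proof is correct, but it follows a genuinely different route from the paper's in both directions. For the ``only if'' part, the paper does not use realizations at all: it proves that a single factor ${\bf b}_\alpha$ is an isometric multiplier by splitting $h=h_1+h_2\beps$ via Lemma \ref{L:j1}, invoking the classical complex fact on each slice, and then iterating over the factors of \eqref{9.6}; your state-space energy identity $\|x_{k+1}\|^2+|(fh)_k|^2=\|x_k\|^2+|h_k|^2$, telescoped and combined with stability of $A$ (first for polynomial $h$, then by density), is an equally valid and arguably more uniform argument, at the cost of importing Theorem \ref{L:feb2} (whose proof is independent of this theorem, so there is no circularity); both treatments of the codimension claim lean identically on the Takenaka basis \eqref{dop5} of Remark \ref{R:4.8}. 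For the ``if'' part, the paper takes the isometric column $\sbm{A\\ C}$ built from $R_0$ and $E$ on $\mathcal N=(f\cdot H^2(\mathbb B_1))^\perp$, extends it to an \emph{arbitrary} unitary matrix, obtains an auxiliary Blaschke product $\Theta$, proves $f\cdot H^2(\mathbb B_1)=\Theta\cdot H^2(\mathbb B_1)$ via the decomposition \eqref{6.0b}, and only then concludes $f=\Theta\phi$ with $|\phi|=1$; you instead realize $f$ itself with the canonical de Branges--Rovnyak choice $B=R_0M_f|_{\mathbb H}$, which buys a direct application of Corollary \ref{C:4.7} and avoids the auxiliary $\Theta$ and the unimodular-constant argument, at the price of the extra verifications that $R_0(fc)\in K_f$ (which does follow from isometry of $M_f$, as you indicate) and that the three block identities hold in the right-module conventions. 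One small point worth making explicit in your write-up: the identities $A^*A+C^*C=I$, $A^*B+C^*D=0$, $B^*B+D^*D=1$ only say the colligation is isometric; you should note that unitarity then follows because $K_f\oplus\mathbb H$ is finite-dimensional (this is where the codimension hypothesis enters), and likewise that the strong convergence $R_0^k\to 0$ yields norm convergence $A^k\to 0$, hence stability, only because $\dim K_f<\infty$.
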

\begin{proof}
The isometric multiplier property $\|{\bf b}_\alpha h\|=\|h\|$ of the Blaschke factor reduces to the complex case as follows.
By Lemma \ref{L:j1}, any $h\in H^2(\mathbb B_1)$ can be represented as 
$h=h_1+h_2\beps$ with some $\beps\in\mathbb C_\alpha^\perp$ and $h_1,h_2\in H^2(\mathbb C_\alpha\cap\mathbb B_1)$.
By the definition \eqref{dop4} of the norm in $H^2(\mathbb B_1)$, it follows that 
$$
\|h\|^2_{H^2(\mathbb B_1)}=\|h_1\|^2_{H^2(\mathbb C_\alpha\cap\mathbb B_1)}+\|h_2\|^2_{H^2(\mathbb C_\alpha\cap\mathbb B_1)}.
$$
Since ${\bf b}_{\alpha}h_1$ and ${\bf b}_{\alpha}h_1$ also belong to $H^2(\mathbb C_\alpha\cap\mathbb B_1)$, we have,
by the isometric property of a complex Blaschke factor,
\begin{align*}
\|{\bf b}_{\alpha}h\|^2_{H^2(\mathbb B_1)}&=\|{\bf b}_{\alpha}h_1\|^2_{H^2(\mathbb C_\alpha\cap\mathbb B_1)}+
\|{\bf b}_{\alpha}h_2\|^2_{H^2(\mathbb C_\alpha\cap\mathbb B_1)}\\
&=\|h_1\|^2_{H^2(\mathbb C_\alpha\cap\mathbb B_1)}+\|h_2\|^2_{H^2(\mathbb C_\alpha\cap\mathbb B_1)}=\|h\|^2_{H^2(\mathbb B_1)}.
\end{align*}
The general case follows recursively: 
$$
\|{\bf b}_{\alpha_1}{\bf b}_{\alpha_2}\cdots {\bf b}_{\alpha_n}h\|=\|{\bf b}_{\alpha_2}\cdots {\bf b}_{\alpha_n}h\|=\ldots=
\|{\bf b}_{\alpha_n}h\|=\|h\|. 
$$
To complete the proof of the "only if" part we recall the $n$-elements Takenaka basis \eqref{dop5} for $(f\cdot H^2(\mathbb B_1))^\perp$.

\smallskip

For the "if" part, we start with the right $\mathbb H$-submodule $\mathcal N:=(f\cdot H^2(\mathbb B_1))^\perp$, which is necessarily invariant
under the backward-shift operator $R_0$ defined in \eqref{dop7a}. Choosing a (right) orthonormal basis of $\mathcal N$ we let $A$ and $C$ 
to denote the matrices of the operator $R_0$ and of the functional $E: \, h\to h_0$ with respect to this basis.  
Then the pair $(A^*,C^*)$ turns out to be controllable and satisfy $A^*A+C^*C=I_n$, while $\mathcal N$ is described by 
$$
\mathcal N=\bigg\{C(I-zA)^{-1}x=\sum_{k=0}^\infty (CA^kx) z^k: \, x\in\mathbb H^n\bigg\}.
$$
Extending the isometric matrix $\sbm{A \\ C}$ to a unitary matrix $\sbm{A & B \\ C& D}$, we next consider the power series 
$\Theta(z)=D+zC(I_n-zA)^{-1}B$. Since the pair $(A^*,C^*)$ is controllable, we conclude (by combining Lemma \ref{P:5.1} and Theorem \ref{L:feb2})
that $\Theta$ is a Blaschke product of degree $n$. Furthermore. it follows from \eqref{5.1} that 
\begin{equation}
P_{A^*,C^*}:=\sum_{k=0}^\infty A^{*k}C^*CA^{k}=I.
\label{6.0}
\end{equation}
We next compute the inner product \eqref{dop6} of a  
generic element $q_x(z)=C(I-zA)^{-1}x$ in $\mathcal N$ and $\Theta(z)z^m$ for some fixed $m\ge 0$:
\begin{equation}
\langle q_x, \, z^m\Theta \rangle_{H^2(\mathbb B_1)}=(D^*C+B^*
\bigg(\sum_{k=0}^\infty A^{*k}C^*CA^{k}\bigg)A)A^mx=0,
\label{6.0a}
\end{equation}
where we used \eqref{6.0} and equality $D^*C+B^*A=0$ for the last step. Since the multiplication 
operator $M_\Theta: h\to \Theta h$ is isometric on $H^2(\mathbb B_1)$, it follows from \eqref{6.0a} that
$\mathcal N^\perp$ is contained in the right $\mathbb H$-submodule $\Theta\cdot H^2(\mathbb B_1)$.
We next observe that any $h(z)=\sum_{k\ge 0}h_kz^k\in H^2(\mathbb B_1)$ can be decomposed as 
\begin{equation}
h(z)=C(I-zA)^{-1}u+\Theta(z)\cdot \widetilde{h}(z),
\label{6.0b}
\end{equation}
where $u\in\mathbb H^n$ and $\widetilde{h}\in \in H^2(\mathbb B_1)$ are given by  
$$
u=\sum_{k=0}^\infty A^{*k}C^*h_{k}\quad\mbox{and}\quad \widetilde{h}(z)=(M_\Theta^*h)(z)=
D^*h(z)+\sum_{k=0}^\infty B^*A^{*k}C^*(R_0^{k+1}h)(z)
$$
where $R_0$ is the backward shift operator \eqref{dop7a}. The convergence of the series representing $u$ follows 
by Cauchy inequality and \eqref{6.0}. Since we already know that $\mathcal N^\perp\subseteq \Theta\cdot H^2(\mathbb B_1)$, 
it now follows from \eqref{6.0b} that actually 
\begin{equation}
f\cdot H^2(\mathbb B_1)=\mathcal N^\perp=\Theta\cdot H^2(\mathbb B_1).
\label{6.0c}
\end{equation}
Due to the isometric multiplier property of both $f$ and $\Theta$, the orthogonal projection of $H^2(\mathbb B_1)$ onto 
$\mathcal N^\perp$ can be written in two ways:
$$
P_{\mathcal N^\perp}=M_\Theta M_\Theta^*=M_fM_f^*.
$$
Applying the latter equality to the constant ${\bf 1}$ gives the identity 
$\Theta(z)\overline{\Theta}_0=f(z)\overline{f}_0$. Due to \eqref{6.0c}, $f=\Theta g$ for some $g\in H^2(\mathbb B_1)$. Combining the two
latter equalities gives $\Theta(z)\overline{\Theta}_0=\Theta(z)g(z)\overline{g}_0\overline{\Theta}_0$, which implies 
$g(z)\overline{g}_0\equiv 1$. Therefore, $g$ is constant (say, $\phi$) and hence, $f=\Theta \phi$. By the isometric multiplier property.  
$1=\|f\cdot{\bf 1}\|=\|\Theta \phi {\bf 1}\|=|\phi|$. Thus, $|\phi|=1$ and hence $f=\Theta \phi$ is a Blaschke product of degree $n$.

\smallskip

If $\Theta_k$ is the first nonzero coefficient of $\Theta$ (i.e., $\Theta(z)=z^k\widetilde{\Theta}(z)$, where $\widetilde{\Theta}$ is a 
Blaschke product of degree $n-k$, then it follows from \eqref{6.0c} that $f(z)=z^n\widetilde{f}(z)$ for some isometric multiplier 
$\widetilde f$ and moreover, $\widetilde{f}\cdot H^2(\mathbb B_1)=\widetilde{\Theta}\cdot H^2(\mathbb B_1)$. By the case considered above,
$\widetilde{f}$ is equal to $\widetilde{\Theta}$ up to a right unimodular factor and hence, $f$ is a Blaschke product of degree $n$.\qed
\end{proof}
Let us now consider the space $H^2(\mathbb B_1)$ as the left Hilbert $\mathbb H$-module with inner product
$$
[ h, \, g]_{H^2(\mathbb B_1)}=\sum_{k=0}^\infty h_k \overline{g}_k,\quad\mbox{where}\quad h(z)=\sum_{k=0}^\infty h_kz^k, \; \;
g(z)=\sum_{k=0}^\infty g_kz^k.
$$
Then the following result (the counter-part of the characterization (4) in Theorem \ref{T:3.1})
can be easily derived from Theorem \ref{T:3.1c} by using power series conjugation \eqref{9}.
\begin{theorem}
A power series  $f\in \bH[[z]]$ is a Blaschke product of degree $n$ if and only if
$\|hf\|_{H^2(\mathbb B_1)}=\|h\|_{H^2(\mathbb B_1)}$ for all $h\in H^2(\mathbb B_1)$ 
and the left $\mathbb H$-submodule $\cdot H^2(\mathbb B_1) f$ of $H^2(\mathbb B_1)$ has codimension $n$.
\label{T:3.1d}
\end{theorem}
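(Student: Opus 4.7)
The plan is to deduce Theorem \ref{T:3.1d} from Theorem \ref{T:3.1c} via the anti-linear involution $f\mapsto f^{\sharp}$ of $\mathbb H[[z]]$ introduced in \eqref{9}. Three properties of this involution will do all the work: (a) it preserves the Hardy norm, $\|h\|_{H^2(\mathbb B_1)}=\|h^\sharp\|_{H^2(\mathbb B_1)}$, since it just conjugates each coefficient; (b) it reverses products, $(hf)^\sharp=f^\sharp h^\sharp$, by \eqref{2.10}; and (c) it carries Blaschke factors to Blaschke factors, ${\bf b}_\alpha^\sharp={\bf b}_{\overline\alpha}$, as recorded in Remark \ref{R:im4}. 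Combining (b) and (c), $f$ is a finite Blaschke product of degree $n$ if and only if $f^\sharp$ is.

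The first step is to translate the isometric left-multiplier property of $f$ into the isometric right-multiplier property of $f^\sharp$. Using (a) and (b),
$$
\|hf\|_{H^2(\mathbb B_1)}=\|(hf)^\sharp\|_{H^2(\mathbb B_1)}=\|f^\sharp h^\sharp\|_{H^2(\mathbb B_1)},
$$
and since $h\mapsto h^\sharp$ is a bijection of $H^2(\mathbb B_1)$, the condition $\|hf\|=\|h\|$ for all $h\in H^2(\mathbb B_1)$ is equivalent to $\|f^\sharp g\|=\|g\|$ for all $g\in H^2(\mathbb B_1)$.

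The second step is to match the codimensions of the relevant submodules. By (b) one has the identity of sets $(H^2(\mathbb B_1)f)^\sharp = f^\sharp \cdot H^2(\mathbb B_1)$. A direct unpacking of the definitions shows that the left and right inner products are intertwined by conjugation via $[h,g]_{H^2(\mathbb B_1)}=\overline{\langle h^\sharp,g^\sharp\rangle}_{H^2(\mathbb B_1)}$, and consequently $h^\sharp\perp_{\mathrm{left}} H^2(\mathbb B_1) f$ if and only if $h\perp_{\mathrm{right}} f^\sharp\cdot H^2(\mathbb B_1)$. A right-orthonormal basis of $(f^\sharp\cdot H^2(\mathbb B_1))^\perp$ is therefore carried by conjugation to a left-orthonormal basis of $(H^2(\mathbb B_1)f)^\perp$, so the two codimensions agree.

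Putting the two steps together, $f$ satisfies the hypotheses of Theorem \ref{T:3.1d} precisely when $f^\sharp$ satisfies those of Theorem \ref{T:3.1c}; the latter theorem then identifies $f^\sharp$ as a finite Blaschke product of degree $n$, and properties (b)--(c) return this conclusion to $f$. The only delicate point is the codimension-matching, since one is comparing a left-module codimension to a right-module codimension across the $\sharp$-involution; once the inner-product identity $[h,g]=\overline{\langle h^\sharp,g^\sharp\rangle}$ has been verified, the remainder of the transfer is entirely formal.
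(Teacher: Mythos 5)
Your proposal is correct and follows exactly the route the paper indicates: Theorem \ref{T:3.1d} is obtained from Theorem \ref{T:3.1c} via the conjugation $f\mapsto f^\sharp$ of \eqref{9}, using that $\sharp$ is a norm-preserving, product-reversing involution sending Blaschke products of degree $n$ to Blaschke products of degree $n$. Your verification of the inner-product identity $[h,g]=\overline{\langle h^\sharp,g^\sharp\rangle}$ and the resulting matching of left and right codimensions supplies precisely the details the paper leaves to the reader.
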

We finally characterize finite Blaschke products in terms of their coefficients. A necessary condition was pointed out
in part (5) of Theorem \ref{T:3.1}.
\begin{theorem}
A power series $f\in \bH[[z]]$ is a Blaschke product of degree $n$ if and only if the associated matrix ${\bf T}_{k}^f$ \eqref{1.4} is 
contractive for all $k\ge 1$ and 
\begin{equation}
\operatorname{rank} {\bf P}^f_k={\rm min} (k,n), \quad\mbox{where}\quad {\bf P}^f_k:=I_k-{\bf T}_{k}^f({\bf T}_{k}^{f})^*.
\label{6.1a}
\end{equation}
\label{T:3.1b}
\end{theorem}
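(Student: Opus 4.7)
For the \emph{only if} direction, assume $f$ is a Blaschke product of degree $n$. Then $f\in\mathcal S_{\mathbb H}$ by Proposition~\ref{P:8.1}, so every ${\bf T}_k^f$ is contractive by Theorem~\ref{T:3.1}. Via Theorem~\ref{L:feb2} (or Corollary~\ref{C:4.7}) fix a stable unitary realization $f(z)=D+zC(I-zA)^{-1}B$ with $U=\sbm{A & B \\ C & D}$ unitary, $A\in\mathbb H^{n\times n}$, $\sigma_{\bf r}(A)\subset\mathbb B_1$. Setting $\mathcal O_k:=\operatorname{col}(CA^j)_{j=0}^{k-1}\in\mathbb H^{k\times n}$, the key step is the identity
$$
I_k \;=\; {\bf T}_k^f({\bf T}_k^f)^* + \mathcal O_k\mathcal O_k^*.
$$
I would verify this entry-by-entry: substituting $f_0=D$ and $f_j=CA^{j-1}B$ into $({\bf T}_k^f({\bf T}_k^f)^*)_{ij}=\sum_{l=0}^{\min(i,j)}f_{i-l}\overline{f_{j-l}}$ and applying the unitarity relations $AA^*+BB^*=I$, $BD^*=-AC^*$, $CC^*+DD^*=1$ (which follow from $UU^*=I$), the sums telescope to $CA^iA^{*j}C^*=(\mathcal O_k\mathcal O_k^*)_{ij}$. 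Hence ${\bf P}_k^f=\mathcal O_k\mathcal O_k^*$, so $\operatorname{rank}{\bf P}_k^f=\operatorname{rank}\mathcal O_k$. Stability of $A$ combined with unitarity of $U$ gives, by Lemma~\ref{P:5.1}, controllability of $(A^*,C^*)$; equivalently $\mathcal O_n$ is invertible, its rows $C,CA,\ldots,CA^{n-1}$ are right-linearly independent in $\mathbb H^n$, and therefore $\operatorname{rank}\mathcal O_k=\min(k,n)$.

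For the \emph{if} direction, contractivity of all ${\bf T}_k^f$ places $f$ in $\mathcal S_{\mathbb H}$ by Theorem~\ref{T:3.1}. Identifying ${\bf T}_k^f$ with the matrix of the compression $P_{V_k}M_fP_{V_k}|_{V_k}$ of the multiplication operator on $H^2(\mathbb B_1)$, where $V_k=\operatorname{span}\{1,z,\ldots,z^{k-1}\}$, it follows that ${\bf P}_k^f$ is the compression of $I-M_fM_f^*$ to $V_k$. As $P_{V_k}\to I$ strongly on $H^2(\mathbb B_1)$ and operator rank is lower semi-continuous under SOT convergence (the standard Gram-matrix argument carries over verbatim to the right $\mathbb H$-module setting), one obtains $\operatorname{rank}(I-M_fM_f^*)\le n$; the reverse inequality follows from compressing to $V_n$, where the rank is already $n$. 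Hence $\operatorname{rank}(I-M_fM_f^*)=n$.

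To invoke Theorem~\ref{T:3.1c} and finish, one needs the stronger conclusion that $M_f$ is an isometric multiplier with $\dim(fH^2(\mathbb B_1))^\perp=n$. Writing $I-M_fM_f^* = P_{(fH^2)^\perp} + D$ with $D=P_{\overline{fH^2}}-M_fM_f^*\ge 0$ supported in $\overline{fH^2}$, the two summands have mutually orthogonal ranges, so $n=\dim(fH^2)^\perp+\operatorname{rank}D$, and the task reduces to showing $D=0$. This is the main obstacle. The natural plan is induction on $n$: the base case $n=0$ forces ${\bf T}_k^f({\bf T}_k^f)^*=I_k$ for all $k$, and a direct recursion on the diagonal entries yields $|f_0|=1$ and $f_j=0$ for $j\ge 1$, so $f$ is a unimodular constant; for $n\ge 1$, $\operatorname{rank}{\bf P}_1^f=1$ gives $|f_0|<1$, and a quaternionic Schur-algorithm step (in the spirit of the Schwarz--Pick reduction recalled in the proof of Theorem~\ref{T:1.1h}) extracts a left Blaschke factor, leaving a Schur power series whose Toeplitz rank data fits the hypotheses of the theorem at degree $n-1$.
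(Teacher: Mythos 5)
Your \emph{only if} direction is correct and is essentially identical to the paper's argument: fix a stable unitary realization, verify ${\bf P}^f_k=\mathcal O_k\mathcal O_k^*$ with $\mathcal O_k=\operatorname{col}(CA^j)_{j=0}^{k-1}$ by telescoping with the unitarity relations (this is exactly the paper's identity \eqref{6.1}), and use Lemma \ref{P:5.1} to get controllability of $(A^*,C^*)$ and hence $\operatorname{rank}\mathcal O_k=\min(k,n)$. No issues there.

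The \emph{if} direction, however, has a genuine gap, and you flag it yourself. Your first route reduces the problem to showing $D=P_{\overline{fH^2}}-M_fM_f^*=0$, i.e.\ that $M_f$ is an isometric multiplier; nothing in the rank hypothesis \eqref{6.1a} obviously delivers this, and you leave it open. Your fallback --- induction via a ``quaternionic Schur-algorithm step'' --- is only a plan, and as stated it conflates two different reductions: the Schwarz--Pick step recalled in the proof of Theorem \ref{T:1.1h} extracts a Blaschke factor at a \emph{zero} of $f$, which is unavailable when $f_0\neq 0$; what is actually needed is the coefficient-based Schur transform, and the substantive point --- that one such step carries the data $\operatorname{rank}{\bf P}^f_k=\min(k,n)$ into $\operatorname{rank}{\bf P}^{g}_k=\min(k,n-1)$ for the reduced series $g$ --- is precisely the noncommutative structured-extension computation you would have to supply. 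The paper closes this direction differently and non-inductively: Theorem \ref{T:7.1} exhibits an explicit degree-$n$ Blaschke product $G$ with the prescribed coefficients $f_0,\dots,f_n$, realized through the companion matrix $C_g=F-{\bf P}_n^{-1}{\bf T}_nX_n{\bf e}_n^*$, whose realization is shown to be unitary because the degeneracy $\operatorname{rank}{\bf P}_{n+1}=\operatorname{rank}{\bf P}_n$ makes the relevant Schur complement vanish (identities \eqref{7.9}); it then proves that under \eqref{6.1a} the Schur-class power series with these first $n+1$ coefficients is \emph{unique} (via positive semidefinite structured extensions of ${\bf P}_n$, reducing to parameters $h_j$ with $|h_0|=1$ and $h_j=0$ for $j\ge1$), so that $f=G$. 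Either you carry out the full Schur-algorithm induction with the rank bookkeeping, or you need this existence-plus-uniqueness argument; as written, your proposal establishes neither.
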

\begin{proof}
If $f$ is a Blaschke product of degree $n$, we can take it in the form \eqref{132} for some   
unitary matrix $\sbm{A & B \\ C & D}$ with $D\in\mathbb H$ and stable $A\in\mathbb H^{n\times n}$.
Then we have 
\begin{equation}
f_0=D\quad\mbox{and}\quad f_j=CA^{j-1}B\quad\mbox{for all}\quad j\ge 1.
\label{6.17}
\end{equation}
Substituting the latter expression into the formula \eqref{6.1a} for ${\bf P}^f_k$ leads
(upon simple manipulations based on equalities \eqref{ups1}) to the representation
\begin{equation}
{\bf P}^f_k=I_k-{\bf T}_{k}^f({\bf T}_{k}^{f})^*=\begin{bmatrix}C \\ CA \\ \vdots \\ CA^{k-1}\end{bmatrix}
\begin{bmatrix}C^* & A^*C^* & \ldots & A^{*(k-1)}C^*\end{bmatrix}.
\label{6.1}
\end{equation} 
Since the vectors $C^*, A^*C^*,\ldots,A^{*(n-1)}C^*\in\mathbb H^n$ are right linearly independent 
(since the pair $(A^*,C^*)$ is controllable, by Lemma \ref{P:5.1}), we have
$$
\operatorname{rank}{\bf P}^f_k=\operatorname{rank}\begin{bmatrix}C^* & A^*C^* & \ldots & A^{*(k-1)}C^*\end{bmatrix}={\rm min} (k,n).
$$
which completes the proof of the ``only if" part of the theorem. The ``if" part will proven below in a seemingly stronger form.\qed
\end{proof}
In what follows, we drop the superscript $f$ and consider the lower triangular Toeplitz matrix ${\bf T}_n$ and the associated 
matrix ${\bf P}_n=I_n-{\bf T}_n{\bf T}_n^*$ as structured matrices associated with a given finite sequence 
$(f_0,\ldots, f_{n-1})\in\mathbb H$.  The matrix ${\bf P}_{n+r}=I_n-{\bf T}_{n+r}{\bf T}_{n+r}^*$ associated with the extended 
sequence $(f_0,\ldots, f_{n+r-1})$ is called a {\em structured extension} of ${\bf P}_n$.
Writing 
\begin{equation}
{\bf T}_{n+1}=\begin{bmatrix} {\bf T}_n & 0 \\ X_n^* & f_0\end{bmatrix},\quad\mbox{where}\quad
X_n=\sbm{\overline{f}_n \\ \vdots \\ \overline{f}_1},
\label{7.5}
\end{equation}
we then have the block decomposition 
\begin{equation}
{\bf P}_{n+1}:=I_{n+1}-{\bf T}_{n+1}{\bf T}_{n+1}^*=
\begin{bmatrix} {\bf P}_n & -{\bf T}_n X_n\\
-X_n^*{\bf T}^*_n\quad & 1-|f_0|^2-X_n^*X_n\end{bmatrix}.
\label{7.5a}
\end{equation}
We recall the matrix $F$ given in \eqref{1.14eg} and the columns ${\bf e}_1,\ldots,{\bf e_n}$ of the matrix $I_n$.
\begin{theorem}
Given $f_0,\ldots,f_n\in\mathbb H$, let us assume that ${\bf P}_{n+1}\succeq 0$ and that
\begin{equation}
\operatorname{rank} {\bf P}_{n+1}=\operatorname{rank} {\bf P}_n=n.
\label{7.6}
\end{equation}
Then the power series 
\begin{equation}
G(z)=f_0+z{\bf e}_1^*\big(I_n-z \big(F^*-{\bf e}_n X_n^*{\bf T}_n^*{\bf P}_n^{-1}\big)\big)^{-1}Y_n,\quad\mbox{where}\quad
Y_n=\sbm{f_1 \\ f_2 \\ \vdots \\ f_n},
\label{7.7}
\end{equation}
is a Blaschke product of degree $n$ and is the unique Schur-class power series with the first $n+1$ coefficients equal to
$f_0,f_1,\ldots,f_n$.
\label{T:7.1}
\end{theorem}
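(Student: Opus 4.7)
The plan is to recognize the right side of \eqref{7.7} as a rational transfer function of the form $D+zC(I-zA)^{-1}B$ with $D=f_0$, $C={\bf e}_1^*$, $A=F^*-{\bf e}_n X_n^*{\bf T}_n^*{\bf P}_n^{-1}$, $B=Y_n$, and to show that after the state-space similarity $T={\bf P}_n^{1/2}$ this realization becomes a \emph{stable unitary} realization in the sense of Theorem \ref{L:feb2}. Corollary \ref{C:4.7} then yields that $G$ is a Blaschke product of degree $n$. Matching the first $n+1$ Taylor coefficients of $G$ with $f_0,\ldots,f_n$ is a direct induction: since $F^*{\bf e}_k={\bf e}_{k-1}$ for $k\ge 2$ and ${\bf e}_k^*{\bf e}_n=0$ for $k<n$, one has ${\bf e}_1^*A^{k-1}={\bf e}_k^*$ for $1\le k\le n$, and the coefficient of $z^k$ in \eqref{7.7} equals ${\bf e}_k^*Y_n=f_k$.

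The heart of the proof is the triple of identities ${\bf P}_n-A{\bf P}_nA^*=Y_nY_n^*$, $A{\bf P}_n{\bf e}_1=-Y_n\bar f_0$, and ${\bf e}_1^*{\bf P}_n{\bf e}_1=1-|f_0|^2$, which together are exactly the unitarity conditions for the $(n+1)\times(n+1)$ matrix $\sbm{T^{-1}AT & T^{-1}Y_n \\ {\bf e}_1^*T & f_0}$. The rank hypothesis \eqref{7.6} forces the Schur complement of ${\bf P}_n$ in \eqref{7.5a} to vanish, so the vector
\begin{equation*}
K:=\sbm{\eta \\ 1},\qquad \eta:={\bf P}_n^{-1}{\bf T}_n X_n,
\end{equation*}
spans $\ker{\bf P}_{n+1}$. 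Let $S,S':\mathbb H^n\to \mathbb H^{n+1}$ denote the embeddings $Sv=\sbm{0\\ v}$ and $S'v=\sbm{v\\ 0}$; the two possible block decompositions of ${\bf P}_{n+1}$ (both forms of \eqref{7.5a}) give ${\bf P}_n=S'^*{\bf P}_{n+1}S'$ and ${\bf P}_n-Y_nY_n^*=S^*{\bf P}_{n+1}S$. A direct inspection of the vector $Sv-Kv_n\in\mathbb H^{n+1}$ (whose last entry is $v_n-v_n=0$) establishes the key intertwining identity
\begin{equation*}
S'A^*v=Sv-Kv_n\qquad(v\in\mathbb H^n),\qquad\text{i.e.,}\qquad AS'^*=S^*-{\bf e}_nK^*.
\end{equation*}
Substituting into $A{\bf P}_nA^*=(AS'^*){\bf P}_{n+1}(S'A^*)$ and using the two killing relations ${\bf P}_{n+1}K=0=K^*{\bf P}_{n+1}$ annihilates all cross terms and leaves $S^*{\bf P}_{n+1}S={\bf P}_n-Y_nY_n^*$, which is the Stein equation. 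The identity $A{\bf P}_n {\bf e}_1=-Y_n\bar f_0$ follows from the same block calculation applied to the first column of $S'$, and ${\bf e}_1^*{\bf P}_n{\bf e}_1=1-|f_0|^2$ is the $(1,1)$ entry of \eqref{7.5a}.

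Stability of $A$ then follows from Lemma \ref{P:5.1}: the pair $(A^*,{\bf e}_1)$ is controllable because $A^{*(k-1)}{\bf e}_1={\bf e}_k$ for $k=1,\ldots,n$ (the adjoint form of the coefficient identity), and under the similarity $T$ the pair $(\tilde A^*,\tilde C^*)$ remains controllable, so Lemma \ref{P:5.1} together with the unitarity relation $\tilde A^*\tilde A+\tilde C^*\tilde C=I$ forces $\tilde A^*$, hence $A$, to be stable. Corollary \ref{C:4.7} now gives that $G$ is a Blaschke product of degree $n$. For uniqueness, suppose $\tilde f\in\mathcal S_{\mathbb H}$ has the same first $n+1$ coefficients as $G$. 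Then ${\bf P}_{n+1}^{\tilde f}={\bf P}_{n+1}$ has $K$ in its kernel, while ${\bf P}_{n+2}^{\tilde f}\succeq 0$. Positive semi-definiteness propagates the kernel: $\sbm{K\\ 0}\in \ker{\bf P}_{n+2}^{\tilde f}$, and reading off the last row produces a single linear equation that forces $\tilde f_{n+1}=G_{n+1}$; iterating yields $\tilde f=G$. The main obstacle is establishing the intertwining identity $S'A^*=S-K{\bf e}_n^*$, which reduces the otherwise laborious displacement-type verification of the Stein equation to a clean block-matrix manipulation inside $\mathbb H^{(n+1)\times(n+1)}$.
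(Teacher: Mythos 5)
Your existence argument is sound and follows the same skeleton as the paper's proof (identify \eqref{7.7} as a realization with $A=C_g^*$ for the companion matrix $C_g=F-{\bf P}_n^{-1}{\bf T}_nX_n{\bf e}_n^*$, verify the three identities \eqref{7.9}, pass to a unitary realization via $T={\bf P}_n^{1/2}$, and invoke controllability, Lemma \ref{P:5.1} and Theorem \ref{L:feb2}). Where you genuinely depart from the paper is in the verification of \eqref{7.9}: the paper grinds through the block identities \eqref{7.18}--\eqref{7.21}, whereas you encode the vanishing Schur complement as ${\bf P}_{n+1}K=0$ for $K=\sbm{\eta\\ 1}$ and exploit the intertwining $AS'^*=S^*-{\bf e}_nK^*$ together with the two block decompositions of ${\bf P}_{n+1}$. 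I checked this identity ($A^*v=Fv-\eta v_n$, and $Sv-Kv_n$ has last entry $v_n-v_n=0$), and it does reduce the Stein equation to a one-line cancellation; this is a cleaner route to the same identities.

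The uniqueness argument, however, has a gap. After propagating the kernel to get $\sbm{K\\ 0}\in\ker {\bf P}_{n+2}^{\tilde f}$, the last row yields $X_{n+1}^*u=0$ with $u:={\bf T}_{n+1}^*K$, and subtracting the same relation for $G$ gives $(\tilde f_{n+1}-G_{n+1})\,u_1=0$, where $u_1$ is the \emph{first} entry of $u$. This forces $\tilde f_{n+1}=G_{n+1}$ only if $u_1\neq 0$; if $u_1=0$ the equation is vacuous and nothing is determined. You never address this, and it is not obvious from the formula $u_1=\sum_{j=0}^{n}\overline{f}_jK_{j+1}$. The claim is true, but it needs a proof; one way is to use the unitary realization you have just constructed: by \eqref{6.1}, ${\bf P}_{n+1}=\mathcal O_{n+1}\mathcal O_{n+1}^*$ with $\mathcal O_{n+1}={\rm col}(C,CA,\ldots,CA^{n})$, so ${\bf P}_{n+1}K=0$ gives $\mathcal O_{n+1}^*K=0$, and writing $s=\sum_{j=2}^{n+1}A^{*(j-2)}C^*K_j$ one finds $\sbm{A^* & C^*\\ B^* & D^*}\sbm{s\\ K_1}=\sbm{0\\ u_1}$, whence $|u_1|^2=\|s\|^2+|K_1|^2$ by unitarity; $u_1=0$ would force $s=\mathcal O_n^*(K_2,\ldots,K_{n+1})^{T}=0$ and hence $K_{n+1}=0$ (as $\mathcal O_n$ is invertible because ${\rm rank}\,{\bf P}_n=n$), contradicting $K_{n+1}=1$. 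Since the leading coefficient at every subsequent step is the same $u_1$, this single nonvanishing repairs the whole induction. Note that the paper avoids the issue entirely by running the Schur algorithm on structured extensions (congruence of the Schur complement of ${\bf P}_n$ in ${\bf P}_{n+r}$ to a matrix ${\bf P}^h_r$ with $|h_0|=1$), which is a different mechanism from your kernel propagation.
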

\begin{proof}
We first note that the state space matrix in the realization \eqref{7.7} is the adjoint of the companion matrix 
\begin{equation}
C_{g}=F-{\bf P}_n^{-1}{\bf T}_nX_n{\bf e}_n^*=\begin{bmatrix}{\bf e}_2 & {\bf e}_3 & \ldots & {\bf e}_n & {\bf P}_n^{-1}{\bf T}_nX_n\end{bmatrix}
\label{7.8}
\end{equation}
of the polynomial $g(z)=z^n+\begin{bmatrix}1 & z & \ldots & z^{n-1}\end{bmatrix} {\bf P}_n^{-1}{\bf T}_nX_n$. It is readily seen from \eqref{7.8}
that $C_{g}{\bf e}_j={\bf e}_{j+1}$ for $j=1,\ldots,n-1$, from which we get recursively 
$$
C_{g}^j{\bf e}_1={\bf e}_{j+1}\quad\mbox{for}\quad j=1,\ldots,n-1.
$$
Taking the latter equalities into account, along with obvious equalities ${\bf e}_{j}*Y=f_j$,
we now have from \eqref{7.7} and \eqref{7.8}
\begin{align}
G(z)=f_0+\sum_{j=0}^\infty {\bf e}_1^*C_{g}^{*j}Y_n
&=f_0+\sum_{j=0}^{n-1}{\bf e}_{j+1}^*Y_nz^{j+1}+\sum_{j=n+1}^\infty {\bf e}_1^*C_{g}^{*j-1}Y_nz^j\notag  \\
&=f_0+f_1z+\ldots+f_nz^n+\sum_{j=n+1}^\infty {\bf e}_1^*C_{g}^{*j-1}Y_nz^j,\label{7.9a}
\end{align}
confirming that $G$ indeed has the desired coefficients. We next verify the equalities
\begin{equation}
 C_g^* {\bf P}_nC_g +Y_nY_n^*={\bf P}_n,\quad C_g^*{\bf P}_n{\bf e}_1+Y_n\overline{f}_0=0, \quad {\bf e}^*_1{\bf P}_n{\bf e}_1+|f_0|^2=1.
\label{7.9}
\end{equation}
Here we use the degeneracy condition \eqref{7.6} which implies that the Schur complement of the block ${\bf P}_n$
in \eqref{7.5a} equals zero:
$$
1-|f_0|^2-X_n^*X_n-X_n{\bf T}^*_n{\bf P}_n^{-1}{\bf T}_nX_n=0.
$$
We now use \eqref{7.8} and the last equality to compute
\begin{align}
C_g^* {\bf P}_nC_g&=\big(F^*-{\bf e}_n X_n^*{\bf T}_n^*{\bf P}_n^{-1}\big){\bf P}_n
\big(F-{\bf P}_n^{-1}{\bf T}_nX_n{\bf e}_n^*\big)\notag\\
&=F^*{\bf P}_nF-{\bf e}_n X_n^*{\bf T}_n^*F-F^*{\bf T}_nX_n{\bf e}_n^*+{\bf e}_n\big(1-|f_0|^2-X_n^*X_n\big){\bf e}_n^*. 
\label{7.18}
\end{align}
The rightmost term on the right side can be written in the block-matrix form as 
\begin{equation}
{\bf e}_n\big(1-|f_0|^2-X_n^*X_n\big){\bf e}_n^*=\begin{bmatrix}0 & 0 \\ 0 & \quad 1-|f_0|^2-X_{n-1}^*X_{n-1}-|f_n|^2\end{bmatrix}.
\label{7.19}
\end{equation}
We next observe block matrix representations (conformal with that in \eqref{7.19})
\begin{equation}
F^*{\bf P}_nF=\begin{bmatrix}{\bf P}_{n-1}-Y_{n-1}Y_{n-1}^* \; & 0 \\ 0 & 0\end{bmatrix}\quad\mbox{and}\quad
F^*{\bf T}_n=\begin{bmatrix}Y_{n-1} \;  & {\bf T}_{n-1} \\ 0 & 0\end{bmatrix},
\label{7.20}
\end{equation}
which follow from the explicit formulas for $F$, ${\bf P}_n$, ${\bf T}_n$. Then we also have from \eqref{7.5}
\begin{equation}
F^*{\bf T}_nX_n{\bf e}_n^*=\begin{bmatrix}Y_{n-1} & \; {\bf T}_{n-1} \\ 0 & 0\end{bmatrix}\begin{bmatrix}0 & \overline{f}_n  \\ 0 & X_{n-1}
\end{bmatrix}=\begin{bmatrix}0 & \; Y_{n-1}\overline{f}_n+ {\bf T}_{n-1}X_{n-1} \\ 0& 0 \end{bmatrix}.
\label{7.21}
\end{equation}
Substituting representations \eqref{7.19}-\eqref{7.21} into the right side of \eqref{7.18} leads us to 
\begin{align}
C_g^* {\bf P}_nC_g&=\begin{bmatrix}{\bf P}_{n-1}-Y_{n-1}Y_{n-1}^* & -Y_{n-1}\overline{f}_n- {\bf T}_{n-1}X_{n-1} \\ 
-f_n Y^*_{n-1}- X_{n-1}^*{\bf T}_{n-1}^* \; & 1-|f_0|^2-X_{n-1}^*X_{n-1}-|f_n|^2\end{bmatrix}\notag\\
&=\begin{bmatrix}{\bf P}_{n-1} & - {\bf T}_{n-1}X_{n-1} \\
- X_{n-1}^*{\bf T}_{n-1}^* \; & 1-|f_0|^2-X_{n-1}^*X_{n-1}\end{bmatrix}
-\begin{bmatrix}Y_{n-1} \\ f_n\end{bmatrix}\begin{bmatrix}Y^*_{n-1}\overline{f}_n\end{bmatrix}\notag\\
&= {\bf P}_n-Y_nY_{n}^*,\notag
\end{align}
which confirms the first equality in \eqref{7.9}. Observe from the second representation in \eqref{7.20} that 
$$
F^*{\bf T_n}{\bf e}_1+{\bf e}_n f_n=\begin{bmatrix}Y_{n-1} \\ f_n\end{bmatrix}=Y_n.
$$
Using the latter formula along with \eqref{7.8}, we verify the second equality in \eqref{7.9}: 
\begin{align*}
C_g^*{\bf P}_n{\bf e}_1&=F^*{\bf P}_n{\bf e}_1-{\bf e}_n X_n^*{\bf T}_n^*{\bf e}_1\\
&=F^*(I_n-{\bf T_n}{\bf T_n}^*){\bf e}_1-{\bf e}_n X_n^*{\bf e}_1\overline{f}_0\\
&=-F^*{\bf T_n}{\bf e}_1\overline{f}_0-{\bf e}_nf_n\overline{f}_0=-Y_n\overline{f}_0,
\end{align*}
The third equality in \eqref{7.9} is clear from \eqref{6.1} and \eqref{1.4}. 
Equalities \eqref{7.9} can be written in the matrix form as 
$$
\begin{bmatrix} C_g^* & Y \\ {\bf e}^*_1 & f_0 \end{bmatrix}\begin{bmatrix}{\bf P}_n & 0 \\ 0 & 1\end{bmatrix}
\begin{bmatrix} C_g &{\bf e}_1 \\ Y^* & \overline{f}_0 \end{bmatrix}=\begin{bmatrix}{\bf P}_n & 0 \\ 0 & 1\end{bmatrix}
$$
which tells us that the matrix 
$$
\begin{bmatrix} A & B \\ C & D\end{bmatrix}=\begin{bmatrix} {\bf P}_n^{-\frac{1}{2}}  & 0 \\ 0 & 1\end{bmatrix}
\begin{bmatrix} C_g^* & Y \\ {\bf e}^*_1 & f_0\end{bmatrix}
\begin{bmatrix} {\bf P}_n^{\frac{1}{2}}& 0 \\ 0 & 1\end{bmatrix}
$$ 
is unitary. Thus, he power series $G(z)$ admits a unitary realization \eqref{132} (which is similar to the realization \eqref{7.7}).
 Since the pair $(C_g,{\bf e}_1)$ is controllable, the similar pair $(A^*,C^*)$ is controllable as well.
Therefore $\sigma_{\bf r}(A)\subset\mathbb B_1$, by Lemma \ref{P:5.1}. Hence, $G$ is a Blaschke product of degree $n$, by Theorem \ref{L:feb2}. 

\smallskip

The uniqueness of a Schur-class power series subject to condition \eqref{7.9a} can be derived certain results on structured extensions 
of Hermitian matrices (see e.g. \cite[Section 2.1]{boloam}). Indeed, if $f\in\mathcal S_{\mathbb H}$ has prescribed
first $n+1$ coefficients $f_0,\ldots,f_n$, we can use further coefficients to get positive semidefinite structured extensions
${\bf P}_{n+r}=\sbm{{\bf P}_n & * \\ * &*}\succeq 0$ of the given ${\bf P}_{n}\succ 0$ for all $r\ge 1$. By mimicking complex-setting 
computations from \cite{boloam}, it follows that the Schur complement of ${\bf P}_n$ in ${\bf P}_{n+r}$ is congruent to the matrix ${\bf P}^h_{r}$ 
of the same structure, i.e., 
$$
{\bf P}^h_{r}=I_{r}-{\bf T}^h_{r}{\bf T}^{h*}_{r},
$$ 
but based on the sequence $\{h_0,\ldots,h_{r-1}\}$. The latter sequence in turn, uniquely recovers $f_n,\ldots,f_{n+r-1}$ for all $r\ge 1$. 
Furthermore, ${\bf P}_{n+1}$ is singular if and only if $|h_0|=1$. On the other hand,
${\bf P}_{n+r}\succeq 0$ for $r\ge 1$ if and only if ${\bf P}^h_{r}\succeq 0$. If $|h_0|=1$, the latter is possible only if $h_j=0$ for $j\ge 1$,
in which case ${\bf P}^h_{r}=0$ and
$$
\operatorname{rank} {\bf P}_{n+r}=\operatorname{rank} {\bf P}_{n}+\operatorname{rank} {\bf P}^h_{r}=\operatorname{rank} {\bf P}_{n}\quad
\mbox{for}\quad r\ge 1.
$$
Putting all pieces together we conclude that under assumptions \eqref{7.6}, for each $r\ge 2$, the positive semidefinite structured extension 
${\bf P}_{n+r}$ of ${\bf P}_{n+1}$ is unique and is necessarily based on the elements $f_{n+1}, f_{n+2}, ...$ corresponding to parameters $h_j=0$ 
for $j\ge 1$. The formula \eqref{7.9a} provides  a representation formula 
$$
f_{n+r}={\bf e}_1(\big(F^*-{\bf e}_n X_n^*{\bf T}_n^*)^{n+r-1}Y_n
$$
for these elements.\qed
\end{proof}
Now the "if" part in Theorem \ref{T:3.1b} follows immediately. Indeed, if ${\bf T}_k^f$ is a contraction for all $k\ge 1$, then $f\in\mathcal S_{\mathbb H}$, 
by Theorem \ref{T:3.1}. Due to condition \eqref{6.1a}, there is only one element in $S_{\mathbb H}$ with the first coefficients equal $f_0,\ldots,f_n$.
The Blaschke product $G$ of degree $n$ given in \eqref{7.7} is such an element. Therefore, $f=G$. 

\bibliographystyle{amsplain}

\end{document}